\providecommand{\ar}{\arrow}
\definecolor{blue}{rgb}{0, 0.2, 0.8}
\definecolor{red}{rgb}{0.8, 0, 0.2}
\definecolor{green}{rgb}{0, 0.7, 0.2}
\newlength{\ru} 
\tikzset{
	text diagram/.style = {
		text height = 1.5ex,
		text depth = 0.25ex,
	},
	trees/.style = {
		grow' = up,
		level distance = 5\ru,
		level 1/.style = {sibling distance = 7\ru},
		level 2/.style = {sibling distance = 5\ru},
		circle node/.style = {
			circle, fill, inner sep = 0.4\ru,
		},
		hollow circle node/.style = {
			circle, fill=none, draw, line width = 0.4\ru, inner sep = 0.3\ru,
		},
		triangle node/.style = {
			isosceles triangle,
			isosceles triangle apex angle=60,
			fill, draw=none,
			inner sep = 0.35\ru,
		},
		up triangle node/.style = {
			triangle node, rotate=90,
		},
		left triangle node/.style = {	
			triangle node, rotate=180,
		},
		right triangle node/.style = {	
			triangle node, rotate=0,
		},
		diamond node/.style = {
			diamond, fill, inner sep = 0.4\ru,
		},
		dot nodes/.style = {
			every node/.style = {circle node},
		},
		edge from parent/.style = {
			line width = rule_thickness,
			draw,
		},
		extension/.style = { 
			every node/.append style=##1,
			every child/.style=##1,
		},
		subtree/.style 2 args = { 
			every node/.style=##1,
			every child/.style=##1,
			every node/.append style=##2,
		},
		shift children/.style = {
			every child/.append style = {xshift=##1},
		},
	},
}
\setlist{itemsep = 0pt}
\setlist[enumerate, 1]{label=\upshape (\roman*), ref=(\roman*)}
\providecommand{\cal}{\mathcal}
\renewcommand{\Bbb}{\mathbb}
\newenvironment{pf}{\begin{proof}}{\end{proof}}
\newcommand{\Ef}{{\cal{F}}}
\newcommand{\Vee}{{\cal{V}}}
\newcommand{\Nat}{{\Bbb{N}}}
\newcommand{\Qyu}{{\Bbb{Q}}}
\newcommand{\al}{\alpha}
\newcommand{\eps}{\varepsilon}
\renewcommand{\phi}{\varphi}
\renewcommand{\rho}{\varrho}
\newcommand{\ntr}{{n\in\omega}}
\newcommand{\loe}{\leqslant}
\newcommand{\goe}{\geqslant}
\newcommand{\subs}{\subseteq}
\newcommand{\sups}{\supseteq}
\newcommand{\nnempty}{\ne\emptyset}
\renewcommand{\iff}{\,\Longleftrightarrow\,}
\newcommand{\letiff}{\,:\Longleftrightarrow\,}
\newcommand{\id}[1]{{\operatorname{i\!d}_{#1}}} 
\newcommand{\dom}{\operatorname{dom}}
\newcommand{\cod}{\operatorname{cod}}
\newcommand{\supp}{\operatorname{supp}}
\newcommand{\oraz}{\qquad\text{and}\qquad}
\newcommand{\meet}{\wedge}
\newcommand{\join}{\vee}
\newcommand{\by}[1]{/{#1}}
\newcommand{\set}[1]{\{#1\}}
\newcommand{\setof}[2]{\{#1\colon #2\}}
\newcommand{\bigsetof}[2]{\Bigl\{#1\colon #2\Bigr\}}
\newcommand{\sett}[2]{\{#1\}_{#2}}
\newcommand{\sn}[1]{\{#1\}} 
\newcommand{\dn}[2]{\{#1,#2\}} 
\newcommand{\map}[3]{#1\colon #2 \to #3} 
\newcommand{\img}[2]{#1[#2]} 
\newcommand{\fra}{Fra\"iss\'e}
\providecommand{\nat}{\omega}
\newcommand{\ciag}[1]{{\sett{{#1}_n}{\ntr}}}
\newcommand{\aut}{\operatorname{Aut}}
\newcommand{\fL}{{\mathfrak{L}}}
\newcommand{\fS}{{\mathfrak{S}}}
\newcommand{\fC}{{\mathfrak{C}}}
\newcommand{\cmp}{\circ} 
\newcommand{\aseparator}{\begin{center} \leafright \leafright \decotwo \leafleft \leafleft \end{center}}
\newcommand{\ob}[1]{\operatorname{Obj}(#1)}
\newcommand{\acts}{\curvearrowright}
\newcommand{\proto}[1]{{\mathbb S_\kappa}}
\newtheorem{tw}{Theorem}[section]
\newtheorem{twm}{Theorem}
\newtheorem{wn}[tw]{Corollary}
\newtheorem{lm}[tw]{Lemma}
\newtheorem{prop}[tw]{Proposition}
\theoremstyle{definition}
\newtheorem{df}[tw]{Definition}
\newtheorem{ex}[tw]{Example}
\newtheorem{question}[tw]{Question}
\newtheorem{observation}[tw]{Observation}
\newtheorem{remark}[tw]{Remark}
\newtheorem{con}[tw]{Construction}
\theoremstyle{remark}
\newcommand{\maps}{\colon} 
\newcommand{\restr}[1]{\mathop{\upharpoonright}{#1}} 
\newcommand{\card}[1]{\lvert #1\rvert} 
\newcommand{\idvec}[1]{\vec{\operatorname{i\!d}}_{#1}} 
\newcommand{\matches}{\vartriangleright} 
\newcommand{\vecinfty}[1]{\vec{#1}^{\,\infty}} 
\newcommand{\vecinv}[1]{\vec{#1}^{\,-1}} 
\newcommand{\am}{\operatorname{Am}} 
\newcommand{\arex}[1]{#1^\uparrow} 
\newcommand{\LO}{\mathsf{LO}} 
\newcommand{\aLO}{\mathsf{LO_3}} 
\newcommand{\tLO}{\mathsf{LO_{3R}}} 
\newcommand{\FinLO}{\mathsf{FinLO}} 
\newcommand{\FinaLO}{\mathsf{FinLO_3}} 
\newcommand{\FintLO}{\mathsf{FinLO_{3R}}} 
\renewcommand{\leq}{\leqslant}
\renewcommand{\geq}{\geqslant}
\renewcommand{\phi}{\varphi}
\renewcommand{\rho}{\varrho}
\title{\vspace{-2em}The weak Ramsey property and extreme amenability}
\author{Adam Bartoš \\
		\small \href{mailto:bartos@math.cas.cz}{\nolinkurl{bartos@math.cas.cz}} \\
		\small Institute of Mathematics, \\
		\small Czech Academy of Sciences, \\
		\small Žitná 25, 115 67 Prague, Czech Republic
	\and Tristan Bice \\
		\small \href{mailto:bice@math.cas.cz}{\nolinkurl{bice@math.cas.cz}} \\
		\small Institute of Mathematics, \\
		\small Czech Academy of Sciences, \\
		\small Žitná 25, 115 67 Prague, Czech Republic
	\and Keegan Dasilva Barbosa \\
		\small \href{mailto:keegan.dasilvabarbosa@mail.utoronto.ca}{\nolinkurl{keegan.dasilvabarbosa@mail.utoronto.ca}} \\
		\small Department of Mathematics, \\
		\small University of Toronto, \\
		\small 40 St. George Street, Toronto, Ontario, Canada M5S 2E4
	\and Wiesław Kubiś \\
		\small \href{mailto:kubis@math.cas.cz}{\nolinkurl{kubis@math.cas.cz}} \\
		\small Institute of Mathematics, \\
		\small Czech Academy of Sciences, \\
		\small Žitná 25, 115 67 Prague, Czech Republic
}
\date{\today\ \clocktime}
\begin{document}

\maketitle

\begin{abstract}
	We extend the Kechris--Pestov--Todorčević correspondence to weak \fra\ categories and automorphism groups of generic objects. The new ingredient is the weak Ramsey property.
	We demonstrate the theory on several examples including monoid categories, the category of almost linear orders, and categories of strong embeddings of trees.
	
	\emph{Keywords:} The weak Ramsey property, generic object, extreme amenability.
	
	\emph{2020 AMS Subject Classification:} 
		22F50, 
		05D10, 
		18A30, 
		54H11. 
\end{abstract}

\thanks{Research of the first, second and fourth author was supported by GA \v CR (Czech Science Foundation) grant EXPRO 20-31529X and RVO: 67985840. The third author was supported by the Ontario Graduate Scholarship.

We would like to thank the anonymous referee for carefully reading the manuscript and for their comments that helped to improve the paper.}

\tableofcontents

\section{Introduction}

The main motivation for this note is the seminal work of Kechris, Pestov, Todor\v cevi\'c~\cite{KPT} exhibiting the connection between extreme amenability of automorphism groups of countable homogeneous structures and Ramsey-type properties of their finite substructures.
This work has been very recently extended by Ma\v sulovi\'c~\cite{MasKPT} in the language of category theory. Our goal is to push it even further, namely, by making minimal assumptions both on the category of ``small'' structures and weakening the homogeneity of the ``generic'' object.

The phenomenon discovered by Kechris, Pestov, Todor\v cevi\'c~\cite{KPT} can be described in its simplest form as follows. We have a class $\Ef$ of finite structures of a fixed language. We assume the class has the subsequent nice properties: Every two structures in $\Ef$ can be embedded into a single one (the joint embedding property); every two extensions of a structure in $\Ef$ can be combined into a single one (the amalgamation property); every substructure of a structure in $\Ef$ is again in $\Ef$ (the class is hereditary); there are countably many isomorphic types in $\Ef$.
When these conditions are met, there exists a unique countable structure $U$ whose finite substructures are all in $\Ef$, such that every structure from $\Ef$ embeds into $U$, and $U$ is ultrahomogeneous with respect to $\Ef$, i.e. every isomorphism between finite substructures of $U$ extends to an automorphism of $U$. These are the foundational objects of study in Fra\"iss\'e theory.
Consider the group $G = \aut(U)$ with the pointwise convergence topology.
The \emph{KPT correspondence} states that the group $G$ is extremely amenable (i.e. every continuous action of $G$ on a compact Hausdorff space has a fixed point) if and only if the class $\Ef$ has the Ramsey property and the ordering property. The ordering property ensures that all structures in $\Ef$ are rigid (i.e. have trivial automorphism groups).
The Ramsey property is the structural variant of the classical Ramsey theorem, where we color structures of a fixed isomorphism type from $\Ef$ instead of coloring subsets.

For example, if $\Ef$ is the class of finite linearly ordered sets then the Ramsey property asserts in particular that, for every finite linearly ordered set $X$ and any positive integers $m$ and $k$, there exists a bigger finite linearly ordered set $Y$ such that when we color all copies of the $m$-element linear ordering inside $Y$ with at most $k$ many colors, we can always find an embedding $e$ of $X$ into $Y$ (namely, a subset with the same number of elements as $X$) such that all $m$-element subsets of $\img e X$ have the same color. 
This is equivalent to the classical finite Ramsey theorem.
Its formulation in the language of linear orderings has two advantages: First, it allows us to talk about embeddings instead of subsets, as the domain of an embedding can be identified with its image. Second, what is perhaps more important, it is purely category-theoretic.

The last observation leads to a natural idea: Replace the class of finite structures $\Ef$ by an abstract category $\fC$, whose arrows are meant to be some sort of ``embeddings''.
A natural assumption, made by Ma\v sulovi\'c~\cite{MasKPT} in this categorical framework, is that every object has only finitely many arrows going into it. Note that this is immediately true in the case that the category in question is a category of finite models with embeddings for arrows. As it so happens, this assumption, and the weaker assumption asserting that there are only finitely many arrows between two prescribed objects, is not necessary to capture the KPT correspondence in a categorical framework. We can do so by instead making the Ramsey property a bit more technical, involving only finite subsets of arrows rather than all arrows between two structures.

Nevertheless, it turns out that the KPT correspondence holds in a fairly large class of categories in which the notion of ``being finite'' is replaced by a factorization property with respect to a fixed sequence. Despite this subtle change, we still arrive to the same connection between extreme amenability and the Ramsey property.
In order to make the theory as general as possible, we shall work with so-called \emph{weak} \fra\ categories~\cite{KweakFra}, where the amalgamation property holds in a weaker form. We obtain the equivalence of extreme amenability of the automorphism group of the generic object with the weak version of the Ramsey property, which involves particular arrows (called \emph{amalgamable arrows}).
We also show that a weak \fra\ category $\fC$ gives rise to a natural \fra\ category $\am(\arex{\fC})$.
Moreover, we show that $\fC$ has the \emph{weak} Ramsey property if and only if $\am(\arex{\fC})$ has the Ramsey property.

Our main result can be roughly summarized as follows.

\begin{twm}
Assume $\fS$ is a weak \fra\ category and let $U$ be generic over $\fS$. The following properties are equivalent.
\begin{enumerate}[itemsep=0pt]
    \item[{\rm(a)}] $\aut(U)$ is extremely amenable.
    \item[{\rm(b)}] $\fS$ has the weak Ramsey property.
\end{enumerate}
\end{twm}

Of course, some minimal technical assumptions are needed here, so that there is a good interplay between the topology of $\aut(U)$ and the category $\fS$. In particular, $U$ is an object of a larger category.
Besides that, there are practically no further assumptions on $\fS$, however, the weak Ramsey property involves finite sets of arrows. The precise statement is Theorem~\ref{thm:KPT} below.

\aseparator

The paper is organized as follows:
After the Preliminaries section (where we introduce the setup) we prove the main results in a series of lemmas showing that the weak Ramsey property (see Definition~\ref{DFweakRmsyPy}) is equivalent to extreme amenability of the automorphism group of the generic object.
The last section contains a discussion of the main results and concrete applications.
This includes an analysis of monoid categories, almost linear orders, as well as finite trees under strong embeddings.
The latter exhibits an interesting interplay between Milliken's theorem for trees \cite{MilTrees} and the universal Ważewski dendrites \cite{Duchesne}, \cite{Kwiatkowska}.

\section{Preliminaries}

We shall use very basic concepts from category theory.
For undefined notions we refer to MacLane~\cite{MacLane}.
Categories will be denoted by $\fC$, $\fS$, $\fL$, etc.
A category $\fC$ will be identified with its class of arrows (morphisms) and $\ob{\fC}$ will denote its class of objects. Given two $\fC$-objects $a$, $b$ we denote by $\fC(a,b)$ the set of all arrows $f$ with domain $a$ (i.e., $\dom(f) = a$) and codomain $b$ (i.e., $\cod(f) = b$). We sometimes write $\map f a b$ instead of $f \in \fC(a,b)$, as long as the category $\fC$ is understood from the context.
Composition of arrows is performed in the usual order and denoted by $\cmp$, i.e. $\dom(f \cmp g) = \dom(g)$ and $\cod(f \cmp g) = \cod(f)$.
The identity arrow of an object $x$ will be denoted by $\id{x}$.
All our categories are supposed to be locally small, i.e. the class of all arrows from a fixed object $a$ to a fixed object $b$ is a set, not a proper class.

A subcategory $\fC \subs \fS$ is called \emph{full} if $\fC(a, b) = \fS(a, b)$ for every $a, b \in \ob{\fC}$, i.e. if we are restricting only objects, while $\fC \subs \fS$ is called \emph{wide} if $\ob{\fC} = \ob{\fS}$, i.e. if we are restricting only arrows.

A \emph{sequence} in a category $\fC$ is a covariant functor $\map{F}{\nat}{\fC}$, where the set of natural numbers $\nat$ is treated as a poset category. Namely, $F$ consists of a sequence of objects $\sett{F(n)}{\ntr}$
and a sequence of $\fC$-arrows $\sett{F(n,m)}{n \loe m < \nat}$ such that $F(n,m) \in \fC(F(n), F(m))$, $F(k,k) = \id{F(k)}$, and $F(k,m) = F(\ell, m) \cmp F(k,\ell)$ for every $k \loe \ell \loe m$.
We shall use the following convention: A sequence will be denoted by $\vec u$ (possibly with $u$ replaced by another letter) and in that case we denote $u_n = \vec u(n)$ and $u_n^m = \vec u(n,m)$.

A \emph{cone} for a sequence $\vec{u}$ in $\fC$ is a pair $(\vecinfty{u}, U)$ where $U$ is a fixed object and $\vecinfty{u}$ is a family of $\fC$-arrows $\set{u^\infty_n\maps u_n \to U}_{n \in \nat}$ such that $u_n^\infty = u_m^\infty \cmp u_n^m$ for every $n \leq m$, as in Figure~\ref{fig:coned_sequence}.
A cone $(\vecinfty{u}, U)$ for $\vec{u}$ is a \emph{colimit} of $\vec{u}$ if, for every cone $(\vecinfty{v}, V)$ for $\vec{u}$, there is a unique $\fC$-arrow $f\maps U \to V$ such that $v^\infty_n = f \cmp u^\infty_n$ for every $n \in \nat$.

\begin{figure}[htp]
	\centering
	\begin{tikzcd}
		& & U & & \\
		& & & & \\
		& & & & \\
		u_0 \arrow[r, "u_0^1"] \arrow[rruuu, "u_0^\infty", pos=0.3] 
			& u_1 \arrow[r, "u_1^2"] \arrow[ruuu, "u_1^\infty", pos=0.3] 
			& u_2 \arrow[r, "u_2^3"] \arrow[uuu, "u_2^\infty", pos=0.4] 
			& u_3 \arrow[r] \arrow[luuu, "u_3^\infty"] 
			& ... \arrow[lluuu]
	\end{tikzcd}
	\caption{A cone for a sequence in a category. The diagram is commutative.}
	\label{fig:coned_sequence}
\end{figure}
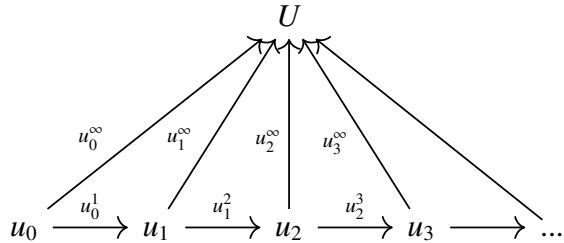

We adopt the standard convention and denote the automorphism group of an object $x$ by $\aut(x)$. Our main interest will be $\aut(U)$, where $U$ is a distinguished ``generic'' object (the precise meaning is described below). So, with one exception, the objects of categories will be denoted by small letters.

\subsection{The setup}

Throughout the paper we find ourselves in the following situation: we have a category of “small” objects $\fS$ and a fixed “large” object $U$, both living in an ambient category $\fL$.
The main theorems relate properties of $\fS$, like the (weak) Ramsey property, with properties of $U$, like extreme amenability of its automorphism group.
Note that (weak) Fraïssé theory follows the same pattern: properties of $\fS$, like the (weak) amalgamation property or existence of a (weak) Fraïssé sequence, are related to properties of the “generic” object $U$, like (weak) homogeneity and (weak) injectivity.

A connection between $\fS$ and $U$ in $\fL$ is established by fixing a \emph{coned sequence} in $(\fS, \fL)$: a triple $(\vec{u}, \vecinfty{u}, U)$ where $\vec{u}$ is a sequence in $\fS$ and $(\vecinfty{u}, U)$ is a cone for $\vec{u}$ in $\fL$.
The object $U$ can be thought of as a “quasi-limit” of $\vec{u}$ in $\fL$.
Sometimes it even is an actual colimit, but it is not necessary.

In this subsection we impose a minimalistic set of conditions on the coned sequence $(\vec{u}, \vecinfty{u}, U)$ for our theory to work.
A concise list is given in the following definition.
A rather long remark (that can be safely skipped) giving some insight follows.

\begin{df}
	Let $\fS \subs \fL$ be categories.
	We say that a coned sequence $(\vec{u}, \vecinfty{u}, U)$ in $(\fS, \fL)$ is \emph{matching} if it satisfies the following conditions.
	\begin{enumerate}
		\item[(F1)] For every $\fL$-arrow $\map f x U$ with $x \in \ob{\fS}$ there exist $n$ and an $\fS$-arrow $\map{\tilde{f}}{x}{u_n}$ such that $f = u_n^\infty \cmp \tilde{f}$.
		\item[(F2)] For every $n$ and every $\fS$-arrows $f, f'\maps x \to u_n$ such that $u^\infty_n \cmp f = u^\infty_n \cmp f'$ there is $n' \geq n$ such that $u^{n'}_n \cmp f = u^{n'}_n \cmp f'$.
		\item[(BF)] If $\ciag f$, $\ciag g$ are sequences of $\fS$-arrows such that $$\map{f_n}{u_{k_n}}{u_{\ell_n}}, \; \map{g_n}{u_{\ell_n}}{u_{k_{n+1}}}, \; g_n \cmp f_n = u_{k_n}^{k_{n+1}}, \; f_{n+1} \cmp g_n = u_{\ell_n}^{\ell_{n+1}},$$
		for some increasing cofinal sequences $\set{k_n}_{n \in \nat}, \set{\ell_n}_{n \in \nat} \subs \nat$,
		then there exists an $\fL$-arrow $f_\infty \in \aut(U)$ such that
		$$f_\infty \cmp u_{k_n}^\infty = u_{\ell_n}^\infty \cmp f_n \oraz f_\infty^{-1} \cmp u_{\ell_n}^\infty = u_{k_{n+1}}^\infty \cmp g_n$$
		for every $\ntr$.
		\item[(H)] For every $h \in \aut(U) \setminus \sn {\id U}$ there is $n$ such that $h \cmp u_n^\infty \ne u_n^\infty$.
	\end{enumerate}
	The letter F stands for ``factorization'', and (F1) and (F2) are called the ``factorization (existence) condition'' and the ``factorization uniqueness condition'', respectively.
	The letters BF stand for ``back-and-forth'' and H stands for ``Hausdorff'' (see Construction~\ref{aut_topology}).
\end{df}

\begin{remark} \label{RMmatching}
	Let us give some insight to the conditions defining a matching sequence.
	Given a category $\fS$, let us define the induced \emph{category of sequences} $\sigma_0\fS$.
	The objects are all sequences $\vec{u}$ in $\fS$.
	The morphisms are transformations between sequences modulo a certain equivalence.
	A transformation $\vec{\phi}\maps \vec{u} \to \vec{v}$ is a sequence of $\fS$-arrows $\set{\phi_n\maps u_n \to v_{\phi(n)}}_{n \in \nat}$, where $\set{\phi(n)}_{n \in \nat} \subs \nat$ is an increasing cofinal sequence, such that $v^{\phi(m)}_{\phi(n)} \cmp \phi_n = \phi_m \cmp u^m_n$ for every $n \leq m \in \nat$, as in Figure~\ref{fig:matching_morphisms}, i.e. it is a natural transformation from $\vec{u}$ to a subsequence of $\vec{v}$.
	The composition is obvious: $(\vec{\psi} \cmp \vec{\phi})_n = \psi_{\phi(n)} \cmp \phi_n$ for every $n \in \nat$.
	We say that two transformations $\vec{\phi}, \vec{\psi}\maps \vec{u} \to \vec{v}$ are equivalent (and we write $\vec{\phi} \approx \vec{\psi}$) if for every $n \in \nat$ there is $m \geq \phi(n), \psi(n)$ such that $v^m_{\phi(n)} \cmp \phi_n = v^m_{\psi(n)} \cmp \psi_n$.
	It is easy to see that this is a well-defined congruence of a category, and so defining morphisms of $\sigma_0\fS$ as transformations modulo this equivalence is correct.
	
	Note that we may identify every $\fS$-object $x$ with the constant sequence $\idvec{x}$ and every $\fS$-arrow $f\maps x \to y$ with the constant transformation $\idvec{f}\maps \idvec{x} \to \idvec{y}$.
	This way we identify $\fS$ with a subcategory of $\sigma_0\fS$.
	Moreover, this subcategory is full since every $\sigma_0\fS$-arrow $\vec{\phi}\maps x \to \vec{u}$ from a constant identity sequence is uniquely determined (as a transformation up to the equivalence) by the $\fS$-arrow $\phi_0\maps x \to u_{\phi(0)}$.
	Also note that every constant sequence $\idvec{x}$ admits the canonical limit cone $(\idvec{x}^\infty, x)$ in $\fL$ and that $(\idvec{x}, \idvec{x}^\infty, x)$ is a matching sequence in $(\fS, \fL)$.
	
	For two coned sequences $(\vec{u}, \vecinfty{u}, U)$ and $(\vec{v}, \vecinfty{v}, V)$ in $(\fS, \fL)$
	we consider the \emph{matching relation} $\matches$ between $\sigma_0\fS$-arrows $\vec{u} \to \vec{v}$ and $\fL$-arrows $U \to V$: we put 
		$$\vec{\phi} \matches \phi_\infty \quad\text{ if }\quad v^\infty_{\phi(n)} \cmp \phi_n = \phi_\infty \cmp u^\infty_n \text{ for every $n \in \nat$},$$
	see Figure~\ref{fig:matching_morphisms}.
	The matching relation is functorial: we have $\id{\vec{u}} \matches \id{U}$, and if $\vec{\phi} \matches \phi_\infty$ and $\vec{\psi} \matches \psi_\infty$, then $\vec{\psi} \cmp \vec{\phi} \matches \psi_\infty \cmp \phi_\infty$.
	
	\begin{figure}
	\centering
	
	\begin{tikzpicture}[
			x = {(4.5em, 0)},
			y = {(0, 5em)},
			text diagram,
			label/.style = {
				edge label = {#1}, 
				every node/.style = {node font=\footnotesize},
			},
			label'/.style = {
				label = {#1},
				swap,
			},
			line width = rule_thickness,
		]
		\path
			++(0, 0) node (u0) {$u_0$}
			++(1, 0) node (u1) {$u_1$}
			++(1, 0) node (u2) {$u_2$}
			++(1, 0) node (u3) {$u_3$}
			++(1, 0) node (udots) {$\cdots$}
			++(1, 0) node (U) {$U$}
		(0, 0)
			++(0, 1) node (v0) {$v_0$}
			++(1, 0) node (v1) {$v_1$}
			++(1, 0) node (v2) {$v_2$}
			++(1, 0) node (v3) {$v_3$}
			++(1, 0) node (vdots) {$\cdots$}
			++(1, 0) node (V) {$V$}
		(u0)
			++(-1, 0) node (useq) {$\vec{u}$}
			++(0, 1) node (vseq) {$\vec{v}$}
		;
		\graph{
			(u0) ->[label=$u_0^1$] (u1) ->[label=$u_1^2$] (u2) ->[label=$u_2^3$] (u3) -> (udots),
			(v0) ->[label'=$v_0^1$] (v1) ->[label'=$v_1^2$] (v2) ->[label'=$v_2^3$] (v3) -> (vdots),
			(u0) ->[label=$\phi_0$] (v0),
			(u1) ->[label=$\phi_1$, pos=0.33] (v2),
			(u2) ->[label=$\phi_2$, pos=0.35] (vdots),
			(u3) ->[label=$\phi_3$, pos=0.35, inner sep=0.4ex] (vdots),
			(useq) ->[label=$\vec{\phi}$] (vseq),
			(U) ->[label=$\phi_\infty$] (V),
		};
		
		\begin{scope}[
				label/.append style={inner sep=0.4ex},
			]
			\graph{
				(u0) ->[label'=$u_0^\infty$, pos=0.1, bend right] (U),
				(u1) ->[label'=$u_1^\infty$, pos=0.1, bend right] (U),
				(u2) ->[label'=$u_2^\infty$, pos=0.1, bend right] (U),
				(u3) ->[label'=$u_3^\infty$, pos=0.1, bend right] (U),
			};
			\graph{
				(v0) ->[label=$v_0^\infty$, pos=0.1, bend left] (V),
				(v1) ->[label=$v_1^\infty$, pos=0.1, bend left] (V),
				(v2) ->[label=$v_2^\infty$, pos=0.1, bend left] (V),
				(v3) ->[label=$v_3^\infty$, pos=0.1, bend left] (V),
			};
		\end{scope}
	\end{tikzpicture}
	
	\caption{A morphism between sequences and a matching morphism between associated cones.}
	\label{fig:matching_morphisms}
\end{figure}
	
	Now let $(\vec{u}, \vecinfty{u}, U)$ be a coned sequence in $(\fS, \fL)$.
	For every $x \in \ob{\fS}$ the matching relation for $(\idvec{x}, \idvec{x}^\infty, x)$ and $(\vec{u}, \vecinfty{u}, U)$ is a function $\sigma_0\fS(x, \vec{u}) \to \fL(x, U)$: every $\sigma_0\fS$-arrow $\phi\maps x \to \vec{u}$ is determined by an $\fS$-arrow $f\maps x \to u_n$ for some $n$, and the unique $\fL$-map $\phi_\infty$ such that $\vec{\phi} \matches \phi_\infty$ is $u^\infty_n \cmp f$.
	The condition (F1) says that this matching function is surjective, and the condition (F2) says that the matching function is one-to-one.
	Together, (F1) and (F2) hold if and only if for every $x \in \ob{\fS}$ the matching relation is a bijection between $\sigma_0\fS(x, \vec{u})$ and $\fL(x, U)$.
	
	Moreover, under (F1), for every $\fL$-arrow $\phi_\infty\maps U \to U$ there is a transformation $\vec{\phi}\maps \vec{u} \to \vec{u}$ such that $\vec{\phi} \matches \phi_\infty$: for every $n \in \nat$ there is an $\fS$-arrow $\phi_n\maps u_n \to u_{\phi(n)}$ with $u^\infty_{\phi(n)} \cmp \phi_n = \phi_\infty \cmp u^\infty_n$ and we can make sure that the sequence $\set{\phi(n)}_{n \in \nat}$ is increasing and cofinal.
	Under (F2), for every $\fL$-arrow $\phi_\infty\maps U \to U$ there is at most one transformation $\vec{\phi}\maps \vec{u} \to \vec{u}$ up to the equivalence such that $\vec{\phi} \matches \phi_\infty$: for a different such transformation $\vec{\psi}$ and $n \in \nat$ we have $u^\infty_{\phi(n)} \cmp \phi_n = \phi_{\infty} \cmp u^\infty_n = u^\infty_{\psi(n)} \cmp \psi_n$, and so by (F2) there is $m$ such that $u^m_{\phi(n)} \cmp \phi_n = u^m_{\psi(n)} \cmp \psi_n$.
	Together, the matching relation for $(\vec{u}, \vecinfty{u}, U)$ is a function $\fL(U, U) \to \sigma_0\fS(\vec{u}, \vec{u})$.
	By the functoriality the function is a monoid homomorphism, and it restricts to a group homomorphism $F\maps \aut_{\fL}(U) \to \aut_{\sigma_0\fS}(\vec{u})$.
	Observe that $\sigma_0\fS$-automorphisms $\vec{u} \to \vec{u}$ correspond to the zig-zag sequences used in condition (BF), and so (BF) states that the mapping $F$ is surjective (or even without (F1) and (F2) that for every automorphism $\vec{\phi}\maps \vec{u} \to \vec{u}$ there is a matching automorphism $\phi_\infty\maps U \to U$).
	Similarly, (H) states that the mapping $F$ is one-to-one (here we cannot omit (F2)).
	Together, under (F1) and (F2), the conditions (BF) and (H) hold if and only if the matching relation is a group isomorphism between $\aut_{\sigma_0\fS}(\vec{u})$ and $\aut_{\fL}(U)$.
\end{remark}

In the following lemma we observe that the notion of a matching sequence is robust under isomorphism, and that the large object $U$ determines the sequence of small objects $\vec{u}$ uniquely.
\begin{lm}
	Let $(\vec{u}, \vecinfty{u}, U)$ be a matching sequence in $(\fS, \fL)$.
	\begin{enumerate}
		\item If $f\maps U \to V$ is an $\fL$-isomorphism, then $(\vec{u}, f \cmp \vecinfty{u}, V)$ is a matching sequence.
		\item If $\vec{\phi}\maps \vec{v} \to \vec{u}$ is an $\sigma_0\fS$-isomorphism, then $(\vec{v}, \vecinfty{u} \cmp \vec\phi, U)$ is a matching sequence.
		\item For every other matching sequence $(\vec{v}, \vecinfty{v}, V)$ and every $\fL$-isomorphism $\phi_\infty\maps U \to V$ there is a matching $\sigma_0\fS$-isomorphism $\vec{\phi}\maps \vec{u} \to \vec{v}$.
	\end{enumerate}
	
	\begin{proof}
		The proof is straightforward, though technical.
		For example, to obtain (BF) in (ii), for an $\sigma_0\fS$-automorphism $\psi\maps \vec{v} \to \vec{v}$, we consider the automorphism $(\vec{\phi} \cmp \vec{\psi} \cmp \vecinv{\phi})\maps \vec{u} \to \vec{u}$ and its matching automorphism $h\maps U \to U$.
		For every $n \in \nat$ we have 
			$$u^\infty_{\phi(\psi(\phi^{-1}(n)))} \cmp (\vec\phi \cmp \vec\psi \cmp \vecinv{\phi})_n = h \cmp u^\infty_n.$$
		Also, $\vecinv{\phi} \cmp \vec{\phi}$ is equivalent to $\id{\vec{u}}$, and so we have
		\begin{align*}
			(\vecinfty{u} \cmp \vec\phi)_{\psi(n)} \cmp \psi_n
				&= (\vecinfty{u} \cmp \vec\phi \cmp \vec\psi \cmp \vecinv\phi \cmp \vec\phi)_n \\
				&= u^\infty_{\phi(\psi(\phi^{-1}(\phi(n))))} \cmp (\vec\phi \cmp \vec\psi \cmp \vecinv\phi)_{\phi(n)} \cmp \phi_n 
				= h \cmp u^\infty_{\phi(n)} \cmp \phi_n
				= h \cmp (\vecinfty{u} \cmp \vec\phi)_n,
		\end{align*}
		which we wanted.
		
		Claim (iii) in the case of automorphisms was already discussed in Remark~\ref{RMmatching}, and the proof for isomorphisms is analogous.
	\end{proof}
\end{lm}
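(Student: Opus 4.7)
The strategy is to verify the conditions (F1), (F2), (BF), (H) directly for each transported coned sequence, transporting statements along the $\fL$-isomorphism $f$ in (i), along the $\sigma_0\fS$-isomorphism $\vec\phi$ together with a chosen $\sigma_0\fS$-inverse $\vec\psi$ in (ii), and leveraging the bijectivity of the matching relation established in Remark~\ref{RMmatching} in (iii).

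\textbf{Part (i)} is the easiest: the new cone arrows are $f \cmp u_n^\infty$, and since $f$ is an $\fL$-isomorphism each condition transports by pre- or post-composition with $f^{\pm 1}$. For (F1), an arrow $g\maps x \to V$ yields $f^{-1} \cmp g\maps x \to U$, which factors through $\vec{u}$ by (F1) for the original sequence, and one post-composes with $f$. For (F2), pre-composition with $f^{-1}$ cancels it. For (BF), apply the original (BF) to get $h \in \aut(U)$ and take $f \cmp h \cmp f^{-1} \in \aut(V)$; both matching identities follow at once. For (H), conjugate an $h \in \aut(V) \setminus \set{\id V}$ by $f^{-1}$ and invoke (H) for $U$.

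\textbf{Part (ii)} requires more bookkeeping. Fix a $\sigma_0\fS$-inverse $\vec\psi\maps \vec{u} \to \vec{v}$ of $\vec\phi$, so both compositions $\vec\psi \cmp \vec\phi$ and $\vec\phi \cmp \vec\psi$ are equivalent to identity transformations. The general technique is: to factor an arrow through the new cone, first factor through the old cone at some index $m$ and post-compose with $\psi_m$, giving a factorization through $v_{\psi(m)}$; to compare two arrows into $\vec v$, post-compose with $\phi_n$ to translate the question to one about $\vec u$, apply the relevant property of $\vec u$, then pass to a sufficiently large index at which $\vec\psi \cmp \vec\phi \sim \id{\vec v}$ (or the other composition) becomes a genuine equality, collapsing the extra factors. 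This handles (F1), (F2), and (H). For (BF), follow the author's hint: a zig-zag for $\vec v$ packages a $\sigma_0\fS$-automorphism $\vec\alpha$ of $\vec v$; conjugate to $\vec\phi \cmp \vec\alpha \cmp \vec\psi\maps \vec u \to \vec u$, invoke (BF) for $\vec u$ to get a matching $h \in \aut(U)$, and verify via the same equivalence-collapsing trick that $h$ matches the original zig-zag for $\vec v$ via the new cone.

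\textbf{Part (iii)} reduces to functoriality of the matching relation together with an extension of the bijectivity observed in Remark~\ref{RMmatching}. The argument there that the matching relation is a bijection between $\fL(U,U)$ and $\sigma_0\fS(\vec u, \vec u)$ under (F1) and (F2) for $\vec u$ works verbatim, with the same steps, for arrows $U \to V$ and transformations $\vec u \to \vec v$, yielding a unique $\vec\phi \matches \phi_\infty$. Applied to $\phi_\infty^{-1}$ with the roles of the two sequences swapped, it yields $\vec\phi'\maps \vec v \to \vec u$ matching $\phi_\infty^{-1}$; by functoriality $\vec\phi' \cmp \vec\phi$ matches $\id U$, and by uniqueness it is equivalent to $\id{\vec u}$, and symmetrically on the other side, so $\vec\phi$ is a $\sigma_0\fS$-isomorphism. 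The \textbf{main obstacle} throughout is the index bookkeeping in part (ii), repeatedly passing to sufficiently large indices at which the various equivalences become genuine equalities; no new idea is needed beyond this, but the computations are notationally dense, as the author acknowledges.
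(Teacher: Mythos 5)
Your proposal is correct and takes essentially the same route as the paper's (deliberately terse) proof: direct verification of (F1), (F2), (BF), (H) by transporting along the given isomorphisms, with the same conjugation trick for (BF) in part (ii) — the paper's sample computation uses exactly the composite $\vec{\phi} \cmp \vec{\psi} \cmp \vecinv{\phi}$ you describe — and the same extension of the bijectivity of the matching relation from Remark~\ref{RMmatching} for part (iii). The only remaining work is the index bookkeeping you already identify, namely passing to indices where the equivalences $\vec{\psi} \cmp \vec{\phi} \approx \id{\vec{v}}$ and $\vec{\phi} \cmp \vec{\psi} \approx \id{\vec{u}}$ become genuine equalities.
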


It is not always the case that a sequence $\vec{u}$ can be completed to a matching sequence at most one way (up to an isomorphism).
However, it may have at most one colimit $(\vecinfty{u}, U)$ in $\fL$.
Moreover, if $(\vecinfty{u}, U)$ is a colimit of $\vec{u}$ in $\fL$, then $(\vec{u}, \vecinfty{u}, U)$ satisfies (BF) and (H).
In fact, for every transformation $\vec{\phi}\maps \vec{u} \to \vec{v}$ and every coned sequence $(\vec{v}, \vecinfty{v}, V)$ there is a unique matching $\fL$-arrow $\phi_\infty\maps U \to V$ since such matching arrow is the same thing as the colimit factorizing arrow for the cone $(\vecinfty{v} \cmp \vec{\phi}, V)$ for $\vec{u}$.

Let us continue discussing how to obtain the conditions defining a matching sequence.
Observe that if $\fL$ consists of monomorphisms, we obtain (F2) for free: if $u^\infty_n \cmp f = u^\infty_n \cmp f'$, then $f = f'$ since $u^\infty_n$ is a monomorphism.
Let us also recall the notion of \emph{finitely presentable} object \cite[Definition~1.1]{AdamRos} in a category $\fL$:
it is an object $x$ such that for every directed colimit $\vec{u} = (u^j_i, u^\infty_i, U)_{i \leq j \in I}$ every $\fL$-arrow $f\maps x \to U$ essentially uniquely factorizes through $\vec{u}$, i.e. there is an arrow $\tilde{f}\maps x \to u_i$ for some $i$ such that $u^\infty_i \cmp \tilde{f} = f$, and for every other such arrow $g\maps x \to u_j$ there is $k \geq i, j$ such that $u^k_i \cmp \tilde{f} = u^k_j \cmp g$.
In other words, a finitely presented object satisfies analogues of (F1) and (F2) for every directed system with a colimit.
Hence, if $\fS \subs \fL$ is a full subcategory whose objects are finitely presented in $\fL$, and $(\vec{u}, \vecinfty{u}, U)$ is a sequence in $\fS$ with a colimit in $\fL$, then it satisfies (F1) and (F2).

We summarize this discussion in the following lemma.

\begin{lm}
	Let $(\vec{u}, \vecinfty{u}, U)$ be a coned sequence in $(\fS, \fL)$.
	\begin{enumerate}
		\item If every map $u^\infty_n$ is a monomorphism, then $(\vec{u}, \vecinfty{u}, U)$ satisfies (F2).
		\item If $(\vecinfty{u}, U)$ is a colimit of $\vec{u}$ in $\fL$, then $(\vec{u}, \vecinfty{u}, U)$ satisfies (BF) and (H).
		\item If $\fS$ is a full subcategory of $\fL$, every $\fS$-object is finitely presentable in $\fL$, and $(\vecinfty{u}, U)$ is a colimit, then $(\vec{u}, \vecinfty{u}, U)$ satisfies also (F1) and (F2), and so is matching.
	\end{enumerate}
\end{lm}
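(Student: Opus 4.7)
The plan is to handle the three items in turn, reusing earlier observations from Remark~\ref{RMmatching}. For (i), the point is just that a monomorphism is left-cancellable: if every $u^\infty_n$ is mono, then $u^\infty_n \cmp f = u^\infty_n \cmp f'$ already forces $f = f'$, so (F2) is witnessed by $n' = n$.

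For (ii), the main work lies in (BF). Given the zig-zag data
$f_n\maps u_{k_n} \to u_{\ell_n}$ and $g_n\maps u_{\ell_n} \to u_{k_{n+1}}$ satisfying $g_n\cmp f_n = u^{k_{n+1}}_{k_n}$ and $f_{n+1}\cmp g_n = u^{\ell_{n+1}}_{\ell_n}$, I would first build a cone $(\vec c, U)$ for $\vec u$ whose component at each cofinal index is $c_{k_n} := u^\infty_{\ell_n}\cmp f_n$, and extend it to all indices $m$ by choosing $n$ with $k_n \geq m$ and setting $c_m := u^\infty_{\ell_n}\cmp f_n\cmp u^{k_n}_m$. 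The cocone identity at the cofinal levels is exactly the computation $u^\infty_{\ell_{n+1}}\cmp f_{n+1}\cmp u^{k_{n+1}}_{k_n} = u^\infty_{\ell_{n+1}}\cmp f_{n+1}\cmp g_n\cmp f_n = u^\infty_{\ell_n}\cmp f_n$, using both zig-zag equations, and this propagates to all indices. The colimit property then produces a unique $f_\infty\maps U\to U$ with $f_\infty\cmp u^\infty_{k_n} = u^\infty_{\ell_n}\cmp f_n$. Performing the symmetric construction with the $g_n$'s (using the sequence $\ell_n, k_{n+1}, \ell_{n+1}, k_{n+2}, \dots$) produces $g_\infty\maps U \to U$ with $g_\infty\cmp u^\infty_{\ell_n} = u^\infty_{k_{n+1}}\cmp g_n$. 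Finally I verify that $g_\infty\cmp f_\infty$ and $\id{U}$ are both factorizations of the canonical cone $\vecinfty{u}$ through itself (precomposing with each $u^\infty_{k_n}$ yields $u^\infty_{k_{n+1}}\cmp g_n\cmp f_n = u^\infty_{k_n}$), hence agree by the uniqueness part of the colimit property; likewise for $f_\infty\cmp g_\infty$. This gives $f_\infty\in\aut(U)$ with the required identities, so $g_\infty = f_\infty^{-1}$. For (H), the same uniqueness argument applies: if $h\cmp u^\infty_n = u^\infty_n$ for every $n$, then $h$ and $\id{U}$ are both factorizations of $\vecinfty{u}$ through itself, so $h = \id{U}$.

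For (iii), the plan is to read off (F1) and (F2) directly from the definition of finitely presentable objects, applied to the directed system $\vec u$ with colimit $(\vecinfty u, U)$: that definition provides, for any $\fL$-arrow $f\maps x \to U$ with $x\in\ob\fS$, an essentially unique factorization $\tilde f\maps x \to u_n$ in $\fL$, and fullness of $\fS$ inside $\fL$ guarantees $\tilde f \in \fS$. The ``essentially unique'' part is exactly (F2). The conditions (BF) and (H) then follow from (ii), since $(\vecinfty u, U)$ is assumed to be a colimit, and we obtain a matching sequence.

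The only non-routine step I expect is in (BF): setting up the cone carefully enough that its cocone identities follow cleanly from the zig-zag equations, and then packaging the ``inverse'' side symmetrically so that the uniqueness of colimit factorizations makes $f_\infty$ and $g_\infty$ mutually inverse. Once that bookkeeping is done, (H) and (iii) are short consequences.
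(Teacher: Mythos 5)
Your proposal is correct and follows essentially the same route as the paper: the paper justifies (ii) by noting that a matching arrow out of a colimit is precisely the colimit factorizing arrow for the cone $\vecinfty{u} \cmp \vec{\phi}$, with the zig-zag data $(f_n, g_n)$ playing the role of a $\sigma_0\fS$-automorphism and its inverse, and (i) and (iii) by the left-cancellability of monomorphisms and the definition of finite presentability together with fullness. Your write-up simply makes explicit the cone construction and the mutual-inverse verification that the paper leaves implicit.
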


A classical example of the described situation are categories of finitely generated $L$-structures in a first-order language $L$, with embeddings or one-to-one homomorphisms as arrows between objects.
Let $\fL$ be the category of all $L$-structures and homomorphisms.
Recall that a sequence $\vec{u}$ in $\fL$ consisting of embeddings can be without loss of generality viewed as an increasing $\subseteq$-chain of substructures.
Then its colimit in $\fL$ is the union of the chain.
In particular, the colimit maps $u^\infty_n$ can be taken to be inclusion maps.
So if we consider the wide subcategory $\fL' \subs \fL$ of all embeddings between $L$-structures, then $\fL$-colimits of $\fL'$-sequences (or of any directed systems in $\fL'$) are also $\fL'$-colimits.
Moreover, finitely generated $L$-structures are finitely presentable in $\fL'$: if an $L$-structure $A$ is a directed union of its substructures $(A_i \subs A)_{i \in I}$ and $B \subs A$ is a substructure generated by a finite set $F \subs A$, then every generator $x \in F$ is contained in some $A_{i_x}$, and so all of them are contained in some $A_i$.
Hence, $B \subs A_i$.
Together, we obtain the following.

\begin{con}[$\sigma$-closure] \label{sigma_closure}
	Let $L$ be a first-order language, let $\fL$ be the category of all $L$-structures and homomorphisms, and let $\fS \subs \fL$ be a subcategory of some finitely generated $L$-structures and all embeddings between them.
	We define $\sigma\fS$ to be the category of all $\fL$-structures that are $\fL$-colimits of $\fS$-sequences with all embeddings between them as morphisms.
	
	We have that every $\fS$-sequence $\vec{u}$ has a common colimit in $\sigma\fS$ and $\fL$, that every $\sigma\fS$-object $U$ is a colimit of a $\fS$-sequence, and that every such colimit sequence $(\vec{u}, \vecinfty{u}, U)$ in $(\fS, \sigma\fS)$ or equivalently in $(\fS, \fL)$ is matching.
	In fact, mapping every $\fS$-sequence to its colimit and every transformation of $\fS$-sequences to the unique matching $\fL$-arrow induces a functor $\sigma_0\fS \to \sigma\fS$, which is an equivalence of categories in this case. Hence, $\sigma\fS$ can be viewed as a concrete realization of $\sigma_0\fS$ in $\fL$.
	
	Note $\sigma\fS$-objects are exactly countably generated $L$-structures such that every finite subset is contained in a substructure from $\fS$.
	If $L$ is a relational language and $\ob{\fS}$ is hereditary, then $\sigma\fS$-objects are all countable $L$-structures whose finite substructures are in $\ob{\fS}$.
	If $\fS$ is the category of all finite groups, then $\sigma\fS$ is the category of all countable locally finite groups (meaning that every finite subset generates a finite subgroup).
	
	The construction of $\sigma\fS$ can be done also with one-to-one homomorphisms instead of embeddings.
	In that case the inclusion maps in the increasing chain may also refine the structure (make the relations finer).
\end{con}

\aseparator

\begin{con}[The topology of $\aut(U)$] \label{aut_topology}
	Let $(\vec{u}, \vecinfty{u}, U)$ be a matching sequence in $(\fS, \fL)$.
	The automorphism group $G := \aut(U)$ has a natural topology defined by the following neighborhood base of the identity:
	\[
		V_n = \bigsetof{g \in G}{g \cmp u_n^\infty = u_n^\infty}.
	\]
	The topology is Hausdorff, thanks to condition (H).
	Note that each $V_n$ is a subgroup of $G$, therefore we obtain a non-archimedean topological group.
	Note also that $G$ is completely metrizable and that it may not be separable, unless all the hom-sets $\fS(u_n,u_m)$ are countable (see the next lemma).
	
	To show that the open subgroups $V_n$ indeed form a base of the identity of the group topology, we need to show that for every $n \in \nat$ and $g \in G$ there is $m \in \nat$ such that $g^{-1} \cmp V_m \cmp g \subs V_n$, i.e. for every $h \in G$, if $h \cmp u_m^\infty = u_m^\infty$, then $h \cmp g \cmp u_n^\infty = g \cmp u_n^\infty$.
	By (F1), $g \cmp u_n^\infty = u^\infty_m \cmp f$ for some $m \in \nat$ and an $\fS$-arrow $f\maps u_n \to u_m$.
	Hence,
	\[
		h \cmp (g \cmp u_n^\infty) = (h \cmp u^\infty_m) \cmp f = u^\infty_m \cmp f = g \cmp u^\infty_n.
	\]
	Analogously, we can show that the topology on $G$ does not depend on the choice of a matching sequence for $U$:
	if $(\vec{v}, \vecinfty{v}, U)$ is another matching sequence, then the subgroups $V'_n = \set{g \in G: g \cmp v^\infty_n = v^\infty_n}$ induce the same topology.
	For every $n \in \nat$ we have $v^\infty_n = u^\infty_m \cmp f$ for some $m \in \nat$ and an $\fS$-arrow $f\maps v_n \to u_m$.
	Hence, $V_m \subs V'_n$.
	
	Note that in the case when $\fS$ is a category of finite first-order structures and all embeddings, and $(\vecinfty{u}, U)$ is a colimit, then $U$ is essentially the union of the chain of finite structures $u_n$, and the condition $g \cmp u^\infty_n = u^\infty_n$ says that the automorphism $g$ fixes the elements of $u_n$.
	Therefore, the induced topology is the topology of pointwise convergence inherited from $U^U$ where $U$ has the discrete topology.
\end{con}

\begin{lm}
	Let $(\vec{u}, \vecinfty{u}, U)$ be a matching sequence in $(\fS, \fL)$.
	\begin{enumerate}
		\item The topology on $\aut(U)$ is completely metrizable.
		\item If all the hom-sets $\fS(u_n, u_m)$ are countable, then $\aut(U)$ is also separable and so a Polish group.
	\end{enumerate}
	
	\begin{proof}
		The group $G := \aut(U)$ is metrizable by Birkhoff–Kakutani theorem~\cite[Theorem~3.3.12]{ArhangelskiiTkachenko} since we have a countable neighborhood base of the identity.
		We show that the two-sided uniformity of $G$ is induced by a complete metric.
		Let $\set{h_n}_{n \in \nat}$ be a Cauchy sequence with respect to the two-sided uniformity, i.e. for every $m \in \nat$ there is $\phi(m) \in \nat$ such that $h_{n'} \in (h_n \cmp V_m) \cap (V_m \cmp h_n)$ for every $n, n' \geq \phi(m)$, and so $h_n \cmp u^\infty_m = h_{n'} \cmp u^\infty_m$ and $h_n^{-1} \cmp u^\infty_m = h_{n'}^{-1} \cmp u^\infty_m$.
	
		Put $k_0 := 0$.
		By (F1) there is an $\fS$-arrow $f_0\maps u_{k_0} \to u_{\ell_0}$ for some $\ell_0 > k_0$ such that $u^\infty_{\ell_0} \cmp f_0 = h_{\phi(k_0)} \cmp u^\infty_{k_0}$.
		Then there is an $\fS$-arrow $g'_0\maps u_{\ell_0} \to u_{k'_1}$ for some $k'_1 > \ell_0$ such that $u^\infty_{k'_1} \cmp g'_0 = h_{\phi(\ell_0)}^{-1} \cmp u^\infty_{\ell_0}$.
		Together, we have
		\[
			u^\infty_{k'_1} \cmp g'_0 \cmp f_0 = h_{\phi(\ell_0)}^{-1} \cmp u^\infty_{\ell_0} \cmp f_0 = h_{\phi(\ell_0)}^{-1} \cmp h_{\phi(k_0)} \cmp u^\infty_{k_0} = u^\infty_{k_0}
		\]
		since $h_{\phi(k_0)} \cmp u^\infty_{k_0} = h_{\phi(\ell_0)} \cmp u^\infty_{k_0}$ since $\phi(\ell_0) \geq \phi(k_0)$.
		By (F2) there is $k_1 \geq k'_1$ such that $u^{k_1}_{k'_1} \cmp g'_0 \cmp f_0 = u^{k_1}_{k_0}$.
		We put $g_0 := u^{k_1}_{k'_1} \cmp g'_0$ so that $g_0 \cmp f_0 = u^{k_1}_{k_0}$.
		We continue this way and build $\fS$-sequences $\set{f_n}_{n \in \nat}$, $\set{g_n}_{n \in \nat}$ as in (BF).
		Hence, there is an automorphism $h_\infty \in G$ such that for every $m \in \nat$ we have
		\begin{align*}
			h_\infty \cmp u^\infty_{k_m} &= u^\infty_{\ell_m} \cmp f_m = h_{\phi(k_m)} \cmp u^\infty_{k_m} = h_n \cmp u^\infty_{k_m}
			 && \text{for every $n \geq \phi(k_m)$,} \\
			h_\infty^{-1} \cmp u^\infty_{\ell_m} &= u^\infty_{k_{m + 1}} \cmp g_m = h_{\phi(\ell_m)}^{-1} \cmp u^\infty_{\ell_m} = h_n^{-1} \cmp u^\infty_{\ell_m}
			 && \text{for every $n \geq \phi(\ell_m)$.}
		\end{align*}
		Since the sequences $\set{k_m}_{m \in \nat}$, $\set{\ell_m}_{m \in \nat}$ are strictly increasing, it follows that $\lim_{n \to \infty} h_n = h_\infty$.
		
		If the hom-sets $\fS(u_n, u_m)$ for $n \leq m$ are countable, then $G$ is separable since for every $n$, $\set{g \cmp V_n: g \in G}$ is a countable cover by basic open sets.
		This is because we have the one-to-one map $g \cmp V_n \mapsto g \cmp u^\infty_n$, and every arrow $g \cmp u^\infty_n$ is of the form $u^\infty_m \cmp f$ for some $f \in \fS(u_n, u_m)$.
	\end{proof}
\end{lm}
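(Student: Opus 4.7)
The plan is to establish metrizability immediately from the countable neighborhood base $\{V_n\}$ together with (H), to derive completeness by running a back-and-forth argument that invokes (F1), (F2), and (BF), and for (ii) to count cosets of the basic subgroups $V_n$ using (F1) again.

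For (i), the topology on $G := \aut(U)$ is Hausdorff by (H) and first-countable by construction, so Birkhoff--Kakutani supplies a compatible left-invariant metric; concretely one may take $d(g,h) = 2^{-n}$ where $n$ is least with $g^{-1}h \notin V_n$ or $h^{-1}g \notin V_n$. To verify completeness, let $\{h_n\}_{\ntr}$ be a two-sided Cauchy sequence. By thinning I may assume there is an increasing $N \maps \nat \to \nat$ with $h_n \cmp u_m^\infty = h_{N(m)} \cmp u_m^\infty$ and $h_n^{-1} \cmp u_m^\infty = h_{N(m)}^{-1} \cmp u_m^\infty$ whenever $n \geq N(m)$. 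Set $k_0 := 0$. Inductively, given $k_n$, apply (F1) to $h_{N(k_n)} \cmp u_{k_n}^\infty$ to obtain $f_n \maps u_{k_n} \to u_{\ell_n}$ (with $\ell_n > k_n$) such that $u_{\ell_n}^\infty \cmp f_n = h_{N(k_n)} \cmp u_{k_n}^\infty$; then apply (F1) to $h_{N(\ell_n)}^{-1} \cmp u_{\ell_n}^\infty$ to obtain $g_n' \maps u_{\ell_n} \to u_{k_{n+1}'}$. Since $N(\ell_n) \geq N(k_n)$, composing yields $u_{k_{n+1}'}^\infty \cmp g_n' \cmp f_n = u_{k_n}^\infty$, so (F2) produces $k_{n+1} \geq k_{n+1}'$ absorbing this into $g_n \cmp f_n = u_{k_n}^{k_{n+1}}$ for $g_n := u_{k_{n+1}}^{k_{n+1}'} \cmp g_n'$. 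A symmetric step at the next stage enforces $f_{n+1} \cmp g_n = u_{\ell_n}^{\ell_{n+1}}$. Now (BF) yields $h_\infty \in \aut(U)$ with $h_\infty \cmp u_{k_n}^\infty = u_{\ell_n}^\infty \cmp f_n = h_{N(k_n)} \cmp u_{k_n}^\infty$, and cofinality of $\{k_n\}$ together with the analogous identity for $h_\infty^{-1}$ delivers $h_n \to h_\infty$ in $G$.

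For (ii), every $\fL$-arrow $u_n \to U$ factors through some $u_m^\infty$ by (F1), so $|\fL(u_n, U)| \leq \sum_{m \geq n} |\fS(u_n, u_m)|$, which is countable under the hypothesis. The assignment $g V_n \mapsto g \cmp u_n^\infty$ is a well-defined injection $G/V_n \hookrightarrow \fL(u_n, U)$ (both well-definedness and injectivity are the definition of $V_n$), hence $[G : V_n]$ is countable for every $n$. Picking one representative from each coset of each $V_n$ produces a countable set $D \subs G$; it is dense because every basic neighborhood of $g \in G$ is of the form $g \cmp V_n$ and contains the chosen representative of the coset $g V_n$. Together with (i), $G$ is Polish.

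The main technical obstacle is the bookkeeping in the back-and-forth: one must simultaneously ensure that the index chains $\{k_n\}$ and $\{\ell_n\}$ are strictly increasing (hence cofinal in $\nat$), enforce both zig-zag equations $g_n \cmp f_n = u_{k_n}^{k_{n+1}}$ and $f_{n+1} \cmp g_n = u_{\ell_n}^{\ell_{n+1}}$ on the nose rather than merely up to post-composition, and check that the matching automorphism supplied by (BF) is the one capturing the Cauchy tail values $h_{N(k_n)}$. Once these alignments are in place, the convergence $h_n \to h_\infty$ is an immediate consequence of the identities guaranteed by (BF) together with cofinality.
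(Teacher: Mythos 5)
Your proposal is correct and follows essentially the same route as the paper: metrizability from the countable neighborhood base, completeness via the (F1)/(F2)/(BF) back-and-forth applied to the tail values of a Cauchy sequence, and separability by injecting $G/V_n$ into $\fL(u_n,U)$ and counting via factorization through the hom-sets $\fS(u_n,u_m)$. The only cosmetic difference is your explicit Birkhoff--Kakutani metric, which the paper leaves implicit.
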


\begin{con}[The topological group $G(\vec{u}, \fS)$] \label{con:Gus}
	We observe that given matching sequences $(\vec{u}, \vecinfty{u}, U)$ in $(\fS, \fL)$ and $(\vec{v}, \vecinfty{v}, V)$ in $(\fS, \fL')$, every isomorphism of sequences $\vec{\phi}\maps \vec{u} \to \vec{v}$ canonically induces an isomorphism of the topological groups $\aut_{\fL}(U)$ and $\aut_{\fL'}(V)$.
	Moreover, any sequence $\vec{u}$ in any category $\fS$ admits a canonical matching sequence $(\vec{u}, \set{\vec{\imath}_n}_{n \in \nat}, \vec{u})$ in $(\fS, \sigma_0\fS)$ (see Remark~\ref{RMmatching}).
	Therefore, up to a canonical isomorphism, every $\fS$-sequence $\vec{u}$ determines a topological group $G(\vec{u}, \fS)$ that is the automorphism group of every associated matching sequence $(\vec{u}, \vecinfty{u}, U)$ in every $(\fS, \fL)$.
	Moreover, isomorphic sequences determine isomorphic topological groups.
	
	\begin{proof}
		Recall that $\vec{u}$ is both a sequence in $\fS \subs \sigma_0\fS$ and an object in $\sigma_0\fS$.
		For every $n$ we let $\vec{\imath}_n\maps u_n \to \vec{u}$ be the $\sigma_0\fS$-arrow corresponding to $\id{u_n}\maps u_n \to u_n$,
			i.e. $(\vec{\imath}_n)_k = u^{\max(k, n)}_n$ for $k \in \omega$.
		It is easy to see that $\vec{u}$ (viewed as a $\sigma_0 \fS$-object) is the colimit of itself (viewed as an $\fS$-sequence).
		Also for every sequences $\vec{u}, \vec{v}$ the corresponding matching relation is the identity on $\sigma_0\fS(\vec{u}, \vec{v})$.
		Hence by Remark~\ref{RMmatching}, $(\vec{u}, \set{\vec{\imath}_n}_{n \in \nat}, \vec{u})$ is a colimiting matching sequence in $(\fS, \sigma_0\fS)$.
		
		For every matching sequence $(\vec{u}, \vecinfty{u}, U)$ in $(\fS, \fL)$ the matching relation between $\aut(\vec{u})$ and $\aut(U)$ is an isomorphism of groups by Remark~\ref{RMmatching}.
		Recall that we write $\vec{\phi} \approx \vec{\psi}$ for every two transformations between sequences that are equivalent, i.e. representing the same $\sigma_0\fS$-arrow.
		Since for every matching pair of automorphisms $\vec{\phi}\maps \vec{u} \to \vec{u}$ and $\phi_\infty\maps U \to U$ we have
		\[
			\phi_\infty \cmp u^\infty_n = u^\infty_n \iff u^\infty_{\phi(n)} \cmp \phi_n = u^\infty_n \iff \exists m\; u^m_{\phi(n)} \cmp \phi_n = u^m_n \iff \vec{\phi} \cmp \vec{\imath}_n \approx \vec{\imath}_n
		\]
		for every $n \in \omega$, it follows that matching relation is also a homeomorphism between the automorphism groups.
		To see $\vec{\phi} \cmp \vec{\imath}_n \approx \vec{\imath}_n$, let $k \in \omega$ and $k' := \max(k, n)$, and note that for every $m \geq \phi(k')$ we have $u^m_{\phi(k')} \cmp (\vec{\phi} \cmp \vec{\imath}_n)_k = u^m_{\phi(k')} \cmp (\phi_{k'} \cmp u^{k'}_n) = u^m_{\phi(n)} \cmp \phi_n$, while $u^m_{k'} \cmp (\vec{\imath}_n)_{k'} = u^m_{k'} \cmp u^{k'}_n = u^m_n$.
		
		Finally, the isomorphism $\vec{\phi}\maps \vec{u} \to \vec{v}$ of $\fS$-sequences induces an isomorphism $\vec{\psi} \mapsto \vec{\phi} \cmp \vec{\psi} \cmp \vecinv{\phi}$ between $\aut(\vec{u})$ and $\aut(\vec{v})$.
		This isomorphism is also a homeomorphism.
		Let $\set{\vec{\imath}_n}_{n \in \nat}$ and $\set{\vec{\jmath}_n}_{n \in \nat}$ be the cones of the canonical matching sequences for $\vec{u}$ and $\vec{v}$.
		For every $n \in \nat$ we put $m := (\vecinv{\phi})(n)$, and we have $\vec{\jmath}_{\phi(m)} \cmp \phi_m \cmp (\vecinv{\phi})_n \approx \vec{\jmath}_n$.
		Hence, if $\vec{\psi}$ fixes $\vec{\imath}_m$, then $\vec{\phi} \cmp \vec{\psi} \cmp \vecinv{\phi}$ fixes $\vec{\phi} \cmp \vec{\imath}_m \approx \vec{\jmath}_{\phi(m)} \cmp \phi_m$, and so it fixes also $\vec{\jmath}_{\phi(m)} \cmp \phi_m \cmp (\vecinv{\phi})_n \approx \vec{\jmath}_n$.
	\end{proof}
\end{con}

\begin{remark}
	As a final remark we observe that given a matching sequence $(\vec{u}, \vecinfty{u}, U)$ in $(\fS, \fL)$, it is sufficient for our considerations to use only the part $\fL' \subs \fL$ where $\ob{\fL'} = \ob{\fS} \cup \sn U$, $\fS$ is a full subcategory of $\fL'$, for every $x \in \ob{\fS}$ we have $\fL'(x, U) = \fL(x, U) = \bigcup_{n \in \nat} (u^\infty_n \cmp \fS(x, u_n))$, and $\fL'(U, U) = \aut_{\fL}(U)$.
	In the cases we are not interested in the generic object U, only in its automorphism group, we may ignore $\fL$ completely – it is enough to consider $\fS$, $\vec{u}$, and the topological group $G(\vec{u}, \fS)$.
	On the other hand, when it comes to applications, one usually has in mind a larger subcategory $\fL$ consisting of all colimits of sequences in $\fS$.
\end{remark}

\subsection{Weak Fraïssé theory}

In this section we summarize the definitions and theorems of the \emph{weak Fraïssé theory}~\cite{KweakFra} that will be used later in the paper.
We recall the notion of a \emph{weak Fraïssé category} which is a generalization of a \emph{Fraïssé category}~\cite{K40} (which is itself an abstraction of the classical notion of a \emph{Fraïssé class}~\cite[\S 7]{Hodges} of first-order structures).
A weak Fraïssé category $\fS$ can be characterized by existence of a \emph{weak Fraïssé sequence} $\vec{u}$.
First we need to recall the concepts of \emph{(weak) amalgamation} and \emph{(weak) domination}.

The weak amalgamation property was introduced by Ivanov~\cite{Ivanov} and later independently by Kechris and Rosendal~\cite{KechrisRosendal}, in connection with existence of generic automorphisms of homogeneous first-order structures.
Since then, ideas of weak Fraïssé theory have been developed and used in many works, see for example \cite{Kruckman}, \cite{DiLiberti}, \cite{KK}, \cite{KweakFra}, \cite{WeakEx}, \cite{PT22}, \cite{Malicki}.

\begin{df}[Amalgamable arrows]
	An arrow $\map e z {z'}$ is called \emph{amalgamable} if for every arrows $\map f {z'} x$, $\map g {z'} y$ there exist arrows $\map {f'} x w$, $\map {g'} y w$ satisfying
	$$f' \cmp f \cmp e = g' \cmp g \cmp e.$$
	An object $z$ is \emph{amalgamable} if $\id z$ is an amalgamable arrow.
	A category $\fS$ has the \emph{amalgamation property} if all identity arrows are amalgamable.
	A category with a weak \fra\ sequence has the \emph{weak amalgamation property}, namely, for every object $a$ there exists an amalgamable arrow with domain $a$.
\end{df}

\begin{df}[Weak Fraïssé sequence]
	A sequence $\vec{u}$ in a category $\fS$ is called a \emph{weak Fraïssé sequence} if is satisfies the following conditions.
	\begin{enumerate}
		\item[(W0)] (\emph{Cofinality}) For every $x \in \ob{\fS}$ there is $\ntr$ such that $\fS(x,u_n) \nnempty$.
		\item[(W1)]	(\emph{Weak absorption}) For every $n$ there is $m \goe n$ such that for every $\fS$-arrow $\map f {u_m} y$ there are $\ell > m$ and an $\fS$-arrow $\map g y {u_\ell}$ such that $g \cmp f \cmp u_n^m = u_n^\ell$.
	\end{enumerate}
	A weak Fraïssé sequence $\vec{u}$ is called \emph{normalized} if $m = n + 1$ works in condition (W1).
	It is called a \emph{\fra\ sequence}~\cite{K40} if one can always take $m = n$ in condition (W1).
	Note that every cofinal subsequence of a weak \fra\ sequence is weak \fra, therefore we may restrict attention to normalized weak \fra\ sequences.
\end{df}

\begin{remark}
	Note that a number $m \geq n$ works in (W1) if and only if the arrow $u^m_n$ is amalgamable.
	To prove that $u^m_n$ is amalgamable one can consider $\map{f_0}{u_m}{y_0}$, $\map{f_1}{u_m}{y_1}$, obtaining $\map{g_0}{y_0}{u_\ell}$, $\map{g_1}{y_1}{u_\ell}$ satisfying
		$$g_0 \cmp f_0 \cmp u_n^m = u_n^\ell = g_1 \cmp f_1 \cmp u_n^m.$$
	The other implication is essentially \cite[Lemma~3.10]{KweakFra}.
\end{remark}

\begin{remark} \label{RMseq}
	By \cite[Proposition~3.6 and Corollary~3.12]{KweakFra} a sequence isomorphic to a weak Fraïssé sequence is itself a weak Fraïssé sequence, and every two weak Fraïssé sequences are isomorphic.
\end{remark}

The next definition is analogous to the definition of weak Fraïssé sequence, but applies to subcategories.
In fact, both concepts could be unified by a notion of a \emph{weakly dominating functor}.

\begin{df}[Weakly dominating subcategory]
	A subcategory $\fC \subs \fS$ is called \emph{weakly dominating} if it satisfies the following conditions.
	\begin{enumerate}
		\item[(D0)] (\emph{Cofinality}) For every $x \in \ob{\fS}$ there is $y \in \ob{\fC}$ such that $\fS(x, y) \nnempty$.
		\item[(D1)]	(\emph{Weak absorption}) For every $x \in \ob{\fC}$ there is a $\fC$-arrow $e\maps x \to y$ such that for every $\fS$-arrow $f\maps y \to z$ there is an $\fS$-arrow $g\maps z \to w$ such that $g \cmp f \cmp e \in \fC$.
	\end{enumerate}
	$\fC$ is called \emph{dominating} if additionally $e = \id{x}$ works in (D1).
	Note that every full cofinal subcategory is dominating.
\end{df}

\begin{lm}[{\cite[Lemma~3.4]{KweakFra}}] \label{thm:fseq_in_dominating}
	Let $\fC \subs \fS$ be a weakly dominating subcategory.
	Then every weak Fraïssé sequence in $\fC$ is also a weak Fraïssé sequence in $\fS$.
\end{lm}

The following lemma, which we shall use later, was essentially proved in the proof of \cite[Proposition~2.7]{KweakFra}.
\begin{lm} \label{thm:amalgamable_arrow}
	Let $\fC \subs \fS$ be a dominating subcategory and let $e\maps z \to z'$ be a $\fC$-arrow.
	If $e$ is amalgamable in $\fC$, then $e$ is amalgamable in $\fS$.
	
	\begin{proof}
		Let $f\maps z' \to x$ and $g\maps z' \to y$ be $\fS$-maps.
		By domination there are $\fS$-arrows $f'\maps x \to x'$ and $g'\maps y \to y'$ such that $f' \cmp f,\, g' \cmp g \in \fC$.
		Hence, there are $\fC$-arrows $f''\maps x' \to w$ and $g''\maps y' \to w$ such that $(f'' \cmp f') \cmp f \cmp e = (g'' \cmp g') \cmp g \cmp e$.
	\end{proof}
\end{lm}

\begin{df}[Weak Fraïssé category]
	We say that $\fS$ is a \emph{weak Fraïssé category} if it is \emph{directed} (i.e. for every $x, y \in \ob{\fS}$ there is $z \in \ob{\fS}$ such that $\fS(x, z) \nnempty$ and $\fS(y, z) \nnempty$), has the weak amalgamation property, and is weakly dominated by a countable subcategory.
	
	The category $\fS$ is a \emph{Fraïssé category} if it additionally has the amalgamation property.
	In this case, it is even dominated by a countable subcategory.
\end{df}

In the classical case of categories of structures and embeddings, being directed is often called the \emph{joint embedding property}.
Note that a category is identified with its collection of morphisms, so a category is countable if it has countably many morphisms (as opposed to just having countably many objects).
However, for a category of finite structures, having countably many isomorphism types is sufficient for being dominated by a countable subcategory.

\begin{tw}
	The following conditions are equivalent for a category $\fS$.
	\begin{enumerate}
		\item $\fS$ is a weak Fraïssé category.
		\item $\fS$ is weakly dominated by a countable weak Fraïssé category.
		\item $\fS$ has a weak Fraïssé sequence.
	\end{enumerate}
\end{tw}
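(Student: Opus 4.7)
The plan is to establish the cycle (iii) $\Rightarrow$ (i) $\Rightarrow$ (ii) $\Rightarrow$ (iii), following the classical pattern of Fra\"iss\'e theory with the weakenings developed in \cite{KweakFra}.

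For (iii) $\Rightarrow$ (i), suppose $\vec{u}$ is a weak Fra\"iss\'e sequence in $\fS$. Directedness of $\fS$ follows from (W0): given $x, y \in \ob{\fS}$, pick $n$ with $\fS(x, u_n) \nnempty$ and $m$ with $\fS(y, u_m) \nnempty$ and post-compose with the bonding maps into $u_{\max(n,m)}$. For the weak amalgamation property, fix $x \in \ob{\fS}$, use (W0) to choose $h\maps x \to u_n$, and use (W1) together with the remark after it to pick $m \geq n$ with $u_n^m$ amalgamable; since amalgamability is preserved under precomposition, $u_n^m \cmp h$ is an amalgamable arrow out of $x$. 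Finally, the subcategory $\fC_0$ with objects $\set{u_n : \ntr}$ and arrows $\set{u_n^m : n \leq m}$ is countable and closed under composition, and it weakly dominates $\fS$: cofinality is immediate from (W0) of $\vec{u}$, while for $u_n \in \ob{\fC_0}$ the arrow $e = u_n^m$ witnesses absorption, since (W1) supplies for every $f\maps u_m \to z$ some $\ell$ and $g\maps z \to u_\ell$ with $g \cmp f \cmp e = u_n^\ell \in \fC_0$.

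For (i) $\Rightarrow$ (ii), begin with a countable weakly dominating subcategory $\fC_0 \subs \fS$, which exists by the definition of a weak Fra\"iss\'e category. Build a countable weak Fra\"iss\'e subcategory $\fC \supseteq \fC_0$ by a countable closure procedure: at each stage, enumerate pairs of $\fC$-arrows with common domain and adjoin weak amalgamations (provided by WAP of $\fS$), enumerate pairs of objects and adjoin joint upper bounds (provided by directedness of $\fS$), then close under composition. The union $\fC$ of these stages is countable, directed, has the weak amalgamation property, and is weakly dominated by itself; since $\fC_0 \subs \fC$, weak domination of $\fS$ by $\fC$ is inherited from $\fC_0$.

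For (ii) $\Rightarrow$ (iii), let $\fC$ be a countable weak Fra\"iss\'e category weakly dominating $\fS$. Build a sequence $\vec{u}$ inside $\fC$ by a Rasiowa--Sikorski enumeration: fix an enumeration of all pairs $(n, f)$ with $f$ a $\fC$-arrow out of an object scheduled to appear as some $u_n$, and at each stage extend $\vec{u}$ within $\fC$ by an amalgamable $\fC$-extension absorbing $f$, thereby witnessing (W1) for $\fC$-arrows. Cofinality (W0) in $\fS$ then follows from weak domination: any $x \in \ob{\fS}$ admits an $\fS$-arrow into some $y \in \ob{\fC}$, which in turn maps into $\vec{u}$. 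To promote (W1) of $\vec{u}$ from $\fC$-arrows to arbitrary $\fS$-arrows $f\maps u_m \to z$, use weak domination to find an absorbing $e\maps z \to z'$ with $z' \in \ob{\fC}$ such that $e \cmp f$ is a $\fC$-arrow (after possibly composing with a further absorbing step); applying the $\fC$-version of (W1) to $e \cmp f$ then furnishes the required $\ell$ and $\fC$-arrow $g\maps z' \to u_\ell$ whose precomposition with $e$ yields the $\fS$-arrow demanded by (W1). The principal obstacle is the bookkeeping in this last step: although $\fS$ may have uncountably many arrows out of each $u_m$, the countability of $\fC$ together with the reduction of $\fS$-arrows to $\fC$-arrows via weak domination makes the whole sequence manageable in countably many stages.
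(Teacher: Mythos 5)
The paper does not actually prove this theorem; it cites \cite[Theorem~4.6]{KweakFra} plus the closure fact stated immediately afterwards, so you are reconstructing that argument. Your cycle (iii)$\Rightarrow$(i)$\Rightarrow$(ii)$\Rightarrow$(iii) is the right architecture, and the implication (iii)$\Rightarrow$(i) is correct as written.

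The serious gap is in the promotion step of (ii)$\Rightarrow$(iii). You claim that, given an $\fS$-arrow $f\maps u_m \to z$, weak domination supplies an absorbing $e\maps z \to z'$ with $z' \in \ob{\fC}$ such that $e \cmp f$ is a $\fC$-arrow. That is not what weak domination gives: its conclusion is $g \cmp f' \cmp e \in \fC$, where the absorbing arrow $e$ is \emph{pre}-composed at a designated $\fC$-object and must be fixed \emph{before} the $\fS$-arrow $f'$ out of $\cod(e)$ is given. Post-composition alone does not push an arbitrary $\fS$-arrow out of a $\fC$-object into $\fC$ — that would be domination, not weak domination, and the whole point of the weak setting is that this fails. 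The correct argument has to weave the absorber into the bonding maps: given $n$, first take $m_1 \geq n$ witnessing (W1) of $\vec{u}$ in $\fC$; take the weak-domination absorber $e\maps u_{m_1} \to y_0$; absorb this $\fC$-arrow into the sequence to obtain $h\maps y_0 \to u_m$ with $h \cmp e \cmp u_n^{m_1} = u_n^{m}$, and declare \emph{this} $m$ to be the witness for (W1) in $\fS$. Only then, for an $\fS$-arrow $f\maps u_m \to z$, apply weak domination to $f \cmp h\maps y_0 \to z$ to get $g'\maps z \to w$ with $g' \cmp f \cmp h \cmp e \in \fC$; this $\fC$-arrow out of $u_{m_1}$ is absorbed over $u_n^{m_1}$ by a second application of (W1) in $\fC$, and unwinding the identity $h \cmp e \cmp u_n^{m_1} = u_n^m$ gives $g \cmp f \cmp u_n^m = u_n^\ell$. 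Your sketch attaches the absorber at the wrong end and chooses it after $f$, so as written the step does not go through.

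A smaller gap sits in (i)$\Rightarrow$(ii): after adjoining joint upper bounds and weak amalgamations, the enlarged $\fC$ acquires objects outside $\ob{\fC_0}$, and weak domination of $\fS$ by $\fC$ demands an absorbing arrow at \emph{every} object of $\fC$, which is not ``inherited from $\fC_0$'' for the new ones. Either adjoin, for each new object, an arrow back into an $\ob{\fC_0}$-object and compose with the $\fC_0$-absorber, or arrange the closure so that every adjoined arrow has codomain in $\ob{\fC_0}$ (possible because amalgamability is preserved by post-composition, by \cite[Lemma~3.2]{KweakFra}).
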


This is \cite[Theorem~3.7]{KweakFra} together with the fact (implicitly used in the proof) that every countable weakly dominating subcategory of a weak Fraïssé category can be extended by countably many arrows to become directed and to have the weak amalgamation property.

\begin{con}[the topological group $G(\fS)$] \label{con:GS}
	Let $\fS$ be a weak Fraïssé category.
	By Construction~\ref{con:Gus} and Remark~\ref{RMseq} we have that every two weak Fraïssé sequences $\vec{u}$, $\vec{v}$ are isomorphic, and so $G(\vec{u}, \fS)$ are $G(\vec{v}, \fS)$ are isomorphic topological groups.
	Hence, we may denote this topological group determined uniquely up to isomorphism by $G(\fS)$.
	Recall that it is isomorphic to $\aut(U)$ for every matching weak Fraïssé sequence $(\vec{u}, \vecinfty{u}, U)$ in every $(\fS, \fL)$.
\end{con}

Next we recall the key properties of a \emph{weak Fraïssé limit}, generalizing the extension property / injectivity and homogeneity from Fraïssé theory.
Again, see \cite{KweakFra} for details.

\begin{df}
	Let $\fS \subs \fL$ be categories.
	An $\fL$-object $U$ is 
	\begin{itemize}
		\item \emph{cofinal} in $(\fS, \fL)$ if $\fL(x, U) \nnempty$ for every $\fS$-object $x$,
		\item \emph{weakly injective} in $(\fS, \fL)$ if for every $\fL$-arrow $f\maps x \to U$ from an $\fS$-object there is an $\fS$-arrow $e\maps x \to x'$ such that for every $\fS$-arrow $g\maps x' \to y$ there is an $\fL$-arrow $h\maps y \to U$ such that $f = h \cmp g \cmp e$,
		\item \emph{weakly homogeneous} if $(\fS, \fL)$ if for every $\fL$-arrow $f\maps x \to U$ from and $\fS$-object there is an $\fS$-arrow $e\maps x \to x'$ and an $\fL$-arrow $f'\maps x' \to U$ with $f = f' \cmp e$ such that for every $\fL$-arrow $g\maps x' \to U$ there is $h \in \aut(U)$ such that $f = h \cmp g \cmp e$.
	\end{itemize}
	Note that every cofinal weakly homogeneous object is weakly injective, and that the arrow $e$ witnessing the weak homogeneity for $f$ has the property that for every $\fL$-arrows $g, g'\maps x' \to U$ there is $h \in \aut(U)$ such that $h \cmp g = g'$.
	We say that $U$ is \emph{homogeneous at $e$} in that case.
	So $U$ is weakly homogeneous if every $\fL$-arrow from an $\fS$-object to $U$ factorizes through an $\fS$-arrow $U$ is homogeneous at.
\end{df}

\begin{tw}[Characterization of the weak Fraïssé limit] \label{thm:Flim}
	For a matching sequence $(\vec{u}, \vecinfty{u}, U)$ in a pair of categories $(\fS, \fL)$, the following conditions are equivalent:
	\begin{enumerate}
		\item $\vec{u}$ is a weak Fraïssé sequence in $\fS$.
		\item $U$ is a cofinal and weakly injective object in $(\fS, \fL)$.
		\item $U$ is a cofinal and weakly homogeneous object in $(\fS, \fL)$.
	\end{enumerate}
	Moreover, given the conditions above hold, we have the following.
	\begin{enumerate}[label=\rm(\alph*)]
		\item There exists an $\fL$-arrow $X \to U$ for every $\fL$-object $X$ that is an $\fS$-object or is an $\fL$-colimit of a sequence of amalgamable arrows in $\fS$.
		\item $U$ is homogeneous at an $\fS$-arrow $e$ if and only if $e$ is amalgamable.
			Hence, $e$ works for $f\maps x \to U$ in the weak homogeneity of $U$ if and only if $e$ is amalgamable and $f$ factorizes through $e$.
		\item An $\fS$-arrow $e\maps x \to x'$ works for $f\maps x \to U$ in the weak injectivity of $U$ if and only if $e$ is amalgamable and $f$ factorizes through $e$.
		In particular, $\id{x}$ for an amalgamable object $x$ works for every arrow $f$.
	\end{enumerate}
\end{tw}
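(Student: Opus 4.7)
The plan is to establish the cyclic chain (i) $\Rightarrow$ (iii) $\Rightarrow$ (ii) $\Rightarrow$ (i). The middle implication (iii) $\Rightarrow$ (ii) is essentially the observation already made in the definition: if $U$ is cofinal and weakly homogeneous, then for any $\fS$-arrow $g\maps x' \to y$ one obtains an $\fL$-arrow $\phi\maps y \to U$ by cofinality, and weak homogeneity applied to $\phi \cmp g$ yields the lifting required for weak injectivity. Given the equivalence of (i)--(iii), the auxiliary claims (a)--(c) are then read off from the concrete witnesses used in the arguments.

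For (i) $\Rightarrow$ (iii), cofinality of $U$ is immediate from (W0) composed with the cone maps $u_n^\infty$. For weak homogeneity, I take an arbitrary $f\maps x \to U$, factor $f = u_n^\infty \cmp \tilde{f}$ via (F1), and apply (W1) to choose $m \geq n$ so that $u_n^m$ is amalgamable. Setting $e := u_n^m \cmp \tilde{f}$ and $f' := u_m^\infty$, the equality $f = f' \cmp e$ is automatic, and the task reduces to showing that for every $\fL$-arrow $g\maps u_m \to U$ there is $h \in \aut(U)$ with $h \cmp g \cmp u_n^m = u_n^\infty$. I produce such $h$ by a back-and-forth construction: I factor $g$ into the sequence via (F1), and then iterate (W1), alternating between absorbing the current arrow back into the canonical chain on each side, producing alternating $\fS$-arrows $f_i, g_i$ that satisfy the zig-zag composition identities of (BF). The automorphism supplied by (BF) is the required $h$.

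For (ii) $\Rightarrow$ (i), axiom (W0) follows from cofinality and (F1). For (W1), given $n$, I apply weak injectivity of $U$ to the cone arrow $u_n^\infty\maps u_n \to U$ to obtain an $\fS$-arrow $e\maps u_n \to x'$ with the lifting property. Using cofinality and (F1) I find an $\fS$-arrow $\psi\maps x' \to u_{m'}$ for some $m'$; the lifting property applied to $\psi$, together with (F1) and (F2) to align and identify the resulting arrows inside the chain, allows me to produce some $u_n^m$ (after extending the index) as a composition factoring through $e$. For any $\fS$-arrow $f\maps u_m \to y$, a further application of the lifting property combined with (F1)/(F2) cleanup then produces $\ell$ and $g\maps y \to u_\ell$ with $g \cmp f \cmp u_n^m = u_n^\ell$, giving (W1). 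The auxiliary claims (b) and (c) are now read off: in both the weak homogeneity and weak injectivity arguments, the successful $\fS$-arrow witness $e$ is precisely an amalgamable one, and the converse directions are immediate from the amalgamability condition combined with (F1)/(F2). Claim (a) is an extension of the back-and-forth argument, using (W1) iteratively to construct a compatible cone into $U$ from any $\fL$-colimit of amalgamable $\fS$-arrows.

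The main obstacle will be the back-and-forth construction in (i) $\Rightarrow$ (iii). The real technical work is ensuring that the alternating applications of (W1) genuinely produce a zig-zag satisfying \emph{both} composition identities $g_n \cmp f_n = u_{k_n}^{k_{n+1}}$ and $f_{n+1} \cmp g_n = u_{\ell_n}^{\ell_{n+1}}$ demanded by (BF); this forces repeated use of (F2) to upgrade ``equal after further composition along the sequence'' into honest equalities of $\fS$-arrows, and a careful interleaving of the indices $k_n, \ell_n$ so that both remain cofinal in $\nat$ and the amalgamability of $u_n^m$ is invoked at the correct stage.
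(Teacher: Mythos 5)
Your proposal is correct and follows essentially the route the paper intends: the paper does not write out the argument but defers to \cite{KweakFra}, noting only that the original proof uses nothing beyond the matching-sequence axioms, and your sketch is precisely that standard argument — cofinality plus the (BF)-powered back-and-forth for (i)$\Rightarrow$(iii), the trivial (iii)$\Rightarrow$(ii) via cofinality, and (F1)/(F2)-factorization of the weak-injectivity witnesses for (ii)$\Rightarrow$(i) — transplanted to the axioms of a matching sequence. You also correctly isolate the one genuinely delicate point, namely arranging each zig-zag arrow to pre-factor through an amalgamable bonding arrow $u_{k_i}^{k_i'}$ so that (W1) yields the exact composition identities that (BF) demands.
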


The theorem is essentially proved in \cite{KweakFra}: see Theorem~4.2, Corollary~4.5, Theorem~4.6, and Theorem~5.1.
However, we use weaker assumptions here – we neither assume that $(\vecinfty{u}, U)$ is a colimit of $\vec{u}$, nor that $\fS$-sequences have colimits in $\fL$, nor that $\fL$-arrows are monic.
The only assumptions that are used in the original proof are covered by the notion of a matching sequence.
Given all the necessary definitions and conditions, the proof is quite direct.

\begin{remark} \label{rm:Flim}
	In the case that $\fS$ is a category of some finitely generated first-order structures and all embeddings (or all one-to-one homomorphism) and $\fL = \sigma\fS$ (as defined in Construction~\ref{sigma_closure}), then $\fL$-objects are exactly colimits of $\fS$-sequences, and every such colimit sequence $(\vec{u}, \vecinfty{u}, U)$ in $(\fS, \fL)$ is matching, so Theorem~\ref{thm:Flim} applies.
	Moreover, the weak Fraïssé limit $U$ is unique up to isomorphism in $\fL$ in this case, and every $\fS$-sequence with colimit $U$ is weak Fraïssé.
	If additionally $\fS$ has the amalgamation property (and so is a Fraïssé category), there is an $\fL$-map $X \to U$ for every $\fL$-object $X$, i.e. the Fraïssé limit $U$ is cofinal in $\fL$.
\end{remark}

\section{Main results}

The purpose of this section is to prove the announced characterization of extreme ame\-na\-bi\-lity.
We first define the weak Ramsey property of $\fS$ (see Definition~\ref{DFweakRmsyPy} below) and prove that it is equivalent to a Ramsey-like property for $\fL$-arrows into $U$.
We also note that the weak Ramsey property implies the weak amalgamation property.
Next we show that these properties are equivalent to the extreme amenability of $\aut(U)$.
Together, the weak Ramsey property of a weak Fraïssé category $\fS$ is equivalent to the extreme amenability of $G(\fS)$.

\subsection{The weak Ramsey property}

For an arrow $\al\maps a \to a'$ and an object $b$ in a category $\fC$ we shall write $\fC(\al, b)$ as a shortcut for $\fC(a', b) \cmp \al$.
Below we introduce the main definition.

\begin{df} \label{DFweakRmsyPy}
	We say that a category $\fS$ has the \emph{weak Ramsey property} if for every $a \in \ob{\fS}$ there exists an $\fS$-arrow $\al\maps a \to a'$ satisfying:
	\begin{enumerate}
		\item[(wR)] For every $b \in \ob{\fS}$, for every $k \in \nat$, for every finite $F \subs \fS(\al,b)$ there is $v \in \ob{\fS}$ such that for every $\phi\maps \fS(\al,v) \to k$ there exists $e\maps b \to v$ such that $\phi$ is constant on $e \cmp F$.
	\end{enumerate}
	We call such $\al$ a \emph{Ramsey arrow}.
	We say that $\fS$ has the \emph{Ramsey property} if for every $a \in \ob{\fS}$ the identity $\id{a}$ is a Ramsey arrow.
	
	Recall that a category $\fS$ is \emph{locally finite} if $\fS(a,b)$ is finite for every $a,b \in \ob{\fS}$.
	Note that for a locally finite category $\fS$, the condition (wR) simplifies since we may consider just $F = \fS(\al, b)$.
	Hence, our notion of Ramsey property simplifies to the standard one (e.g. \cite[\S 3]{MasKPT}).
\end{df}

The following lemma generalizes \cite[Theorem~4.2 (i)]{Nesetril}.
\begin{lm} \label{LMerigoeg}
	Let $\fS$ be a directed category and let $\al\maps a \to a'$ be an $\fS$-arrow.
	If $\al$ is a Ramsey arrow, then $\al$ is amalgamable.
	Hence, a directed category with the weak Ramsey property has the weak amalgamation property.
	
	\begin{pf}
		Suppose $\al\maps a \to a'$ is a Ramsey arrow and let $k = 2$.
		Fix $f_0, f_1 \in \fS$ with $\dom(f_0) = a' = \dom(f_1)$.
		Using directedness, choose $b \in \ob{\fS}$ and $g_0, g_1 \in \fS$ such that $g_i \cmp f_i \in \fS(a',b)$ for $i=0,1$. Of course, $g_0,g_1$ are independent of $f_0,f_1$ and there is no reason for the equality $g_0 \cmp f_0 = g_1 \cmp f_1$. Let $F = \dn{g_0 \cmp f_0 \cmp \al}{g_1 \cmp f_1 \cmp \al}$.
		
		Now find $v \in \ob{\fS}$ from the weak Ramsey property applied to $F$.
		Define $\map{\phi}{\fS(\al,v)}{2}$ by setting $\phi(g) = 1$ if and only if $g = g' \cmp f_1 \cmp \al$ for some $g' \in \fS$.
		By (wR) there exists $\map e b v$ such that $\phi$ is constant on $e \cmp F$.
		Note that $\phi(e \cmp (g_1 \cmp f_1) \cmp \al) = 1$, by associativity.
		Thus also $\phi(e \cmp (g_0 \cmp f_0) \cmp \al) = 1$, which means that there exists $h$ such that
			$$e \cmp g_0 \cmp f_0 \cmp \al = h \cmp f_1 \cmp \al.$$
		We are done, because $e\cmp g_0$ and $h$ witness the weak amalgamation.
	\end{pf}
\end{lm}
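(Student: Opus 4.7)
The plan is to use the weak Ramsey property with just two colors, applied to a two-element subset $F \subs \fS(\al, b)$, together with a coloring that records whether an arrow factors through $f_1 \cmp \al$. The second assertion will then follow immediately from the first: in a directed category with the weak Ramsey property, every object $a$ admits a Ramsey arrow $\al\maps a \to a'$, which by the first assertion is amalgamable, yielding weak amalgamation.

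For the first assertion, given a Ramsey arrow $\al\maps a \to a'$ and arrows $f_0, f_1$ with $\dom(f_0) = a' = \dom(f_1)$, I would first use directedness to pull the two branches to a common codomain: choose $b \in \ob{\fS}$ and arrows $g_0, g_1$ with $g_i \cmp f_i \in \fS(a', b)$. Then the two-element set $F := \dn{g_0 \cmp f_0 \cmp \al}{g_1 \cmp f_1 \cmp \al}$ lies in $\fS(\al, b)$ and is the natural candidate to feed into the Ramsey property.

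Next, apply (wR) to $F$ with $k = 2$ to obtain an object $v$. The crucial step is to define the right coloring $\map \phi {\fS(\al, v)} 2$; the natural choice is $\phi(g) = 1$ precisely when $g = g' \cmp f_1 \cmp \al$ for some arrow $g'$. This is well defined (it is simply a property of the arrow $g$). The Ramsey property then produces $\map e b v$ such that $\phi$ is constant on $e \cmp F$. One of the two values in $e \cmp F$ is $e \cmp g_1 \cmp f_1 \cmp \al$, which by associativity is manifestly of the form $g' \cmp f_1 \cmp \al$, so $\phi$-takes the value $1$ there; by constancy $\phi(e \cmp g_0 \cmp f_0 \cmp \al) = 1$ as well, giving some $h$ with
$$e \cmp g_0 \cmp f_0 \cmp \al = h \cmp f_1 \cmp \al.$$
Hence $e \cmp g_0$ post-composed with $f_0$ and $h$ post-composed with $f_1$ agree after $\al$, which is precisely the amalgamability of $\al$ applied to $f_0, f_1$.

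I do not expect a real obstacle in this argument; the only point worth double-checking is that the two-valued coloring really is a function on $\fS(\al, v)$, which it is because the defining condition ``$g$ factors through $f_1 \cmp \al$'' is a property of the arrow $g$ and not of any particular presentation of it. The directedness hypothesis is used solely to produce the common codomain $b$ at the start, which is precisely the setup needed so that $F$ sits inside a single hom-set $\fS(\al, b)$ to which (wR) applies.
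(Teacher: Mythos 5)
Your argument is correct and follows essentially the same route as the paper's proof: directedness gives the common codomain $b$ and the two-element set $F$, and the coloring by ``factors through $f_1 \cmp \al$'' together with (wR) for $k=2$ yields the amalgamation. No issues.
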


Recall that if one of arrows $\al, \beta$ is amalgamable, then so is $\beta \cmp \al$, see \cite[Lemma~2.5]{KweakFra}.
The same composition behavior is true also for Ramsey arrows.

\begin{lm} \label{LMcomposition}
	Let $\al\maps a \to a'$ and $\beta\maps a' \to a''$ be arrows in a category $\fS$.
	If $\al$ or $\beta$ is Ramsey, then $\beta \cmp \al$ is Ramsey.
	
	\begin{proof}
		Suppose $\al$ is Ramsey, and let $b \in \ob{\fS}$, $k \in \nat$, and $F \subs \fS(\beta \cmp \al, b)$ finite.
		Since $\fS(\beta \cmp \al, b) \subs \fS(\al, b)$, we may take $v \in \ob{\fS}$ for $\al, b, k, F$.
		Let $\phi\maps \fS(\beta \cmp \al, v) \to k$ be a coloring.
		We extend it to $\phi'\maps \fS(\al, v) \to k$.
		By the choice of $v$ there is $e\maps b \to v$ such that $\phi'$ is constant on $e \cmp F$.
		But $e \cmp F \subs \fS(\beta \cmp \al, v)$, so we are done.
		
		Next suppose that $\beta$ is Ramsey.
		Again, let $b \in \ob{\fS}$, $k \in \nat$, and $F \subs \fS(\beta \cmp \al, b)$ finite.
		There is finite $F' \subs \fS(\beta, b)$ such that $F' \cmp \al = F$.
		Take $v \in \ob{\fS}$ for $\beta, b, k, F'$.
		Let $\phi\maps \fS(\beta \cmp \al, v) \to k$ be a coloring, and define $\phi'\maps \fS(\beta, v) \to k$ by $\phi'(\xi) := \phi(\xi \cmp \al)$.
		By the choice of $v$ there is $e\maps b \to v$ such that $\phi'$ is constant on $e \cmp F'$.
		Hence, $\phi$ is constant on $e \cmp F' \cmp \al = e \cmp F$.
	\end{proof}
\end{lm}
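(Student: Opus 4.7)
The plan is to split into two cases according to whether $\alpha$ or $\beta$ is Ramsey, and to exploit in each case the relationship between the hom-sets $\fS(\alpha, b)$, $\fS(\beta, b)$, and $\fS(\beta \cmp \alpha, b)$. The two key observations are that $\fS(\beta \cmp \alpha, b) \subs \fS(\alpha, b)$, since $h \cmp \beta \cmp \alpha = (h \cmp \beta) \cmp \alpha$, and dually that $\fS(\beta \cmp \alpha, b) = \fS(\beta, b) \cmp \alpha$. Each of these identifications will drive one of the cases.

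First I would handle the case where $\alpha$ is Ramsey. Given $b \in \ob{\fS}$, an integer $k$, and a finite $F \subs \fS(\beta \cmp \alpha, b)$, I view $F$ as a finite subset of the larger set $\fS(\alpha, b)$ and apply the weak Ramsey property of $\alpha$ to obtain a witness $v$. For any coloring $\phi \maps \fS(\beta \cmp \alpha, v) \to k$, I extend it arbitrarily to $\bar\phi \maps \fS(\alpha, v) \to k$ and obtain $e \maps b \to v$ on which $\bar\phi$ is constant on $e \cmp F$; since $e \cmp F \subs \fS(\beta \cmp \alpha, v)$, the original $\phi$ is also constant there.

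For the case where $\beta$ is Ramsey, I would pick, for each element $h \cmp \beta \cmp \alpha$ of $F$, the corresponding $h \cmp \beta \in \fS(\beta, b)$, assembling these into a finite set $F' \subs \fS(\beta, b)$ with $F = F' \cmp \alpha$. Applying the weak Ramsey property of $\beta$ to $F'$ yields $v$. Any coloring $\phi \maps \fS(\beta \cmp \alpha, v) \to k$ pulls back via $\xi \mapsto \xi \cmp \alpha$ to a coloring $\phi' \maps \fS(\beta, v) \to k$ (well-defined because $\xi \cmp \alpha \in \fS(\beta \cmp \alpha, v)$), and the Ramsey property of $\beta$ provides $e \maps b \to v$ rendering $\phi'$ constant on $e \cmp F'$, whence $\phi$ is constant on $e \cmp F' \cmp \alpha = e \cmp F$.

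I do not anticipate a serious obstacle; the lemma is essentially a bookkeeping exercise around the shortcut notation $\fS(\alpha, b) = \fS(a', b) \cmp \alpha$. The only point requiring genuine care is choosing the correct direction of reduction in each case: when $\alpha$ is Ramsey one \emph{extends} the given coloring from $\fS(\beta \cmp \alpha, v)$ to $\fS(\alpha, v)$, while when $\beta$ is Ramsey one \emph{pulls back} the coloring through post-composition with $\alpha$.
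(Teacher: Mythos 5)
Your proposal is correct and follows the paper's own proof essentially verbatim: in the case where $\al$ is Ramsey you enlarge $F$ into $\fS(\al,b)$ and extend the coloring, and in the case where $\beta$ is Ramsey you lift $F$ to $F'\subs\fS(\beta,b)$ and pull the coloring back along precomposition with $\al$. No gaps.
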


\begin{lm} \label{LMhereditary}
	Let $\al\maps a \to a'$ and $\beta\maps a' \to a''$ be arrows in a category $\fS$.
	If $\al$ is amalgamable and $\beta \cmp \al$ is Ramsey, then $\al$ is Ramsey.
	
	\begin{proof}
		Let $b \in \ob{\fS}$, $k \in \nat$, and $F \subs \fS(\al, b)$ finite.
		First we prove that there is $b' \in \ob{\fS}$ and $g\maps b \to b'$ such that $F' := g \cmp F \subs \fS(\beta \cmp \al, b')$.
		Enumerate $F$ as $\set{f_i: i < n}$.
		Since $\al$ is amalgamable, there are $\fS$-arrows $g_0$ and $h_0$ such that $g_0 \cmp f_0 = h_0 \cmp \beta \cmp \al$.
		Then there are $g_1$ and $h_1$ such that $g_1 \cmp (g_0 \cmp f_1) = h_1 \cmp (h_0 \cmp \beta \cmp \al)$.
		Note that there is no reason why $g_1 \cmp g_0 \cmp f_0$ and $g_1 \cmp g_0 \cmp f_1$ should be equal, but they are certainly both factorizing through $\beta \cmp \al$.
		We continue the same way, and finally put $g := g_{n - 1} \cmp \cdots \cmp g_0$ and $b' := \cod(g)$.
		
		Since $\beta \cmp \al$ is Ramsey, there is $v \in \ob{\fS}$ for $b', k, F'$.
		Let $\phi\maps \fS(\al, v) \to k$ be a coloring.
		Since $\fS(\beta \cmp \al, v) \subs \fS(\al, v)$, there is $e'\maps b' \to v$ such that $\phi$ is constant on $e' \cmp F' = e' \cmp g \cmp F =: e \cmp F$.
	\end{proof}
\end{lm}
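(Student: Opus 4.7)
The goal is to derive the Ramsey property of $\alpha$ from that of $\beta \cmp \alpha$, so the obvious strategy is to transform any coloring problem for $\alpha$ into one for $\beta \cmp \alpha$. The obstacle is that $\fS(\beta \cmp \alpha, b) \subs \fS(\alpha, b)$, so colorings of $\fS(\alpha, v)$ are more general than colorings of $\fS(\beta \cmp \alpha, v)$, and a finite family $F \subs \fS(\alpha, b)$ need not sit inside $\fS(\beta \cmp \alpha, b)$. The amalgamability of $\alpha$ is exactly what lets us massage $F$ so that it \emph{does} lie in the smaller hom-set, after post-composition with a single arrow.

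The plan is as follows. Fix $b \in \ob{\fS}$, $k \in \nat$, and a finite $F \subs \fS(\alpha, b)$. First I would enumerate $F = \set{f_0, \dots, f_{n-1}}$ and build, by induction on $i$, an arrow $g^{(i)}\maps b \to b_i$ such that $g^{(i)} \cmp f_j$ factors through $\beta \cmp \alpha$ for every $j \leq i$. The base case uses amalgamability of $\alpha$ directly: writing $f_0 = f_0' \cmp \alpha$, amalgamability applied to $f_0'$ and $\beta$ yields arrows $g_0, h_0$ with $g_0 \cmp f_0 = h_0 \cmp \beta \cmp \alpha$. In the inductive step I amalgamate $\alpha$ against $g^{(i-1)} \cmp f_i'$ (where $f_i = f_i' \cmp \alpha$) to obtain $g_i, h_i$ with $g_i \cmp g^{(i-1)} \cmp f_i = h_i \cmp \beta \cmp \alpha$; post-composing the earlier equalities by $g_i$ preserves the factorization-through-$\beta\cmp\alpha$ property for $f_0, \dots, f_{i-1}$. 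Setting $g := g_{n-1} \cmp \cdots \cmp g_0$ and $b' := \cod(g)$ gives $F' := g \cmp F \subs \fS(\beta \cmp \alpha, b')$.

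Next I invoke the assumption: since $\beta \cmp \alpha$ is Ramsey, there exists $v \in \ob{\fS}$ witnessing (wR) for the data $b', k, F'$. Finally, given any coloring $\phi\maps \fS(\alpha, v) \to k$, I restrict it to the subset $\fS(\beta \cmp \alpha, v) \subs \fS(\alpha, v)$; the Ramsey property of $\beta \cmp \alpha$ supplies an arrow $e'\maps b' \to v$ such that $\phi$ is constant on $e' \cmp F'$. Putting $e := e' \cmp g\maps b \to v$ gives $e \cmp F = e' \cmp g \cmp F = e' \cmp F'$, so $\phi$ is constant on $e \cmp F$, as required.

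The main obstacle is the simultaneous factorization step: amalgamability is a binary property, but one needs a single post-composition $g$ that simultaneously pushes every $f \in F$ into $\fS(\beta \cmp \alpha, \argum)$. The iterative argument above handles this, exploiting that once $g^{(i-1)} \cmp f_j = h \cmp \beta \cmp \alpha$ for some $h$, any further post-composition $g_i$ preserves this by associativity. Once this lemma is in place, the rest is routine pullback of colorings along the inclusion $\fS(\beta \cmp \alpha, v) \hookrightarrow \fS(\alpha, v)$.
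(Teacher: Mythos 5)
Your proposal is correct and follows essentially the same route as the paper's proof: iteratively amalgamate $\al$ against $\beta$ to push the finitely many members of $F$ into $\fS(\beta \cmp \al, \argum)$ via a single post-composition $g$, then apply the Ramsey property of $\beta \cmp \al$ to $g \cmp F$ and pull back the coloring along the inclusion $\fS(\beta \cmp \al, v) \subs \fS(\al, v)$. The only difference is that you spell out the application of amalgamability (via the factorizations $f_i = f_i' \cmp \al$) slightly more explicitly than the paper does.
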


\begin{wn}
	Let $\fS$ be a directed category with the weak Ramsey property.
	An $\fS$-arrow $\al\maps a \to a'$ is Ramsey if and only if it is amalgamable.
	Hence, $\fS$ has the weak amalgamation property, and it has the Ramsey property if and only if it has the amalgamation property.
	
	\begin{proof}
		Every Ramsey arrow is amalgamable by Lemma~\ref{LMerigoeg}.
		Let $\al\maps a \to a'$ be amalgamable.
		By the weak Ramsey property there is a Ramsey arrows $\beta\maps a' \to a''$.
		By Lemmma~\ref{LMcomposition}, $\beta \cmp \al$ is Ramsey, and by Lemma~\ref{LMhereditary}, $\al$ is Ramsey.
	\end{proof}
\end{wn}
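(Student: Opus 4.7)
The proof plan is essentially to assemble the three preceding lemmas. The forward direction of the biconditional, namely that every Ramsey arrow is amalgamable, is just a restatement of Lemma~\ref{LMerigoeg} and requires no further work.

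For the reverse direction, suppose $\al\maps a \to a'$ is amalgamable. The idea is to manufacture a Ramsey arrow extending $\al$ and then transfer the Ramsey property back. Using the assumption that $\fS$ has the weak Ramsey property, applied to the object $a'$, I obtain a Ramsey arrow $\beta\maps a' \to a''$. By Lemma~\ref{LMcomposition} (the ``right-absorption'' clause, i.e. postcomposing a Ramsey arrow with anything on the right stays Ramsey when $\beta$ is Ramsey), the composite $\beta \cmp \al$ is Ramsey. Now I have the hypotheses of Lemma~\ref{LMhereditary}: $\al$ is amalgamable and $\beta \cmp \al$ is Ramsey, so I conclude that $\al$ itself is Ramsey.

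The weak amalgamation property is then immediate: given any object $a$, the weak Ramsey property furnishes a Ramsey arrow with domain $a$, which by Lemma~\ref{LMerigoeg} is amalgamable, and this is exactly what the weak amalgamation property demands.

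Finally, for the equivalence of the Ramsey property with the amalgamation property, both are stated at identities, so the argument specializes: if every $\id{a}$ is Ramsey then by Lemma~\ref{LMerigoeg} every $\id{a}$ is amalgamable, giving amalgamation; conversely, if every $\id{a}$ is amalgamable then by the biconditional just established every $\id{a}$ is Ramsey, giving the Ramsey property. There is no real obstacle here; the whole corollary is essentially a two-line bookkeeping argument. The only mildly delicate point is remembering to invoke Lemma~\ref{LMcomposition} in its ``$\beta$ is Ramsey'' form rather than the ``$\al$ is Ramsey'' form, since it is $\beta$ that carries the Ramsey hypothesis in the construction above.
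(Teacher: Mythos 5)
Your proposal is correct and follows exactly the paper's own argument: Lemma~\ref{LMerigoeg} for the forward direction, then a Ramsey arrow $\beta$ from the weak Ramsey property, Lemma~\ref{LMcomposition} to make $\beta \cmp \al$ Ramsey, and Lemma~\ref{LMhereditary} to transfer the property back to $\al$. The additional bookkeeping for the weak amalgamation property and the identity-arrow specialization is also right and matches what the paper leaves implicit.
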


\begin{prop}\phantomsection 
	\begin{enumerate}
		\item Let $F\maps \fS \to \fS'$ be a full cofinal functor (where cofinal means that for every $\fS'$-object $x$ there is a $\fS$-object $y$ and an $\fS'$-arrow $f\maps x \to F(y)$).
			If $\al\maps a \to a'$ is a Ramsey arrow in $\fS$, then $F(\al)$ is a Ramsey arrow in $\fS'$.
		\item Let $\fS \subs \fS'$ be a full cofinal subcategory.
			An $\fS$-arrow $\al\maps a \to a'$ is Ramsey in $\fS$ if and only if it is Ramsey in $\fS'$.
	\end{enumerate}
	
	\begin{proof}
		(i) Let $b' \in \ob{\fS'}$, $H' \subs \fS'(F(\al), b')$ finite, and $k \in \omega$.
		By the cofinality there is $b \in \ob{\fS}$ and $f \in \fS'(b', F(b))$, and by the fullness there is finite $H \subs \fS(\al, b)$ such that $F[H] = f \cmp H'$.
		There is also $v \in \ob{\fS}$ witnessing that $\al$ is Ramsey for $b, H, k$.
		For every coloring $\phi'\maps \fS'(F(\al), F(v)) \to k$ there is the coloring $\phi := (\phi' \cmp F)\maps \fS(\al, v) \to k$, and $g \in \fS(b, v)$ such that $\phi$ is constant on $g \cmp H$, and so $\phi'$ is constant on $F[g \cmp H] = (F(g) \cmp f) \cmp H'$.
		
		The forward implication of (ii) follows from (i).
		For the backward implication let $b \in \ob{\fS}$, $H \subs \fS(\al, b)$ finite, and $k \in \omega$.
		Let $v' \in \ob{\fS'}$ be the corresponding witnessing object, and let $f \in \fS'(v', v)$ for some $v \in \ob{\fS}$.
		Every coloring $\phi\maps \fS(\al, v) \to k$ induces the coloring $\phi'\maps \fS'(\al, v') \to k$ defined by $\phi'(g) := \phi(f \cmp g)$.
		There is $g \in \fS(b, v')$ such that $\phi'$ is constant on $g \cmp H$, and so $\phi$ is constant on $(f \cmp g) \cmp H$.
	\end{proof}
\end{prop}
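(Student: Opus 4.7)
The plan is to transfer the Ramsey property across $F$ by lifting data from $\fS'$ to $\fS$, solving the problem there, and pushing the solution back. For part (i), given $b' \in \ob{\fS'}$, a finite $H' \subs \fS'(F(\al), b')$, and $k \in \nat$, I would first use the cofinality of $F$ to choose some $b \in \ob{\fS}$ together with an $\fS'$-arrow $f\maps b' \to F(b)$. Composing with $f$ sends $H'$ into a finite subset of $\fS'(F(\al), F(b))$, and fullness of $F$ lets me lift each of its elements to obtain a finite $H \subs \fS(\al, b)$ with $F[H] = f \cmp H'$. Applying the weak Ramsey property of $\al$ to the data $(b, H, k)$ produces a witness $v \in \ob{\fS}$, and I expect $F(v)$ to witness the weak Ramsey property for $F(\al)$ in $\fS'$: any coloring $\phi'\maps \fS'(F(\al), F(v)) \to k$ pulls back along $F$ to a coloring $\phi\maps \fS(\al, v) \to k$, which admits some $g\maps b \to v$ on which $\phi$ is constant on $g \cmp H$. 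Then $e := F(g) \cmp f\maps b' \to F(v)$ would satisfy $e \cmp H' = F(g) \cmp F[H] = F[g \cmp H]$, making $\phi'$ constant on $e \cmp H'$.

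For the forward direction of (ii), I would simply apply (i) to the inclusion functor $\fS \hookrightarrow \fS'$, which is full and cofinal by hypothesis. For the backward direction the strategy runs symmetrically: given $b \in \ob{\fS}$, a finite $H \subs \fS(\al, b)$, and $k \in \nat$, take the witnessing $v' \in \ob{\fS'}$ for Ramseyness of $\al$ in $\fS'$, then use cofinality to find $v \in \ob{\fS}$ and some $f \in \fS'(v', v)$. A coloring $\phi\maps \fS(\al, v) \to k$ induces $\phi'\maps \fS'(\al, v') \to k$ via $\phi'(g) := \phi(f \cmp g)$, where fullness of the inclusion is used to conclude that $f \cmp g \in \fS(a', v)$ so that $\phi$ can actually be evaluated. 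A Ramsey witness $g' \in \fS'(b, v')$ for $\phi'$ then yields $e := f \cmp g'$, which again by fullness lies in $\fS(b, v)$, and $\phi$ is constant on $e \cmp H$.

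I do not foresee any real obstacle; the argument is a routine bookkeeping exercise in pushing finite sets of arrows back and forth between $\fS$ and $\fS'$. The only places requiring care are the two uses of fullness — once to lift the finite set $H'$ (or to see that $\phi'$ is well-defined in (ii)) and once to recognize that the $\fS'$-composite used to build $e$ actually lives in $\fS$ — and the fact that cofinality is needed on the codomain side (to access $b'$ from an $\fS$-object in (i), and to access $v'$ from an $\fS$-object in (ii)). Modulo these checks, both parts reduce transparently to a single application of the weak Ramsey property in the original category.
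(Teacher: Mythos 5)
Your proposal is correct and follows essentially the same route as the paper's proof: cofinality plus fullness to transport the finite set of arrows, a single application of the Ramsey property in $\fS$, and composition with the cofinality arrow to push the witness back. Your explicit remarks on where fullness is needed in part (ii) (to see that $f \cmp g$ lands in $\fS(\al,v)$ and that the resulting $e$ is an $\fS$-arrow) are accurate and, if anything, slightly more careful than the paper's write-up.
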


\begin{wn} \label{equiRamsey}
	Let $\fS \subs \fS'$ be a full cofinal subcategory.
	$\fS$ has the weak Ramsey property if and only if $\fS'$ has the weak Ramsey property.
\end{wn}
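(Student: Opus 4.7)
The plan is to derive both directions of the equivalence directly from part (ii) of the preceding proposition, using only cofinality, fullness, and the composition lemma (Lemma~\ref{LMcomposition}) for Ramsey arrows. The idea is that cofinality lets us move from an object of one category to an object of the other via an arrow whose codomain (or domain) lives where we want, and fullness ensures that any composite of such auxiliary arrows with an $\fS$-Ramsey or $\fS'$-Ramsey arrow lies in the subcategory that actually needs to witness the weak Ramsey property.

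For the forward direction, assume $\fS$ has the weak Ramsey property and fix $a' \in \ob{\fS'}$. By cofinality of $\fS$ in $\fS'$ choose an $\fS$-object $a$ and an $\fS'$-arrow $f\maps a' \to a$. Applying the weak Ramsey property of $\fS$ to $a$ yields a Ramsey $\fS$-arrow $\al\maps a \to a''$. By part (ii) of the proposition, $\al$ is also Ramsey in $\fS'$. Then by Lemma~\ref{LMcomposition} the composite $\al \cmp f\maps a' \to a''$ is a Ramsey $\fS'$-arrow out of $a'$, which is what we needed.

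For the backward direction, assume $\fS'$ has the weak Ramsey property and fix $a \in \ob{\fS}$. Applying the weak Ramsey property of $\fS'$ to $a$ yields a Ramsey $\fS'$-arrow $\al'\maps a \to a'$. By cofinality choose an $\fS$-object $b$ and an $\fS'$-arrow $g\maps a' \to b$; by Lemma~\ref{LMcomposition} the composite $g \cmp \al'\maps a \to b$ is Ramsey in $\fS'$. Its domain and codomain both lie in $\ob{\fS}$, so by fullness $g \cmp \al'$ is an $\fS$-arrow, and part (ii) of the proposition then tells us it is Ramsey in $\fS$ as well. This produces a Ramsey $\fS$-arrow out of $a$, as required.

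The only thing to watch is the direction in which cofinality and fullness are invoked in each implication: cofinality is always applied to pair the starting object with a suitable object on the other side of the inclusion, while fullness is only needed in the backward direction to ensure that the composite produced in $\fS'$ is actually an $\fS$-arrow. No genuine obstacle arises, since part (ii) of the proposition has already done the real work of transferring the Ramsey condition across the full cofinal inclusion.
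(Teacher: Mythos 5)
Your proof is correct and follows exactly the route the paper intends: the corollary is stated without proof as an immediate consequence of the preceding proposition, and your argument fills in the details in the natural way, using cofinality to relocate the base object, Lemma~\ref{LMcomposition} to compose with the auxiliary arrows, fullness to land the composite back in $\fS$, and part (ii) of the proposition to transfer the Ramsey condition across the inclusion. No gaps.
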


\begin{df}
	Let $\fS \subs \fL$ be categories and let $U$ be a fixed $\fL$-object.
	We say that an $\fS$-arrow $\al\maps a \to a'$ satisfies
	\begin{enumerate}
		\item[(wA)] if for every $k \in \nat$, for every finite $F \subs \fL(\al, U)$, for every $\phi\maps \fL(\al, U) \to k$ there is $e \in \aut(U)$ such that $\phi$ is constant on $e \cmp F$,
		\item[(wB)] if the map $e$ in (wA) is required to be only an endomorphism instead of an automorphism of $U$, i.e. if for every $k \in \nat$, for every finite $F \subs \fL(\al, U)$, for every $\phi\maps \fL(\al, U) \to k$ there is $e \in \fL(U, U)$ such that $\phi$ is constant on $e \cmp F$.
	\end{enumerate}
	We say that $U$ has the \emph{weak finitary big Ramsey property} in $(\fS, \fL)$ if for every $a \in \ob{\fS}$ there exists an $\fS$-arrow $\al\maps a \to a'$ satisfying (wB).
	Similarly, we say that $U$ has the \emph{finitary big Ramsey property} if every $\id{a}$, $a \in \ob{\fS}$, satisfies (wB).
	
	Note that for (wA) and (wB), given $\al, k, \phi$ there is a constant value $i \in k$ that works for every finite $F \subs \fL(\al, U)$.
	Otherwise, there would be a counterexample set $F_i$ for every $i \in k$, and so $F := \bigcup_{i \in k} F_i$ would be a counterexample for (wA) or (wB).
\end{df}

\begin{remark}
	The name ``(weak) finitary big Ramsey property'' was chosen to stress the formal similarily to the standard \emph{big Ramsey property}, i.e. \emph{big Ramsey degree} \cite[p.~176]{KPT} equal to one.
	In that (rare) case, $e$ does not depend on $F$, and $\phi$ is constant on the whole set $e \cmp \fL(\al, U)$.
\end{remark}

\begin{tw} \label{THRamsey}
	Let $(\vec{u}, \vecinfty{u}, U)$ be a matching sequence in $(\fS, \fL)$ for some categories $\fS \subs \fL$, and let $\al\maps a \to a'$ be an $\fS$-arrow.
	If $\vec{u}$ is a weak Fraïssé sequence, then the following conditions are equivalent:
	\begin{enumerate}
		\item $\al$ is a Ramsey arrow, i.e. it satisfies (wR),
		\item $\al$ satisfies (wA),
		\item $\al$ satisfies (wB).
	\end{enumerate}
	It follows that $\fS$ has the (weak) Ramsey property if and only if $U$ has the (weak) finitary big Ramsey property in $(\fS, \fL)$.
	
	\begin{proof}
		By (F1), for every $f \in \fL(\al, U)$ there exists $n_f \in \omega$ and $f' \in \fS(\al, u_n)$ such that $u^\infty_n \cmp f' = f$.
		For every $n \geq n_f$ let us put $\Delta^n(f) := u^n_{n_f} \cmp f'$, so we have $f = u^\infty_n \cmp \Delta^n(f)$ and $\Delta^{n'}(f) = u^{n'}_n \cmp \Delta^n(f)$ for every $n' \geq n \geq n_f$.
		Note that by (F2) we have also $\Delta^n(u^\infty_m \cmp f) = u^n_m \cmp f$ for every compatible $\fS$-arrow $f$ and every sufficiently large $n$.
		
		The implication (wA)$\implies$(wB) is trivial.
		
		Next we prove (wB)$\implies$(wR) by contradiction.
		Suppose $\al$ fails (wR) and it is witnessed by $k \in \nat$, $b \in \ob{\fS}$ and a finite $F \subs \fS(\al, b)$.
		Specifically, for every $n \in \omega$ there is $\map{\phi_n}{\fS(\al, u_n)}{k}$ such that $\img{\phi_n}{e \cmp F}$ is not a singleton, whenever $e \in \fS(b, u_n)$.
		Fix a non-principal ultrafilter $p$ on $\nat$ and define
		$\map{\phi}{\fL(\al, U)}{k}$ by setting
		$$\phi(f) = i \iff \setof{\ntr}{\phi_n(\Delta^n(f)) = i} \in p.$$
		Note that $\Delta^n(f)$ is defined for all but finitely many numbers $n$.
		Since $\vec{u}$ is a weak Fraïssé sequence, there is an $\fS$-map $e_0\maps b \to u_{n_0}$ for some $n_0 \in \omega$.
		We consider $F' := u^\infty_{n_0} \cmp e_0 \cmp F \subs \fL(\al, U)$.
		Since $\al$ satisfies (wB), there is $j \in k$ and $e' \in \fL(U, U)$ such that $\img{\phi}{e' \cmp F'} = \sn j$.
		By (F1) we find $m \in \nat$ and $e \in \fS(b, u_m)$ such that $e' \cmp u^\infty_{n_0} \cmp e_0 = u_m^\infty \cmp e$, as in the diagram below.
		$$\begin{tikzcd}
			u_0 \ar[r] & \cdots \ar[r] & u_{n_0} \ar[r] & \cdots \ar[r] & u_m \ar[r] & \cdots \ar[r] & U \ar[dd, "e'"] \\
			& b \ar[ur, "e_0"] \ar[drrr, "e"'] \\
			u_0 \ar[r] & \cdots \ar[r] & u_{n_0} \ar[r] & \cdots \ar[r] & u_m \ar[r] & \cdots \ar[r] & U
		\end{tikzcd}$$
		Given $f \in F$, we have $\Delta^n(u_m^\infty \cmp e \cmp f) = u_m^n \cmp e \cmp f$ for every sufficiently large $n$, and hence
		\begin{align*}
			j = \phi(e' \cmp u_{n_0}^\infty \cmp e_0 \cmp f) &= \phi(u_m^\infty \cmp e \cmp f) = \lim_{n \to p} \phi_n(\Delta^n(u_m^\infty \cmp e  \cmp f)) = \lim_{n \to p} \phi_n(u_m^n \cmp e \cmp f).
		\end{align*}
		The last limit along $p$ means that the set $A_f := \setof{n \goe m}{\phi_n(u_m^n \cmp e \cmp f) = j} \in p$.
		Since $F$ is finite, we may find $\ell > m$ such that $\phi_\ell(u_m^\ell \cmp e \cmp f) = j$ for every $f \in F$.
		It is enough to choose $\ell \in \bigcap_{f \in F} A_f$.
		Together, $\phi_\ell$ restricted to $(u_m^\ell \cmp e) \cmp F$ is constant, which is a contradiction.

		To prove (wR)$\implies$(wA) let $\al\maps a \to a'$ be a Ramsey $\fS$-arrow.
		Fix $k \in \nat$, fix finite $F \subs \fL(\al, U)$, and fix $\phi\maps \fL(\al, U) \to k$.
		Our goal is to find $e \in \aut(U)$ such that $\phi$ is constant on $e \cmp F$.
		
		Since $F$ is finite, there is $m \in \omega$ such that $f = u^\infty_m \cmp \Delta^m(f)$ for every $f \in F$.
		Since $\vec{u}$ is a weak Fraïssé sequence, there is $m' \geq m$ such that $\beta := u_m^{m'}$ works in (W1), or equivalently is amalgamable.
		Let $b := u_{m'}$ and $F' := \setof{\beta \cmp \Delta^m(f)}{f \in F} \subs \fS(\al, b)$.
		Using (wR) with $F'$ and $b$ we obtain $v \in \ob{\fS}$ such that for every $\map{\psi}{\fS(\al, v)}{k}$ there is $e'\maps b \to v$ with $\psi$ constant on the set $e' \cmp F'$.
		Note that there exists at least one $\psi$ as above (unless $k = 0$, in which case (wA) is trivially true).
		Consequently, there exists an $\fS$-arrow $\map{\gamma}{b}{v}$.
		Recalling that $\beta = u^{m'}_m$ is amalgamable, we find an $\fL$-arrow $\map \delta v U$ such that
			$$u_m^\infty = \delta \cmp \gamma \cmp \beta.$$
		The following diagram should clarify the situation.
		$$\begin{tikzcd}
			a \ar[rrrrr, "f"] \ar[dr, "{\Delta^m(f)}"'] & & & & & U \ar[dd, "e"] \\
			& u_m \ar[r,"\beta"'] \ar[urrrr,"u_m^\infty"] & u_{m'} = b \ar[dr, "e'"'] \ar[rr, "\gamma"'] & & v \ar[ru, "\delta"'] & \\
			& & & v \ar[rr, "\delta"] & & U
		\end{tikzcd}$$
		Define $\map{\tilde{\phi}}{\fS(\al,v)}{k}$ by
			$$\tilde{\phi}(\xi) = \phi(\delta \cmp \xi) \qquad \text{ for every } \xi \in \fS(\al,v).$$
		The weak Ramsey property gives $e'\maps b \to v$ such that
			$$\tilde{\phi}(e' \cmp \beta \cmp \Delta^m(f)) = j$$
		for every $f \in F$, where $j \in k$ is fixed.
		Now we use the weak homogeneity of $U$ (Theorem~\ref{thm:Flim}), knowing that $\beta$ is amalgamable.
		Namely, there exists $e \in \aut(U)$ such that
			$$e \cmp \delta \cmp \gamma \cmp \beta = \delta \cmp e' \cmp \beta.$$
		Finally, given $f \in F$, we have
		\begin{align*}
			\phi(e \cmp f) &= \phi(e \cmp u_m^\infty \cmp \Delta^m(f)) 
				= \phi(e \cmp \delta \cmp \gamma \cmp \beta \cmp \Delta^m(f)) \\
				&= \phi(\delta \cmp e' \cmp \beta \cmp \Delta^m(f))
				= \tilde{\phi}(e' \cmp \beta \cmp \Delta^m(f)) = j.
		\end{align*}
		This completes the proof.	
	\end{proof}
\end{tw}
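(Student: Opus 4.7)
The plan is to establish the circle (wA) $\implies$ (wB) $\implies$ (wR) $\implies$ (wA). The first implication is immediate because every $\fL$-automorphism of $U$ is an endomorphism, so the substantive content lies in the remaining two implications.

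For (wB) $\implies$ (wR), I would argue by contradiction. Assume (wR) fails, witnessed by $b \in \ob{\fS}$, $k \in \nat$, and finite $F \subs \fS(\al, b)$; then for every $n$ there is a bad coloring $\phi_n\maps \fS(\al, u_n) \to k$ which is non-constant on $e \cmp F$ for each $\fS$-arrow $e\maps b \to u_n$. Conditions (F1) and (F2) imply that any $f \in \fL(\al, U)$ factors essentially uniquely as $u_n^\infty \cmp \Delta^n(f)$ for $n$ large enough. Fix a non-principal ultrafilter $p$ on $\nat$ and amalgamate the $\phi_n$ into a single coloring $\phi\maps \fL(\al, U) \to k$ by declaring $\phi(f) = i$ iff $\setof{n}{\phi_n(\Delta^n(f)) = i} \in p$. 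Lift $b$ to some level $u_{n_0}$ by cofinality of $\vec{u}$ via an $\fS$-arrow $e_0\maps b \to u_{n_0}$, set $F' := u_{n_0}^\infty \cmp e_0 \cmp F \subs \fL(\al, U)$, and apply (wB) to get $e \in \fL(U,U)$ with $\phi$ constantly equal to some $j$ on $e \cmp F'$. Using (F1) factor $e \cmp u_{n_0}^\infty \cmp e_0$ as $u_m^\infty \cmp \bar{e}$ for some $\fS$-arrow $\bar{e}\maps b \to u_m$. For each $f \in F$ the set $A_f := \setof{\ell \geq m}{\phi_\ell(u_m^\ell \cmp \bar{e} \cmp f) = j}$ belongs to $p$, and since $F$ is finite $\bigcap_{f \in F} A_f$ is nonempty; any $\ell$ in this intersection gives $\phi_\ell$ constant on $(u_m^\ell \cmp \bar{e}) \cmp F$, contradicting the choice of $\phi_\ell$.

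For (wR) $\implies$ (wA), the strategy is to reduce the coloring problem on $U$ to a problem at a single level $u_m$ of $\vec{u}$ and then lift the solution back up using the weak homogeneity of $U$ guaranteed by Theorem~\ref{thm:Flim}. Given finite $F \subs \fL(\al, U)$ and $\phi\maps \fL(\al, U) \to k$, pick $m$ so that each $f \in F$ factors as $u_m^\infty \cmp \Delta^m(f)$, then use weak absorption (W1) to pick $m' \geq m$ such that $\beta := u_m^{m'}$ is amalgamable. Set $b := u_{m'}$, $F' := \setof{\beta \cmp \Delta^m(f)}{f \in F}$, pick any $\fS$-arrow $\gamma\maps b \to v$, and use amalgamability of $\beta$ together with weak injectivity of $U$ to find $\delta\maps v \to U$ with $\delta \cmp \gamma \cmp \beta = u_m^\infty$. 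Pull $\phi$ back to $\tilde\phi\maps \fS(\al, v) \to k$ by $\tilde\phi(\xi) := \phi(\delta \cmp \xi)$ and apply (wR) to $\al, b, k, F'$ (enlarging $v$ if needed) to obtain $e'\maps b \to v$ with $\tilde\phi$ constant on $e' \cmp F'$. Since $U$ is homogeneous at the amalgamable arrow $\beta$, there is $e \in \aut(U)$ satisfying $e \cmp \delta \cmp \gamma \cmp \beta = \delta \cmp e' \cmp \beta$, and a short chase starting from $f = u_m^\infty \cmp \Delta^m(f) = \delta \cmp \gamma \cmp \beta \cmp \Delta^m(f)$ shows $\phi(e \cmp f) = \tilde\phi(e' \cmp \beta \cmp \Delta^m(f))$, which is constant on $F$.

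The main obstacle I expect is the implication (wB) $\implies$ (wR): one must fabricate a single coloring of the infinite set $\fL(\al, U)$ from an inconsistent family $\set{\phi_n}_{n \in \nat}$ and then extract a contradiction at a single finite level. The ultrafilter provides exactly enough coherence for each $\phi_n$ to agree with $\phi$ on a $p$-large set, and finiteness of $F$ permits intersecting only finitely many such sets to land in a common good level; the remainder is careful bookkeeping with the factorizations $\Delta^n$ and the commutations furnished by (F1) and (F2).
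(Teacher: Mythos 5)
Your proposal is correct and follows essentially the same route as the paper's proof: the same ultrafilter amalgamation of the bad colorings $\phi_n$ for (wB)$\implies$(wR), and the same reduction to a level $u_m$ with an amalgamable $\beta = u_m^{m'}$, the retraction $\delta \cmp \gamma \cmp \beta = u_m^\infty$, and homogeneity at $\beta$ for (wR)$\implies$(wA). The only cosmetic difference is the order in which $v$ and $\gamma$ are introduced, which does not affect the argument.
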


\subsection{Extreme amenability}

Recall that an action $G \acts X$ of a group $G$ on a set $X$ is a group homomorphism $\eta\maps G \to \aut(X)$, where $\aut(X)$ is the the group of all bijections of $X$.
We shall write $g x$ or $g \cdot x$ instead of $\eta(g)(x)$.
This way an action can be equivalently viewed as a map $G \times X \to X$ such that $1 x = x$ and $g (h x) = (g h) x$ for $g, h \in G$ and $x \in X$.
Given $G \acts X$, the \emph{orbit} of $x_0 \in X$ is $G x_0 := \setof{g x_0}{g \in G}$.
The action is \emph{transitive} if $G x_0 = X$ for some $x_0$ (equivalently: for every $x_0$).

A morphism $\pi\maps \eta \to \eta'$ of actions $\eta\maps G \acts X$ and $\eta'\maps G \acts Y$ is a mapping $\pi\maps X \to Y$ such that $\pi(gx) = g\pi(x)$ for every $g \in G$ and $x \in X$, or equivalently $\pi \cmp \eta(g) = \eta'(g) \cmp \pi$ for every $g \in G$.

An action $G \acts X$ of a topological group $G$ on a topological space $X$ is \emph{continuous} if it is continuous when viewed as a mapping $G\times X \to X$ with respect to the product topology on $G \times X$.
Recall that a topological group $G$ is called \emph{extremely amenable} if every continuous action $G \acts X$ on a compact space $X$ has a \emph{fixed point}, i.e. there is a point $x_0 \in X$ such that $g x_0 = x_0$ for every $g \in G$.

\begin{df}
	An action $G \acts X$ of a group $G$ on a set $X$ is \emph{finitely oscillation stable} if 
	\begin{enumerate}
		\item[(FS)] for every $k \in \nat$, for every $\phi\maps X \to k$, and for every finite set $F \subs X$ there exists $g \in G$ such that $\phi$ is constant on $g F$.
	\end{enumerate}
	This is an equivalent formulation of the standard finite oscillation stability~\cite[1.1]{Pestov} of a discrete space $X$, see \cite[Theorem~1.1.18 (7)]{Pestov}.
	Note that in the situation of Definition~\ref{DFweakRmsyPy} an $\fS$-arrow $\al$ satisfies (wA) if and only if the action $\aut(U) \acts \fL(\al, U)$ satisfies (FS).
\end{df}

\begin{prop}\label{PROPwefheof}
	Let $G$ be a topological group with a neighborhood base $\Vee$ of its unit, consisting of open subgroups.
	The following properties are equivalent.
	\begin{enumerate}[label=\rm(\alph*)]
		\item $G$ is extremely amenable.
		\item For every $V \in \Vee$, the action $G \acts G \by V$ on left cosets ($g \cdot hV = ghV$) satisfies (FS).
	\end{enumerate}
\end{prop}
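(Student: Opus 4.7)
The plan is to prove the two implications separately, using the hypothesis that $\Vee$ is a neighborhood base of the identity consisting of open subgroups.

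For $(a)\Rightarrow(b)$: Fix $V \in \Vee$ and consider the compact space $Y := k^{G/V}$ (product topology) with the $G$-action $(g \cdot \psi)(hV) := \psi(g^{-1}hV)$. This action is continuous because for any finite set of coordinates $\set{h_iV}_{i<n}$ the open subgroup $W := \bigcap_i h_iVh_i^{-1}$ stabilizes the corresponding cylinder set. Given $\phi\maps G/V \to k$ and a finite $F \subs G/V$, the orbit closure $\overline{G\phi} \subs Y$ is a compact $G$-subflow, so by (a) it contains a fixed point $\psi$; since $G$ acts transitively on $G/V$, this $\psi$ is forced to be a constant function $\psi \equiv i$. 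Approximating $\psi$ by some $g \cdot \phi$ on the finite set $F$ (an open condition in the product topology) yields $g \in G$ with $g\phi \equiv i$ on $F$, i.e.\ $\phi \equiv i$ on $g^{-1}F$.

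For $(b)\Rightarrow(a)$: Let $G \acts X$ be a continuous action on a compact Hausdorff space. The closed sets $D(f, F, \eps) := \set{x \in X : \diam f(Fx) \leq \eps}$ indexed by continuous $f\maps X \to K$ (into a compact metric $K$), finite $F \subs G$ with $e \in F$, and $\eps > 0$, have total intersection equal to the $G$-fixed point set of $X$ by Urysohn. So by compactness of $X$ it suffices to verify the finite intersection property. A standard combining trick (vector-valued $f$, union of the $F_j$'s, minimum $\eps_j$) reduces this to showing each $D(f, F, \eps)$ is nonempty; picking any base point $x_0 \in X$, it suffices to find $h \in G$ with $\diam f(Fhx_0) < \eps$.

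I cover $X$ by finitely many open sets $U_1, \ldots, U_k$ of $f$-diameter less than $\eps$ and claim there is an open subgroup $V$ such that for every $y \in X$ one has $Vy \subs U_i$ for some $i$. This $V$ comes from a standard compactness argument: for each $y \in X$, continuity of the action at $(e, y)$ gives an open subgroup $V_y^*$ and an open neighborhood $W_y \ni y$ with $V_y^* W_y \subs U_{i(y)}$; a finite subcover $\set{W_{y_j}}$ of $X$ together with $V := \bigcap_j V_{y_j}^*$ does the job. Replacing $V$ by a smaller subgroup in $\Vee$, whose (FS) implies (FS) for the larger $V$ via the surjection $G/V' \twoheadrightarrow G/V$, I may assume (FS) applies directly. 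Now define
$$\phi\maps G/V \to k, \qquad \phi(hV) := \min\set{i < k : V(h^{-1}x_0) \subs U_i},$$
which is well-defined because $h'V = hV$ gives $h' = hv$ for some $v \in V$, whence $V(h'^{-1}x_0) = Vv^{-1}(h^{-1}x_0) = V(h^{-1}x_0)$. Taking $F' := \set{f^{-1}V : f \in F}$ and applying (FS) to $\phi$ yields $\gamma \in G$ and $i_0$ with $\phi(\gamma f^{-1}V) = i_0$ for every $f \in F$; unraveling gives $V(f\gamma^{-1}x_0) \subs U_{i_0}$ and hence $F\gamma^{-1}x_0 \subs U_{i_0}$, so $h := \gamma^{-1}$ satisfies $\diam f(Fhx_0) < \eps$.

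The main technical obstacle is the uniform-continuity step producing the open subgroup $V$; the subsequent application of (FS) is essentially book-keeping, modulo the small twist that $F$ has to be inverted in the definition of $F'$, because $\phi$ is naturally well-defined on left cosets only through the coordinate $h^{-1}x_0$, while (FS) is phrased in terms of left translates $\gamma F'$.
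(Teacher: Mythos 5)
Your proof is correct. Note, however, that the paper does not prove this proposition at all: it cites \cite{KPT}, Prop.~4.2, and merely remarks that the argument there uses nothing beyond the existence of the neighbourhood base $\Vee$ of open subgroups. So what you have written is a self-contained replacement for that citation rather than a variant of an in-paper argument. Your (a)$\Rightarrow$(b) is the standard route: the shift action on the compact space $k^{G/V}$ is continuous precisely because $V$ is open, a fixed point in the orbit closure of $\phi$ must be constant by transitivity of $G \acts G\by V$, and approximating it on the finitely many coordinates in $F$ produces the required translate. Your (b)$\Rightarrow$(a) avoids the oscillation-stability machinery for uniform spaces that underlies the treatment in \cite{KPT} and argues directly by compactness: the fixed-point set is $\bigcap D(f,F,\eps)$, the finite intersection property reduces (by the product/union/minimum trick) to nonemptiness of a single $D(f,F,\eps)$, and the key step --- converting a finite open cover of $X$ by sets of small $f$-diameter into a $k$-colouring of $G\by V$ via an open subgroup $V$ chosen uniformly over $X$ --- is exactly where the hypothesis on $\Vee$ enters; the inversion in $F'=\set{g^{-1}V : g\in F}$ is handled correctly, as is the passage to a smaller $V'\in\Vee$ through the equivariant surjection $G\by V' \to G\by V$. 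Two cosmetic points only: you overload the letter $f$ (both the test function $X\to K$ and the elements of $F\subs G$), and in the continuity check for the shift action the subgroup $W=\bigcap_i h_iVh_i^{-1}$ works provided you use the right coset $Wg_0$ as the neighbourhood of a point $g_0$ (for left cosets $g_0W$ one should conjugate by $g_0^{-1}h_i$ instead); neither affects correctness.
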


The proof can be found essentially in~\cite[Prop. 4.2]{KPT}, where it is assumed that $G$ is a closed subgroup of $S_\infty$, however the proof uses exclusively the existence of a neighborhood base $\Vee$ as above (see also the remarks after~\cite[Prop. 4.2]{KPT}).
Recall that a topological group $G$ embeds into $S_\infty$ as a closed subgroup if and only if it is a non-archimedean Polish group, i.e. if it has a countable neighborhood base $\Vee$ of the unit consisting of open subgroups and is separable.


\begin{remark}
	Let $(\vec{u}, \vecinfty{u}, U)$ be a matching sequence in $(\fS, \fL)$.
	Recall that a basic neighborhood of the identity $\id U \in G := \aut(U)$ is of the form
	\[
		V_m = \setof{g \in G}{g \cmp u_m^\infty = u_m^\infty},
	\]
	where $m \in \nat$.
	This is obviously a subgroup of $G$.
	In fact, it is the stabilizer of $u^\infty_m$ of the action $G \acts \fL(u_m, U)$.
	Hence, the map $\pi\maps G \by V_m \to G \cmp u^\infty_m$ defined by $h \cmp V_m \mapsto h \cmp u^\infty_m$ is an isomorphism of the action $G \acts G \by V_m$ on left cosets and the action $G \acts G \cmp u^\infty_m$ of the automorphism group on the orbit $G \cmp u^\infty_m \subs \fL(u_m, U)$.
	Moreover, if $m' \geq m$ is such that $u^{m'}_m$ is amalgamable, then by the weak homogeneity the orbit $G \cmp u^\infty_m$ is the whole $\fL(u^{m'}_m, U)$.
\end{remark}

\begin{tw} \label{thm:KPT}
	Let $(\vec{u}, \vecinfty{u}, U)$ be a matching sequence in $(\fS, \fL)$ for some categories $\fS \subs \fL$.
	If $\vec{u}$ is a weak Fraïssé sequence, then the following conditions are equivalent.
	\begin{enumerate}
		\item $\aut(U)$ is extremely amenable.
		\item $U$ has the weak finitary big Ramsey property in $(\fS, \fL)$.
		\item $\fS$ has the weak Ramsey property.
	\end{enumerate}
	
	\begin{proof}
		Put $G := \aut(U)$.
		Let $m \in \nat$ and $m' \geq m$ such that $u^{m'}_m$ is amalgamable.
		By the previous remark, we have an isomorphism of the actions $G \acts G \by V_m$ and $G \acts \fL(u^{m'}_m, U)$.
		Therefore, $G \acts G \by V_m$ satisfies (FS) if and only if $u^{m'}_m$ satisfies (wA), or by Theorem~\ref{THRamsey} equivalently (wR).
		
		Suppose $G$ is extremely amenable.
		For every $a \in \ob{\fS}$ there is an $\fS$-arrow $f\maps a \to u_m$ for some $m \in \nat$, and there is $m' \geq m$ such that $u^{m'}_m$ is amalgamable.
		By Proposition~\ref{PROPwefheof} and the claim above, $u^{m'}_m$ is a Ramsey arrow, and so $\al := u^{m'}_m \cmp f$ is a Ramsey arrow as well by Lemma~\ref{LMcomposition}.
		Hence $\fS$ has the weak Ramsey property.
		
		Suppose $\fS$ has the weak Ramsey property.
		For every $m \in \nat$ there is $m' \geq m$ such that $u^{m'}_m$ amalgamable.
		By Lemma~\ref{LMhereditary}, $u^{m'}_m$ is a Ramsey arrow, and so by the claim above, $G \acts G \by V_m$ satisfies (FS).
		It follows from Proposition~\ref{PROPwefheof} that $G$ is extremely amenable.
		
		We have (ii) $\iff$ (iii) already by Theorem~\ref{THRamsey}.
	\end{proof}
\end{tw}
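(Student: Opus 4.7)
The plan is to use Proposition~\ref{PROPwefheof} as the hinge, which reduces extreme amenability of $G := \aut(U)$ to the finite oscillation stability (FS) of every left coset action $G \acts G \by V$ where $V$ ranges over a neighborhood base of open subgroups of the unit. Since the equivalence (ii) $\iff$ (iii) is already delivered by Theorem~\ref{THRamsey}, the essential work is to establish (i) $\iff$ (iii), and in fact it suffices to identify (FS) for $G \acts G \by V_m$ with (wA) for an appropriate $\fS$-arrow.

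The bridge is exactly the isomorphism of $G$-actions pointed out in the remark preceding the theorem: the stabilizer of $u^\infty_m$ under $G \acts \fL(u_m, U)$ is $V_m$, so the coset map $h \cmp V_m \mapsto h \cmp u^\infty_m$ identifies $G \acts G \by V_m$ with $G \acts G \cmp u^\infty_m$. Because $\vec{u}$ is a weak Fraïssé sequence, given $m$ one can pick $m' \geq m$ such that $\beta := u^{m'}_m$ is amalgamable, and then the weak homogeneity of $U$ (Theorem~\ref{thm:Flim}) gives $G \cmp u^\infty_m = \fL(\beta, U)$. Under this identification, (FS) for $G \acts G \by V_m$ is literally the statement (wA) for the arrow $\beta$, which by Theorem~\ref{THRamsey} is equivalent to $\beta$ being a Ramsey arrow (i.e.\ satisfying (wR)).

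With these identifications in place, the implication (i) $\implies$ (iii) proceeds as follows. Given $a \in \ob{\fS}$, cofinality of $\vec{u}$ (condition (W0)) yields an $\fS$-arrow $f\maps a \to u_m$ for some $m$; choose $m' \geq m$ with $u^{m'}_m$ amalgamable. Extreme amenability of $G$ together with Proposition~\ref{PROPwefheof} gives (FS) for $G \acts G \by V_m$, hence $u^{m'}_m$ is a Ramsey arrow by the identification above. Then Lemma~\ref{LMcomposition} shows $\al := u^{m'}_m \cmp f\maps a \to u_{m'}$ is also Ramsey, witnessing the weak Ramsey property of $\fS$.

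The subtler direction is (iii) $\implies$ (i), because the weak Ramsey property only produces a Ramsey arrow out of each object, not out of any prescribed starting arrow such as $u^{m'}_m$. The trick is to combine Lemmas~\ref{LMcomposition} and~\ref{LMhereditary}: given $m' \geq m$ with $u^{m'}_m$ amalgamable, apply the weak Ramsey property at $u_{m'}$ to obtain a Ramsey arrow $\gamma\maps u_{m'} \to z$. Then $\gamma \cmp u^{m'}_m$ is Ramsey by Lemma~\ref{LMcomposition}, and since $u^{m'}_m$ is amalgamable, Lemma~\ref{LMhereditary} promotes $u^{m'}_m$ itself to a Ramsey arrow. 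Via Theorem~\ref{THRamsey} this gives (wA) for $u^{m'}_m$, i.e.\ (FS) for $G \acts G \by V_m$; since the $V_m$ form a neighborhood base of the unit, Proposition~\ref{PROPwefheof} concludes that $G$ is extremely amenable. The main obstacle — the gap between ``Ramsey arrows exist out of every object'' and ``the prescribed transition maps $u^{m'}_m$ are Ramsey'' — is exactly what the combination of Lemmas~\ref{LMcomposition} and~\ref{LMhereditary} bridges.
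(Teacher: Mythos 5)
Your proposal is correct and follows essentially the same route as the paper: reduce extreme amenability to (FS) for the coset actions $G \acts G \by V_m$ via Proposition~\ref{PROPwefheof}, identify these with $G \acts \fL(u^{m'}_m, U)$ using the stabilizer remark and weak homogeneity, translate (FS)/(wA) to (wR) via Theorem~\ref{THRamsey}, and close the loop with Lemmas~\ref{LMcomposition} and~\ref{LMhereditary}. Your explicit expansion of the step promoting $u^{m'}_m$ to a Ramsey arrow (composing with a Ramsey arrow out of $u_{m'}$ and then applying heredity) is exactly what the paper's terser citation of Lemma~\ref{LMhereditary} implicitly uses.
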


Recalling that the topological group $\aut(U)$ does not depend on the choice of $\fL$ and of weak Fraïssé matching sequence $(\vec{u}, \vecinfty{u}, U)$ in $(\fS, \fL)$ (see Construction~\ref{con:GS}), we obtain the following.
\begin{wn}
	A weak Fraïssé category $\fS$ has the weak Ramsey property if and only if the topological group $G(\fS)$ is extremely amenable.
\end{wn}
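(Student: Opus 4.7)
The plan is to obtain this as a straightforward corollary of Theorem~\ref{thm:KPT} by fixing a concrete matching sequence that realizes $G(\fS)$.

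First I would invoke the characterization theorem for weak \fra\ categories: since $\fS$ is a weak \fra\ category, there exists a weak \fra\ sequence $\vec{u}$ in $\fS$. By Construction~\ref{con:Gus}, any sequence in $\fS$ admits a canonical matching sequence $(\vec{u}, \set{\vec{\imath}_n}_{n \in \nat}, \vec{u})$ in the pair $(\fS, \sigma_0\fS)$, where the ``large object'' is $\vec{u}$ itself viewed as a $\sigma_0\fS$-object. This is matching regardless of whether $\vec{u}$ is weak \fra, but of course here it is.

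Next I would identify the topological groups: by Construction~\ref{con:GS} (which summarizes Construction~\ref{con:Gus} combined with Remark~\ref{RMseq} on uniqueness of weak \fra\ sequences up to isomorphism), the automorphism group $\aut_{\sigma_0\fS}(\vec{u})$ of this large object, equipped with its canonical topology, is isomorphic as a topological group to $G(\fS)$. So proving the corollary reduces to showing that this particular automorphism group is extremely amenable if and only if $\fS$ has the weak Ramsey property.

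Finally, I would apply Theorem~\ref{thm:KPT} to the matching sequence $(\vec{u}, \set{\vec{\imath}_n}_{n \in \nat}, \vec{u})$ in $(\fS, \sigma_0\fS)$: since $\vec{u}$ is a weak \fra\ sequence, conditions (i), (ii) and (iii) of that theorem are equivalent. The equivalence (i)$\iff$(iii) says exactly that $\aut_{\sigma_0\fS}(\vec{u}) \cong G(\fS)$ is extremely amenable if and only if $\fS$ has the weak Ramsey property, which is what we want.

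There is essentially no obstacle here; the only thing to be a little careful about is that the notion of $G(\fS)$ is well defined (requiring the uniqueness-up-to-isomorphism of weak \fra\ sequences and the fact that the induced topology does not depend on the choice of matching sequence) — both points are already recorded in Remark~\ref{RMseq} and Construction~\ref{con:GS}, so they can simply be cited.
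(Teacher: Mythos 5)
Your proposal is correct and follows essentially the same route as the paper: the corollary is obtained by applying Theorem~\ref{thm:KPT} to some matching weak Fraïssé sequence realizing $G(\fS)$, with the well-definedness of $G(\fS)$ supplied by Construction~\ref{con:GS} and Remark~\ref{RMseq}. Your choice to make the canonical matching sequence $(\vec{u}, \set{\vec{\imath}_n}_{n \in \nat}, \vec{u})$ in $(\fS, \sigma_0\fS)$ explicit is a harmless (indeed slightly more careful) instantiation of what the paper leaves implicit.
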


In the classical case when $\fS$ is a family of finite first-order structures with all embeddings, our category is locally finite, which allows us to simplify the weak Ramsey property.
In this case, when expanded, we obtain the following corollary.
Also recall that the topology on $\aut(U)$ does not depend on the choice of a weak Fraïssé sequence.

\begin{wn}
	Let $L$ be a first-order language, let $\fL$ be the category of all $L$-structures and all homomorphisms, and let $\fS \subs \fL$ be a subcategory of some finite $L$-structures and all embeddings between them.
	If $\fS$ is a weak Fraïssé category with a generic object $U$, then the following properties are equivalent.
	\begin{enumerate}[label=\rm(\alph*)]
		\item $\aut(U)$ is extremely amenable.
		\item For every $a \in \ob{\fS}$ there is an $\fS$-arrow $\map{\al}{a}{a'}$ such that for every $b \in \ob{\fS}$, for every $k \in \nat$ there exists $v \in \ob{\fS}$ such that for every coloring $\map{\phi}{\fS(\al,v)}{k}$ there exists $\map e b v$ with $\phi$ constant on $e \cmp \fS(\al,b)$. 
	\end{enumerate}
\end{wn}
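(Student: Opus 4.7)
The plan is to derive this corollary directly from Theorem~\ref{thm:KPT} by specializing to the classical setting, using the observations in Construction~\ref{sigma_closure} and Remark~\ref{rm:Flim} to set up a matching sequence, and then invoking local finiteness to simplify the formulation.

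First I would fix a weak Fraïssé sequence $\vec{u}$ in $\fS$; such a sequence exists since $\fS$ is assumed to be a weak Fraïssé category. By Construction~\ref{sigma_closure}, this sequence has a common colimit $(\vecinfty{u}, U)$ in $\sigma\fS$ and in $\fL$, and the coned sequence $(\vec{u}, \vecinfty{u}, U)$ is matching in $(\fS, \fL)$. By Remark~\ref{rm:Flim}, the object $U$ is the (up to isomorphism unique) generic object of $\fS$, so this $U$ coincides with the given one. Hence the hypotheses of Theorem~\ref{thm:KPT} are satisfied, and we obtain the equivalence between extreme amenability of $\aut(U)$ and the weak Ramsey property of $\fS$.

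The remaining task is to translate the abstract weak Ramsey property (Definition~\ref{DFweakRmsyPy}) into the simplified form appearing in (b). Since every object in $\fS$ is a finite $L$-structure and morphisms are embeddings, for any $a', b \in \ob{\fS}$ the hom-set $\fS(a', b)$ is finite (there are only finitely many embeddings of a finite structure into another finite structure), so $\fS$ is locally finite. As noted in Definition~\ref{DFweakRmsyPy}, for locally finite categories one can simply take $F = \fS(\al, b)$, since any finite $F \subs \fS(\al, b)$ is contained in the whole (finite) hom-set $\fS(\al, b)$, and a coloring constant on $e \cmp \fS(\al, b)$ is in particular constant on $e \cmp F$. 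Conversely, $\fS(\al, b)$ itself is a valid choice of finite $F$. This gives the equivalence between the weak Ramsey property and condition (b).

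There is no real obstacle: the proof is essentially a bookkeeping exercise combining Theorem~\ref{thm:KPT}, Construction~\ref{sigma_closure}/Remark~\ref{rm:Flim} (to produce a matching sequence in the classical model-theoretic setup), and local finiteness of $\fS$ (to eliminate the quantification over finite subsets $F$). The only subtlety worth mentioning explicitly is that the topology on $\aut(U)$ is independent of the choice of weak Fraïssé sequence (Construction~\ref{con:GS}), so the statement ``$\aut(U)$ is extremely amenable'' is well-defined for the generic object $U$ without reference to a particular matching sequence.
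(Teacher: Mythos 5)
Your proposal is correct and follows exactly the route the paper intends: the paper states this corollary without a separate proof, deriving it from Theorem~\ref{thm:KPT} via the matching colimit sequence of Construction~\ref{sigma_closure} and Remark~\ref{rm:Flim}, together with the observation in Definition~\ref{DFweakRmsyPy} that local finiteness lets one replace the finite set $F$ by the whole hom-set $\fS(\al,b)$. Your bookkeeping (including the remark that the topology on $\aut(U)$ is independent of the chosen weak Fraïssé sequence) is accurate and complete.
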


\subsection{Amalgamation extension and arrow extension} \label{sec:extensions}

We briefly discuss the phenomenon of weak versions of certain notions like the amalgamation property and the Ramsey property from theoretical perspective.
In both situations the core property is localized at individual objects of a category and then generalized from objects to arrows.
Here we note that amalgamable/Ramsey arrows may be viewed as arrows factorizing through amalgamable/Ramsey objects in a certain extension category.

First, observe that for a full cofinal subcategory $\fC \subs \fC'$ we have that a $\fC$-arrow is amalgamable in $\fC$ if and only if it is amalgamable in $\fC'$.
In this section we shall work with full subcategories, and they will be sometimes identified with their classes of objects, e.g. $\fC' \setminus \fC$ denotes the full subcategory of $\fC'$ consisting of objects in $\ob{\fC'} \setminus \ob{\fC'}$.

Let $\fC$ be a category.
By $\am(\fC)$ we denote the full subcategory of $\fC$ consisting of all amalgamable objects.
Suppose that both $\am(\fC)$ and $\fC \setminus \am(\fC)$ are cofinal.
Then $\am(\fC)$ has the amalgamation property, $\fC$ has the cofinal amalgamation property~\cite{KweakFra} but not the amalgamation property, and $\fC \setminus \am(\fC)$ has the weak amalgamation property but not the cofinal amalgamation property.
In fact, $\fC \setminus \am(\fC)$ has no amalgamable objects.
So sometimes we may obtain a category with the weak amalgamation property without any amalgamable objects simply by removing the amalgamable objects.
Sometimes it even happens that every amalgamable arrow in $\fC \setminus \am(\fC)$ factorizes through an object in $\am(\fC)$.
Let us capture this situation by a definition.

\begin{df}
	By an \emph{amalgamation extension} of a category $\fC$ we mean a category $\fC' \sups \fC$ such that $\fC$ is a full cofinal subcategory of $\fC'$, every object of $\fC' \setminus \fC$ is amalgamable in $\fC'$, and every amalgamable $\fC$-arrow factorizes through an amalgamable object in $\fC'$.
	It follows that a $\fC$-arrow is amalgamable if and only if it factorizes through an amalgamable object in $\fC'$.
\end{df}

\begin{prop} \label{thm:amalgamation_extension}
	Let $\fS \subs \fS'$ be an amalgamation extension.
	\begin{enumerate}
		\item $\fS$ has the weak amalgamation property if and only if $\am(\fS')$ is cofinal in $\fS'$.
		\item Under the conditions in (i), $\fS$ is a weak Fraïssé category if and only if $\am(\fS')$ is a Fraïssé category.
			Moreover, an $\fS$-sequence $\vec{u}$ is weak Fraïssé in $\fS$ if and only if it is weak Fraïssé in $\fS'$, and an $\am(\fS')$-sequence $\vec{v}$ is Fraïssé in $\am(\fS')$ if and only if it is weak Fraïssé in $\fS'$.
			In this case, the sequences $\vec{u}$ and $\vec{v}$ are isomorphic, and so the topological groups $G(\fS)$ and $G(\am(\fS'))$ are isomorphic as well.
		\item Under the conditions in (i), $\fS$ has the weak Ramsey property if and only if $\am(\fS')$ has the Ramsey property.
	\end{enumerate}
	
	\begin{proof}
		Claim~(i) is clear from the fact that amalgamable arrows in $\fS$ are exactly arrows factorizing through an $\am(\fS')$-object.
		Claim~(ii) follows from the fact both $\fS$ and $\am(\fS')$ are full and cofinal in $\fS'$, and so directedness, weak domination by a countable subcategory, and the property of being a weak Fraïssé sequence is translated between $\fS'$ and the subcategories.
		For the rest see Construction~\ref{con:GS}.
		Claim~(iii) follows from two applications of Corollary~\ref{equiRamsey}.
	\end{proof}
\end{prop}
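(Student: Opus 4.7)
The plan is to dispatch the three claims in turn, each time reducing to results already established earlier in the paper.

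For (i), I would argue straight from the definition of an amalgamation extension. Such an extension has the property that an $\fS$-arrow is amalgamable in $\fS$ (equivalently, in $\fS'$, by fullness and cofinality) if and only if it factorizes through an object of $\am(\fS')$. Thus the weak amalgamation property of $\fS$ — asserting an amalgamable outgoing arrow at every $\fS$-object — becomes the assertion that every $\fS$-object admits an $\fS'$-arrow into $\am(\fS')$. Combined with the cofinality of $\fS$ in $\fS'$, this is exactly the cofinality of $\am(\fS')$ in $\fS'$, and the converse is identical.

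For (ii), I would exploit the fact that $\fS$ and $\am(\fS')$ are both full cofinal subcategories of $\fS'$. Directedness, countable weak domination, and the weak absorption condition (W1) for a sequence quantify only over existence of arrows in the ambient category, so they transfer between $\fS'$ and any full cofinal subcategory once one lifts witnesses through the cofinal inclusion. Consequently, weak Fraïssé sequences in $\fS$ coincide with weak Fraïssé sequences in $\fS'$ supported on $\fS$-objects. Inside $\am(\fS')$, every identity is amalgamable, so in (W1) one may take $m = n$, upgrading a weak Fraïssé sequence to a genuine Fraïssé sequence and the weak amalgamation property to the amalgamation property. Uniqueness of weak Fraïssé sequences up to isomorphism (Remark~\ref{RMseq}), applied inside $\fS'$, forces the chosen $\vec u$ in $\fS$ and $\vec v$ in $\am(\fS')$ to be isomorphic as sequences in $\fS'$. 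Construction~\ref{con:GS} then identifies the topological groups $G(\fS)$ and $G(\am(\fS'))$ with the group determined by the common isomorphism class of these sequences.

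For (iii), I would apply Corollary~\ref{equiRamsey} to the two full cofinal inclusions $\fS \subseteq \fS'$ and $\am(\fS') \subseteq \fS'$, concluding that the weak Ramsey property is shared among the three categories. Finally, since every object of $\am(\fS')$ is amalgamable, the corollary immediately following Lemma~\ref{LMhereditary} upgrades the weak Ramsey property in $\am(\fS')$ to the Ramsey property, yielding the desired equivalence.

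The main obstacle is the verification in (ii) that weak absorption is genuinely preserved in both directions between $\fS$, $\am(\fS')$ and $\fS'$, and that the normalization from weak Fraïssé to Fraïssé inside $\am(\fS')$ goes through. Both reduce to transporting (W1) witnesses across the cofinal inclusion while remembering that every $\am(\fS')$-object is amalgamable; once this is in hand, the uniqueness clause of weak Fraïssé theory closes the argument, and (i) and (iii) are essentially formal.
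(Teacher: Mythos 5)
Your proposal is correct and follows essentially the same route as the paper's proof: (i) from the characterization of amalgamable $\fS$-arrows as those factoring through $\am(\fS')$, (ii) by transferring directedness, countable weak domination and (W1) across the two full cofinal inclusions together with uniqueness of weak Fraïssé sequences and Construction~\ref{con:Gus}/\ref{con:GS}, and (iii) by two applications of Corollary~\ref{equiRamsey}. You make explicit one step the paper leaves implicit — upgrading the weak Ramsey property of $\am(\fS')$ to the Ramsey property because all its objects are amalgamable (Lemma~\ref{LMhereditary} applied to identities) — which is a welcome clarification rather than a deviation.
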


\begin{ex}
	Let $\fC$ be the category of all finite acyclic graphs and all embeddings.
	Then amalgamable objects are exactly connected graphs in $\fC$, i.e. finite trees.
	Moreover, $\fC$ is an amalgamation extension of $\fC \setminus \am(\fC)$.
	In other words, an embedding $e\maps G \to G'$ in the category of disconnected finite acyclic graphs $\fC \setminus \am(\fC)$ is amalgamable if and only if $e[G]$ lies in a single component of $G'$.
	This is because the only obstruction to amalgamation in $\fC$ is when two components of a given graph are connected by incompatible paths (e.g. of different lengths) in different extensions – the amalgamation would contain a cycle, which is forbidden.
\end{ex}

As seen in the example, amalgamation extensions may arise naturally.
On the other hand, every category admits at least the following “artificial” amalgamation extension.

\begin{df}
	For every category $\fC$ we define its \emph{arrow extension} $\arex{\fC}$ as follows.
	A $\arex{\fC}$-object is a $\fC$-arrow $\al\maps a \to a'$, sometimes written as a pair $(a, \al)$.
	A $\arex{\fC}$-arrow $f\maps (a, \al) \to (b, \beta)$ is a $\fC$-arrow $f\maps a \to b$ that factorizes through $\al$ (i.e. such that there is a $\fC$-arrow $f'$ with $f' \cmp \al = f$) or $\id{a}$ if $\al = \beta$, so we have identities in $\arex{\fC}$.
	The composition in $\arex{\fC}$ defined by the composition in $\fC$ is correct.
	In fact, for every $\arex{\fC}$-arrow $f\maps (a, \al) \to (b, \beta)$, every $\fC$-arrow $g\maps b \to c$, and every $\arex{\fC}$-object $(c, \gamma)$ we have that $g \cmp f$ is a $\arex{\fC}$-arrow $(a, \al) \to (c, \gamma)$.
	
	Note that the natural functor $F\maps \fC \to \arex{\fC}$ mapping $a \mapsto (a, \id{a})$ is fully faithful, and so $\fC$ may be identified with the full subcategory of $\arex{\fC}$ consisting of identities.
	On the other hand, we also have the faithful functor $U\maps \arex{\fC} \to \fC$ mapping $(a, \al) \mapsto a$.
	Moreover, since $U \cmp F = \id{\fC}$, $\fC$ is a retract of $\arex{\fC}$.
	Also note that $\al\maps (a, \al) \to (a', \id{a'})$ for every $\arex{\fC}$-object $\al\maps a \to a'$, so $\fC \subs \arex{\fC}$ is cofinal.
	Finally note that the notation $\fC(\al, b)$ used as a shortcut for $\fC(a', b) \cmp \al$ in the previous sections really corresponds to the actual homset $\arex{\fC}(\al, \beta)$ where $\dom(\beta) = b $.
\end{df}

\begin{prop}
	A $\fC$-arrow $\al\maps a \to a'$ is amalgamable if and only if it is an amalgamable object in $\arex{\fC}$.
	Hence, $\fC \cup \am(\arex{\fC})$ is an amalgamation extension of $\fC$.
	
	\begin{proof}
		For the first part it is enough to translate amalgamation spans (i.e. diagrams of the form $b \leftarrow a \to c$) and their solutions between $\fC$ and $\arex{\fC}$, as shown in Figure~\ref{fig:spanss} for the first implication.
		If $\al$ is an amalgamable arrow in $\fC$, $\beta\maps b \to b'$ and $\gamma\maps c \to c'$ are $\arex{\fC}$ objects, and $f\maps \al \to \beta$ and $g\maps \al \to \gamma$ are $\arex{\fC}$-arrows, then there are $\fC$-arrows $\tilde{f}\maps a' \to b$ and $\tilde{g}\maps a' \to c$ such that $f = \tilde{f} \cmp \al$ and $g = \tilde{g} \cmp \al$ (or one of $f, g$ is an identity arrow, in which case the amalgamation is trivial),
		and there are $\fC$-arrows $f'$ and $g'$ to a $\fC$-object $d$ such that $f' \cmp \beta \cmp \tilde{f} \cmp \al = g' \cmp \gamma \cmp \tilde{g} \cmp \al$.

\begin{figure}[ht]
	\centering
	
	\begin{tikzcd}
		& (b, \beta) \arrow[dr, "f' \cmp \beta", dashed] \\
		(a, \al)  \arrow[ur, "f"] \arrow[dr, "g"'] & & d \\
		& (c, \gamma) \arrow[ur, "g' \cmp \gamma"', dashed]
	\end{tikzcd}
	\qquad
	\begin{tikzcd}
		& & b \arrow[r, "\beta"] & b' \arrow[dr, "f'", dashed] \\
		a \arrow[r, "\al"] \arrow[urr, "f"] \arrow[drr, "g"']
			& a' \arrow[ur, "\tilde{f}"', dashed] \arrow[dr, "\tilde{g}", dashed]
			& & & d \\
		& & c \arrow[r, "\gamma"] & c' \arrow[ur, "g'"', dashed]
	\end{tikzcd}
	
	\caption{A span in $\arex{\fC}$ and the corresponding span in $\fC$.}
	\label{fig:spanss}
\end{figure}

		If $\al$ is an amalgamable object in $\arex{\fC}$ and $f\maps a' \to b$ and $g\maps a' \to c$ are $\fC$-arrows, then $f \cmp \al\maps \al \to b$ and $g \cmp \al\maps \al \to c$, and so there is a $\arex{\fC}$-object $(d, \delta)$ and $\arex{\fC}$-arrows $f'\maps b \to \delta$ and $g'\maps c \to \delta$ such that $f' \cmp f \cmp \al = g' \cmp g \cmp \al$.
		Since $f'$ and $g'$ may be also viewed as $\fC$-arrows $b \to d$ and $c \to d$, we are done.
		
		It follows that $\fC \cup \am(\arex{\fC})$ is and amalgamation extension of $\fC$.
		$\fC$ is full and cofinal in $\fC \cup \am(\arex{\fC})$ since it is so in $\arex{\fC}$.
		Every object in $\am(\arex{\fC})$ is amalgamable in $\fC \cup \am(\arex{\fC})$ since $\fC \cup \am(\arex{\fC})$ is full and cofinal in $\arex{\fC}$.
		Finally, every amalgamable arrow $\al\maps a \to a'$ in $\fC$ factorizes as $g \cmp f$ where $f\maps a \to \al$ corresponds to $\id{a}$ and $g\maps \al \to a'$ corresponds to $\al$.
	\end{proof}
\end{prop}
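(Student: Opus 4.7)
The plan is to set up a dictionary between amalgamation spans in $\fC$ at the arrow $\al$ and amalgamation spans in $\arex{\fC}$ at the object $\al$, using the defining fact that every non-identity $\arex{\fC}$-arrow $f \maps (a,\al) \to (b,\beta)$ is exactly an $\fC$-arrow $a \to b$ factoring as $\tilde{f} \cmp \al$ for some $\tilde{f} \maps a' \to b$, and that this factorization is essentially unique once $\al$ is fixed on the left. Once this translation is fluent, each direction of the equivalence becomes a matter of chasing arrows around a diagram, and the identity-arrow edge case built into the definition of $\arex{\fC}$ will be handled trivially (an amalgamation span one of whose legs is an identity is always solvable by using the other leg).

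For the forward direction, I would take an amalgamation span $f \maps (a,\al) \to (b,\beta)$, $g \maps (a,\al) \to (c,\gamma)$ in $\arex{\fC}$. Writing $f = \tilde{f} \cmp \al$ and $g = \tilde{g} \cmp \al$ for $\fC$-arrows $\tilde{f} \maps a' \to b$, $\tilde{g} \maps a' \to c$, I would apply amalgamability of $\al$ in $\fC$ to the cospan $\beta \cmp \tilde{f}$, $\gamma \cmp \tilde{g}$ (from $a'$) to obtain $\fC$-arrows $f'' \maps b' \to d$, $g'' \maps c' \to d$ with $f'' \cmp \beta \cmp \tilde{f} \cmp \al = g'' \cmp \gamma \cmp \tilde{g} \cmp \al$. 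Then $f'' \cmp \beta$ and $g'' \cmp \gamma$ serve as $\arex{\fC}$-arrows from $\beta$ and $\gamma$ into the $\arex{\fC}$-object $(d, \id{d})$, completing the amalgamation at $(a,\al)$.

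For the backward direction, given a cospan $f \maps a' \to b$, $g \maps a' \to c$ in $\fC$, the composites $f \cmp \al$ and $g \cmp \al$ become $\arex{\fC}$-arrows $(a,\al) \to (b,\id{b})$ and $(a,\al) \to (c,\id{c})$. Amalgamating these as $\arex{\fC}$-arrows yields maps into some $(d,\delta)$ whose underlying $\fC$-arrows solve the original $\fC$-span after precomposition with $\al$. The "hence" clause then follows with almost no extra work: $\fC$ is full and cofinal in $\arex{\fC}$ (hence in $\fC \cup \am(\arex{\fC})$), amalgamability of objects from $\am(\arex{\fC})$ is preserved in the smaller full cofinal supercategory, and any amalgamable $\fC$-arrow $\al \maps a \to a'$ factorizes through its own associated $\arex{\fC}$-object as the composite of the arrows $a \to (a,\al)$ (given by $\id{a}$) and $(a,\al) \to a'$ (given by $\al$), both legitimate $\arex{\fC}$-arrows.

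The main obstacle is essentially bookkeeping: keeping straight which composite is being read in which category, and making sure that the target $\arex{\fC}$-objects of the amalgam are chosen so that the required $\arex{\fC}$-arrow conditions (factorization through the target arrow, or being an identity) are satisfied. Choosing identity arrows as codomains on the $\arex{\fC}$-side whenever possible sidesteps this cleanly. Beyond that, no non-routine step is expected.
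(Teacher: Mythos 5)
Your proof is correct and follows essentially the same route as the paper: both directions are handled by the same translation of spans (writing $\arex{\fC}$-arrows out of $(a,\al)$ as $\tilde f\cmp\al$, amalgamating the cospan $\beta\cmp\tilde f$, $\gamma\cmp\tilde g$ over $\al$ in $\fC$, and conversely sending a $\fC$-cospan out of $a'$ to the $\arex{\fC}$-arrows $f\cmp\al$, $g\cmp\al$), and the ``hence'' clause is argued identically. No substantive differences to report.
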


\begin{prop}
	A $\fC$-arrow $\al\maps a \to a'$ is Ramsey if and only if it is a Ramsey object in $\arex{\fC}$, i.e. if $\id{\al}$ is a Ramsey arrow in $\arex{\fC}$.
	
	\begin{proof}
		We have already observed that $\fC(\al, b) = \arex{\fC}(\al, \beta)$ for every $\fC$-arrow $\beta$ with domain $b$.
		Now the difference between the situation in $\fC$ and $\arex{\fC}$ is that in $\arex{\fC}$ more objects are allowed for $b$ as well as for the Ramsey witnessing object $v$.
		But that may be overcome by the fact that $\fC$ is cofinal in $\fC'$.
		
		Suppose $\al$ is a Ramsey arrow in $\fC$.
		Let $\beta\maps b \to b'$ be a $\arex{\fC}$-object, let $F \subs \arex{\fC}(\al, \beta)$ be finite, and let $k \in \omega$.
		There is a $\arex{\fC}$-arrow $f\maps \beta \to b'$ and a $\fC$-object $v$ such that for every coloring $\phi\maps \fC(\al, v) \to k$ there is a $\fC$-arrow $g\maps b' \to v$ such that $\phi$ is constant on $g \cmp f \cmp F$.
		
		On the other hand, suppose that $\al$ is a Ramsey object in $\arex{\fC}$.
		For every $\fC$-object $b$, finite $F \subs \fC(\al, b)$, and $k \in \omega$ there is a witnessing $\arex{\fC}$-object $\gamma\maps v \to v'$ and a $\arex{\fC}$-arrow $f\maps \gamma \to v'$.
		Every coloring $\phi\maps \fC(\al, v') \to k$ induces the coloring $\psi\maps \arex{\fC}(\al, \gamma) \to k$ by $\psi(g) := \phi(f \cmp g)$.
		Hence there is a $\arex{\fC}$-arrow $g\maps b \to \gamma$ such that $\psi$ is constant on $g \cmp F$, and so $\phi$ is constant on $f \cmp g \cmp F$, where $(f \cmp g)\maps b \to v'$.
	\end{proof}
\end{prop}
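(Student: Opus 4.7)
First I would record the dictionary that drives everything: for any $\arex{\fC}$-object $\beta\maps b \to b'$, the hom-set $\arex{\fC}(\al, \beta)$ coincides, as a set of $\fC$-arrows $a \to b$, with the shorthand $\fC(\al, b) = \fC(a', b) \cmp \al$. That is, the $\arex{\fC}$-hom-set out of $\al$ depends only on the domain of the target object, not on $\beta$ itself. Dually, an $\arex{\fC}$-arrow $e\maps \beta \to \gamma$ with $\gamma\maps v \to v'$ is a $\fC$-arrow $b \to v$ that factors through $\beta$ (plus the degenerate identity case $\beta = \gamma$). These two identifications are what make the two Ramsey conditions comparable.

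For the ``$\Rightarrow$'' direction, assume $\al$ is a Ramsey arrow in $\fC$. Given a test $\arex{\fC}$-object $\beta\maps b \to b'$, a finite $F \subs \arex{\fC}(\al, \beta) = \fC(\al, b)$, and $k \in \omega$, I would \emph{not} apply the Ramsey property of $\al$ directly at $b$; instead, I would shift $F$ to $\beta \cmp F \subs \fC(\al, b')$ and apply the Ramsey property of $\al$ in $\fC$ at $b'$, $\beta \cmp F$, $k$. This produces a $\fC$-object $v$ such that every $k$-coloring of $\fC(\al, v)$ is monochromatic on $e' \cmp \beta \cmp F$ for some $\fC$-arrow $e'\maps b' \to v$. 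I would declare $\id{v}$ to be the witnessing $\arex{\fC}$-object; then $\arex{\fC}(\al, \id{v}) = \fC(\al, v)$, and setting $e := e' \cmp \beta$ yields an admissible $\arex{\fC}$-arrow $\beta \to \id{v}$, which by construction factors through $\beta$ and is monochromatic on $e \cmp F$.

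The ``$\Leftarrow$'' direction is essentially unpacking: given $b \in \ob{\fC}$, finite $F \subs \fC(\al, b)$, and $k$, apply the Ramsey property of $\al$ in $\arex{\fC}$ to the test object $\id{b}$, noting $F \subs \arex{\fC}(\al, \id{b}) = \fC(\al, b)$. The resulting $\arex{\fC}$-witness $\gamma\maps v \to v'$ supplies the desired $\fC$-object $v$, since $\arex{\fC}(\al, \gamma) = \fC(\al, v)$ and any $\arex{\fC}$-arrow $\id{b} \to \gamma$ is just a $\fC$-arrow $b \to v$ (factorization through $\id{b}$ being vacuous). The whole proof is bookkeeping along the dictionary above; the only nontrivial move is the preconcatenation with $\beta$ in the forward direction, without which the witnessing arrow would not factor through $\beta$ and therefore would not be a legal $\arex{\fC}$-arrow. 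I do not expect any genuine obstacle.
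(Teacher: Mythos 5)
Your proof is correct and follows essentially the same route as the paper: both directions rest on the identification $\arex{\fC}(\al,\beta)=\fC(\al,\dom(\beta))$, and in the forward direction you precompose with $\beta$ exactly as the paper does (its arrow $f\maps\beta\to b'$ is just $\beta$ itself, viewed as an $\arex{\fC}$-arrow $\beta\to\id{b'}$). The only cosmetic difference is in the converse: you take the domain $v$ of the $\arex{\fC}$-witness $\gamma\maps v\to v'$ directly as the $\fC$-witness, whereas the paper takes the codomain $v'$ and pulls colorings back along $\gamma$; both work, and yours is marginally shorter.
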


Together, we obtain the following.
\begin{wn}
	Let $\fC$ be a category with the weak Ramsey property and the weak amalgamation property.
	Then $\am(\arex{\fC})$ has Ramsey property and the amalgamation property, and both $\fC$ and $\am(\arex{\fC})$ are full cofinal subcategories of $\arex{\fC}$.
\end{wn}

\section{Applications}

We demonstrate the theory developed in the previous sections on several examples.
Two extreme kinds of categories -- in the sense of having degenerate hom-sets and degenerate class of objects, respectively -- are posets and monoids.
Recall that every poset $(P, \leq)$ may be regarded as a category $\fC$ with $\ob{\fC} = P$ and $\fC(x, y)$ being a singleton if $x \leq y$, and empty otherwise.
In general, a category $\fC$ such that every hom-set $\fC(a, b)$ is empty or a singleton is called \emph{thin}.
Clearly, every thin category has the Ramsey property.

\begin{df}
	We say that a category $\fC$ is \emph{weakly thin} if for every $\fC$-object $a$ there is a $\fC$-arrow $\al\maps a \to a'$ such that for every $\fC$-object $b$ we have $\card{\fC(a', b) \cmp \al} \leq 1$, i.e. there is at most one arrow $a \to b$ that factorizes through $\al$.
	
	Clearly, every weakly thin category has the weak Ramsey property.
\end{df}

\begin{ex}
	Let $\fC$ be the category of all finite graphs whose all cycles have pairwise disjoint sets of vertices and have different lengths, with all embeddings as morphisms.
	Then $\fC$ is a weakly thin (and so having the weak Ramsey property) hereditary class without the Ramsey property, though it is not weak Fraïssé since it is not directed and does not have the weak amalgamation property.
	
	\begin{proof}
		For every graph $G$ in $\fC$ we describe an inclusion $\al\maps G \to G'$ to a bigger graph $G'$ in $\fC$ such that every vertex of $G$ is definable (by an existential formula without parameters) in $G'$ as well as in every extension of $G'$ in $\fC$ (by the same formula across the extensions).
		Every cycle is definable (as a set) because of unique lengths.
		Since cycles are disjoint, there is at most one edge connecting two fixed cycles, and so the endpoints of that edge are definable.
		Finally, if at least two non-antipodal vertices on a cycle are definable, all vertices on the cycle are definable.
		Hence, to form $G'$ it is enough to add cycles and paths so that every vertex of $G$ is covered by one of the cases above.
		
		The category $\fC$ does not have the Ramsey property since its objects are not rigid – e.g. a cycle $C$ has non-trivial automorphisms and for every $G$ in $\fC$ we can color $\fC(C, G)$ so that every two different embeddings with the same image get different colors.
		
		The category $\fC$ is not directed and does not have the weak amalgamation property: let $C_n$ denote a cycle of length $n \geq 3$ and let $G_{a, b, c}$, $a \neq b \neq c \geq 3$, be a graph consisting of $C_a$ and $C_c$ both joined by an edge to the same vertex in $C_b$.
		Then $G_{a, b, c}$ and $G_{b, c, a}$ can never be jointly embedded into a graph in $\fC$.
		Since every $\fC$-object $H$ can be extended to $H \cup G_{a, b, c}$ and $H \cup G_{b, c, a}$ for suitably large $a, b, c$, $\fC$ does not have the weak amalgamation property.
	\end{proof}
\end{ex}

We shall look at monoids in the next section.

\subsection{Monoids as categories}

Recall that a \emph{monoid} is a triple $(M, \cdot, 1)$ where $\cdot$ is an associative operation on a set $M$, and $1 \in M$ is the unit: we have $x \cdot 1 = 1 \cdot x = x$ for every $x \in M$.
A monoid $M$ can be viewed as a category with a single object: the elements of $M$ become the endomorphisms, the multiplication $\cdot$ becomes the composition, and the unit $1$ becomes the identity on the single unnamed object.

Then, an element $\al \in M$ corresponds to a Ramsey arrow if and only if for every $k \in \nat$, for every finite $F \subs M\al$, and for every $\phi\maps M\al \to k$ there exists $e \in M$ such that $\phi\restr{eF}$ is constant.
The monoid $M$ has the weak Ramsey property if and only if there exists a Ramsey arrow, and $M$ has the Ramsey property if and only if the unit (and so every element) is a Ramsey arrow.

\begin{df}
	We say that an element $\al \in M$ of a monoid satisfies the \emph{left equalization condition} (LE) if for every finite $F \subs M\al$ there exists $e \in M$ such that $eF$ is a singleton.
	Since we may assume that $\al \in F$, this is equivalent to $eF = \set{e\al}$.
	Also, by induction, (LE) is equivalent to the property that for every $x, y \in M\al$ there is $e \in M$ such that $ex = ey$.
	This is because for $x, y, z \in M\al$ there is $e \in M$ such that $ex = ey$ and $e' \in M$ such that $e'(ey) = e'(ez)$.
	It follows that $e'e\set{x, y, z}$ is a singleton.
\end{df}

Note that if an element $\al \in M$ satisfies (LE), then it is a Ramsey arrow.
At several cases, the converse is true as well.

\begin{prop}
	Let $M$ be a monoid and $\al \in M$.
	Let $F$ be the submonoid $\set{f \in M: M\al f \subs M\al}$,
	and let $G$ be the graph of the right action of $F$ on $M\al$, i.e. the set of vertices is $M\al$, and we put an edge $x \mapsto_f xf$ for every $x \in M\al$ and $f \in F$.
	If $G$ has finitely many undirected components, then $\al$ is a Ramsey arrow if and only if it satisfies (LE).
	This includes the following cases: (i) $M$ is finite, (ii) $M$ is commutative, (iii) $\al$ is left-invertible, in particular if $\al$ is the unit or $M$ is a group.
	
	\begin{proof}
		Suppose that $\al$ is a Ramsey arrow.
		First we show that if $x, y$ lie in the same component of $G$, then there is $e \in M$ such that $ex = ey$.
		By induction, it is enough to consider the case $y = xf$ for some $f \in F$.
		Let $G_f$ be the subgraph of $G$ of the right action of $f$ on $M\al$, i.e. we consider only the edges $x \mapsto xf$ for every $x \in M\al$.
		For every $x \in M\al$ the path $(xf^n)_{n \in \omega}$ in $G_f$ is an infinite ray or finite cycle with a finite initial segment attached.
		We may inductively define a coloring $\phi\maps M\al \to 3$ such that $\phi(x) \neq \phi(xf)$ unless $x = xf$.
		We need the third color only because of possible cycles of odd length.
		Since $\al$ is a Ramsey arrow, there is $e \in M$ such that $\phi(ex) = \phi(exf)$, and so $ex = exf$.
		
		Now let $x, y \in M\al$ be arbitrary.
		Since $G$ has finitely many components, there is a coloring $\psi\maps M\al \to k$ for some $k \in \omega$, such that points from different components take different colors.
		Since $\al$ is a Ramsey arrow, there is $e \in M$ such that $\psi(ex) = \psi(ey)$, and so $ex$ and $ey$ lie in the same component.
		So by the previous claim, there is $e' \in M$ such that $e'ex = e'ey$.
		
		It remains to show that $G$ has finitely many components in the cases (i), (ii), (iii).
		This is clear if $M$ is finite.
		If $M$ is commutative, then $G$ has only one component: every $x \in M\alpha$ is of the form $f\al = \al f$ for some $f \in M = F$, and hence $x$ is in the component of $\al$.
		Similarly, if $M\al = M \ni 1$, then $F = M$ and so $M\al = 1 F$, and again $G$ has only one component.
	\end{proof}
\end{prop}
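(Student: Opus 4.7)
The plan is to prove the two implications separately. The reverse direction (LE)\,$\implies$\,Ramsey was observed in the paragraph preceding the statement: given a finite subset $S \subs M\al$ and a coloring $\phi\maps M\al \to k$, applying (LE) to $S$ itself yields $e \in M$ with $eS$ a singleton, on which $\phi$ is trivially constant. So the substantive content is to derive (LE) from the Ramsey property under the finite-component hypothesis. For this I plan a two-step strategy that parallels the inductive argument used earlier to show the pairwise and general formulations of (LE) coincide: first equalize pairs $x, y \in M\al$ that lie in the same undirected component of $G$, and then use a coloring by component index to push an arbitrary pair into a common component with another application of Ramsey.

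For the first step, fix $x, y \in M\al$ in the same undirected component of $G$. By induction on the length of an undirected path connecting them, it suffices to handle the case where a single edge joins them, i.e.\ $y = xf$ for some $f \in F$ (the case $x = yf$ reduces by swapping the roles of $x$ and $y$). Consider the subgraph $G_f \subs G$ consisting only of the edges $z \mapsto zf$. Each of its components is an orbit of right multiplication by $f$, structured either as an infinite forward ray or as a finite ``lollipop''—a (possibly empty) tail leading into a finite cycle. The key is to build a coloring $\phi\maps M\al \to 3$ with the property that $\phi(z) \ne \phi(zf)$ whenever $zf \ne z$: colors $0$ and $1$ alternate along rays and along cycles of even length, and color $2$ is used only to break parity at a single vertex of an odd cycle. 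Applying the Ramsey property of $\al$ to the two-element set $\set{x, y}$ with this coloring produces $e \in M$ with $\phi(ex) = \phi(e(xf))$, and the defining property of $\phi$ then forces $ex = e(xf) = ey$.

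For the second step, the hypothesis that $G$ has only finitely many undirected components is used essentially: let $\psi\maps M\al \to k$ assign to each vertex the index of its component, where $k$ is the total number of components. Applying the Ramsey property to $\set{x, y}$ with $\psi$ gives $e \in M$ such that $ex$ and $ey$ lie in a common component, after which the first step produces $e' \in M$ with $e'ex = e'ey$. The three special cases then reduce to a direct check that $G$ has finitely many components: this is trivial when $M$ is finite; when $M$ is commutative, $F = M$ since $M\al f = Mf\al \subs M\al$, and every $m\al \in M\al$ is joined to $\al$ by the single $F$-edge $\al \mapsto_m m\al$, so $G$ is connected; when $\al$ is left-invertible, $1 \in M\al$ forces $M\al = M$ and $F = M$, and again every element lies in the component of $1$.

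The main obstacle I foresee is the $3$-coloring construction in the first step. It must be defined consistently on every $f$-orbit simultaneously, and the argument must accommodate the full range of orbit shapes—infinite rays, pure cycles of arbitrary parity, and lollipops in which the tail alternation has to be compatible with the chosen cycle coloring. The parity failure on odd cycles is precisely what forces the introduction of the third color, and it is where the combinatorial content of the proof concentrates; everything else is bookkeeping once one believes that three colors always suffice.
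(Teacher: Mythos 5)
Your proposal is correct and follows essentially the same route as the paper's proof: the same reduction to a single $F$-edge via induction along an undirected path, the same $3$-coloring of the functional graph of right multiplication by $f$ (with the third color absorbing odd cycles), the same component-indexing coloring exploiting the finiteness hypothesis, and the same verifications in cases (i)--(iii). The converse direction (LE)\,$\implies$\,Ramsey is, as you note, the trivial observation already recorded in the paper just before the statement.
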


\begin{wn}
	A monoid $M$ has the Ramsey property if and only if for every $x, y \in M$ there is $e \in M$ such that $ex = ey$.
\end{wn}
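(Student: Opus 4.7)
The plan is to derive this corollary as a direct specialization of the preceding proposition to the case $\al = 1$. Recall that a monoid $M$ is viewed as a category with one object, so the identity arrow is the unit $1 \in M$. By definition, $M$ (as a category) has the Ramsey property if and only if every identity arrow is a Ramsey arrow, i.e.\ if and only if the single identity $1$ is a Ramsey arrow.

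Next I would invoke the proposition with the choice $\al = 1$. Since $1$ is left-invertible (it is its own left inverse), case (iii) of the proposition applies, so the hypothesis on the graph $G$ having finitely many components is automatic. Hence $1$ is a Ramsey arrow if and only if it satisfies the left equalization condition (LE).

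Finally, I would unwind the definition of (LE) in this special case. Since $M \cdot 1 = M$, condition (LE) for $\al = 1$ asserts: for every finite $F \subs M$ there exists $e \in M$ such that $eF$ is a singleton. As noted in the definition, by induction this is equivalent to the two-element version, namely that for every $x, y \in M$ there exists $e \in M$ with $ex = ey$. Combining these equivalences yields precisely the stated corollary; there is no real obstacle here, as the whole argument amounts to plugging $\al = 1$ into the already-established proposition and simplifying.
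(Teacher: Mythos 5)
Your proposal is correct and is exactly the argument the paper intends: the corollary is stated without proof precisely because it is the specialization of the preceding proposition to $\al = 1$ (covered explicitly by case (iii), ``$\al$ is the unit''), combined with the observation that the Ramsey property of the one-object category $M$ means that the unit is a Ramsey arrow and that (LE) for the unit reduces to the two-element condition. No gaps.
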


\begin{ex}
	A monoid with a \emph{left zero}, i.e. an element $0$ such that $0 \cdot x = 0$ for every $x$, has the Ramsey property.
	A monoid with a \emph{right zero} has the weak Ramsey property, since a right zero is a Ramsey arrow.
	In fact, a monoid with a right zero is a weakly thin category.
	
	But not every monoid with the Ramsey property has a left zero, and not every monoid with the weak Ramsey property has a right zero.
	For example, consider the commutative monoid on $\omega$ given by $a\cdot b = \max\set{a, b}$.
	It is clear this monoid satisfies the condition of the above corollary, however, there is no left or right zero.
	The infinitude of $\omega$ is a necessity for this which we see in the next observation.
\end{ex}

\begin{observation}
	A finite monoid is Ramsey if and only if it has a left zero.
	
	\begin{proof}
		It suffices to show that a finite monoid with the Ramsey property has a left zero.
		There is are $e, s\in M$ such that $e M = \{s\}$.
		Since $M$ has the unit, we have that $e = s$ is a left zero.
	\end{proof}
\end{observation}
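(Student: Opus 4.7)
The plan is to prove both directions, with the left-zero-implies-Ramsey direction being immediate from the preceding example (a left zero $0$ trivially equalizes any set via $0 \cdot F = \{0\}$, hence satisfies (LE), hence is a Ramsey arrow, and by the corollary this suffices). So the real content is the forward direction: if a finite monoid $M$ has the Ramsey property, then it admits a left zero.

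First I would invoke the corollary immediately preceding the statement: $M$ being Ramsey means that for every $x,y\in M$ there exists $e\in M$ with $ex=ey$. The next step is to bootstrap this pairwise equalization to simultaneous equalization of an arbitrary finite set. This is exactly the induction already spelled out in the definition of condition (LE): given $x,y,z \in M$, pick $e$ equalizing $x,y$ and then $e'$ equalizing $ey, ez$, so that $e'e$ equalizes $\{x,y,z\}$; iterate. Since $M$ is finite, we may apply this with $F = M$ itself, obtaining $e \in M$ and $s \in M$ such that $eM = \{s\}$.

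The punchline is then a one-line observation exploiting the unit: since $1 \in M$, we have $e = e \cdot 1 \in eM = \{s\}$, hence $e = s$. Therefore $eM = \{e\}$, which is precisely the statement that $e$ is a left zero of $M$.

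The only conceivable obstacle is the bootstrap step, but it requires no new idea — it is precisely the argument already given in the text for the equivalence of (LE) with its pairwise version. Finiteness of $M$ is used exclusively in the single place where we apply simultaneous equalization to $F = M$; the unit is used exclusively to force $e = s$. Both ingredients are essential, which also clarifies why the preceding example with $(\omega, \max)$ is Ramsey without possessing a left zero.
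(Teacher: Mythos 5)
Your proof is correct and follows essentially the same route as the paper: the paper's one\nobreakdash-line argument also obtains $e$ with $eM=\{s\}$ (implicitly via the pairwise equalization from the preceding corollary, bootstrapped over the finite set $F=M$) and then uses $e=e\cdot 1\in eM$ to conclude $e=s$ is a left zero. You have merely made explicit the steps the paper leaves implicit.
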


\begin{ex}
	Let $M$ be the monoid $\set{x^n, 0x^n: n \in \omega}$ where the operation is the concatenation of words with discarding everything to the left of an occurrence of $0$, i.e. $M$ is the free monoid generated by $x$ with a right zero $0$ freely added.
	The monoid $M$ has the weak Ramsey property since it has a right zero, but $M$ does not have the Ramsey property since there is no $e \in M$ such that $e = ex$.
\end{ex}

\begin{ex}
	In a left-cancellative monoid $M$, i.e. a monoid whose elements are monomorphisms, an element $\al$ satisfies (LE) if and only if it is a right zero.
	This is because if $x\al \neq \al$ for some $x \in M$, then there is no $e \in M$ such that $ex\al = e\al$.
	At the same time, $M$ has no idempotents but the unit, and a right zero is an idempotent.
	It follows that no non-trivial left-cancellative monoid has an element satisfying (LE).
	In particular, it cannot have the Ramsey property, and if $M$ is a group or is commutative, then it has not even the weak Ramsey property since in those cases Ramsey arrows satisfy (LE).
\end{ex}

\begin{question}
	Is there a non-trivial left-cancellative monoid with the weak Ramsey property?
\end{question}

\begin{ex}
	A non-trivial free monoid $M$ does not have the weak Ramsey property.
	We can explicitly construct a coloring: let $x \in M$ be a generator, and let $\phi\maps M \to 2$ be the parity of the number of occurrences of $x$ in a given word.
	Then for every $\al, e \in M$ we have $\phi(ex\al) \neq \phi(e\al)$, and so $\al$ is not a Ramsey arrow.
\end{ex}

\begin{df}
	For every monoid $M$ we may define the \emph{left absorption relation} $x \geq y$ by $x = xy$ for $x, y \in M$.
	The relation is transitive: if $x = xy$ and $y = yz$, then $x = xy = x(yz) = (xy)z = xz$.
	We have $x \leq x$ if and only if $x$ is an idempotent, so $\leq$ is a genuine pre-order if and only if $M$ consists of idempotents.
	Nevertheless, we shall use the pre-order terminology even in the general case.
	
	The unit $1$ is the unique minimum: clearly it is a minimum, and if $x \leq 1$, then $1 = 1x = x$.
	The condition (LE) for an element $\al \in M$ says that $(M\al, \leq)$ is directed.
	Note that in a left-cancellative monoid the relation $\leq$ is trivial: all elements $x \neq 1$ are non-idempotent pairwise incomparable maxima.
	If $x \in M$ is an idempotent, then $ex \geq x$ for every $e \in M$ since $(ex)x = e(xx) = ex$.
\end{df}

\begin{prop}
	An element $\al$ of a monoid $M$ of idempotents is a Ramsey arrow if and only if it satisfies (LE) if and only if it is amalgamable.
	
	\begin{proof}
		The left absorption relation is a pre-order, and for every $x, y \in M\al$ we have that $Mx$ and $My$ are $\leq$-upper subsets of $M\al$.
		Suppose that $\al$ is a Ramsey arrow.
		We prove that $Mx \cap My \neq \emptyset$.
		Otherwise, there is a coloring $\phi\maps M\al \to 2$ such that $Mx$ and $My$ are colored by different colors.
		Since $\al$ is Ramsey, there is $e \in M$ such that $\phi(Mx) \ni \phi(ex) = \phi(ey) \in \phi(My)$, which is a contradiction.
		Hence, $x, y \in M\al$ have an upper bound, so we have (LE).
		
		Clearly, if $\al \in M$ satisfies (LE), then it is amagamable.
		On the other hand, if for $x, y \in M\al$ there are elements $x', y' \in M$ such that $x'x = y'y =: e$, then $ex = ey$.
	\end{proof}
\end{prop}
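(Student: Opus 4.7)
The plan is to establish the cycle (LE) $\Rightarrow$ Ramsey $\Rightarrow$ amalgamable $\Rightarrow$ (LE), invoking the idempotency hypothesis only in the last implication.

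The first implication is essentially by definition and was already observed in the paragraph following the definition of (LE): if some $e \in M$ makes $eF$ a singleton, then every coloring is trivially constant on $eF$. No hypothesis on $M$ is used.

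For Ramsey $\Rightarrow$ amalgamable, I would argue directly by contradiction. Given $x, y \in M\al$, suppose $Mx \cap My = \emptyset$. Since $Mx$ is closed under left multiplication by $M$, define $\phi\maps M\al \to 2$ by $\phi(z) = 0$ if $z \in Mx$ and $\phi(z) = 1$ otherwise. Applying (wR) to $F = \set{x, y}$ with this $\phi$ produces $e \in M$ with $\phi(ex) = \phi(ey)$; but $ex \in Mx$ while $ey \in My \subs M\al \setminus Mx$, so they carry different colors, a contradiction. Hence $Mx \cap My \nnempty$, which unpacks to amalgamability of $\al$: writing $x = f\al$ and $y = g\al$, the intersection supplies $f', g' \in M$ with $f' \cmp f \cmp \al = g' \cmp g \cmp \al$.

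The main content is the last implication, amalgamable $\Rightarrow$ (LE), where idempotency becomes essential. Given $x, y \in M\al$, amalgamability yields $x', y' \in M$ with $x'x = y'y$. Setting $e := x'x = y'y$ and using that $x^2 = x$ and $y^2 = y$, one computes $ex = x'x \cdot x = x'x^2 = x'x = e$ and symmetrically $ey = e$, so $ex = ey$. This establishes (LE) for pairs, and as the definition of (LE) already records, induction extends this to arbitrary finite subsets of $M\al$.

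The only substantive obstacle is precisely this last step: the idempotency hypothesis cannot be dropped there, since the earlier examples in this section (nontrivial groups, and other left-cancellative monoids) exhibit situations in which amalgamability holds while (LE) fails completely.
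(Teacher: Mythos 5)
Your proof is correct and uses essentially the same two ingredients as the paper's: the two-colouring that separates $Mx$ from $My$ to show a Ramsey arrow forces $Mx \cap My \nnempty$ (which is exactly amalgamability), and the idempotency computation $ex = x'x\cdot x = x'x = e = ey$ to pass from amalgamability to (LE). The only difference is organizational — you arrange the implications as a single cycle (LE) $\Rightarrow$ Ramsey $\Rightarrow$ amalgamable $\Rightarrow$ (LE), whereas the paper proves Ramsey $\Rightarrow$ (LE) directly and then (LE) $\Leftrightarrow$ amalgamable — but the content is the same.
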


\begin{ex}
	Let $M$ be a monoid of idempotents.
	If $M$ is also commutative, then $xy = yx$ is the supremum of $\set{x, y}$: clearly $xy$ is an upper bound since $(xy)x = y(xx) = yx = xy$ and similarly for $y$.
	The fact that for $x, y \leq z$ we have also $xy \leq z$ holds in general: we have $z = zx = zy$, and so $z = zy = (zx)y = z(xy)$.
	Moreover, $\leq$ is an order: if $x \leq y \leq x$, then $x = xy = yx = y$.
	Together, a commutative monoid of idemponents with the left absorption order becomes a join semilattice with the minimum $1$.
	Conversely, every join semilattice with a minimum is a monoid.
	We shall call these \emph{semilattice monoids}.
	Since the order is directed (we have even suprema), semilattice monoids have the Ramsey property.
\end{ex}

\begin{ex}
	Another special class of monoids of idempotents are \emph{left-zero monoids}, i.e. monoids $M$ such that for every $x, y \in M \setminus \set{1}$ we have $xy = x$.
	This means that the left absorption pre-order trivializes: all elements $x \neq 1$ are $\leq$-equivalent maxima.
	Hence, left-zero monoids have the Ramsey property.
	Similarly, a \emph{right-zero monoid} consists of the unit and of right zeros, and so has the weak Ramsey property.
	In this case, if $x \geq y \neq 1$, then $x = xy = y$, so non-unit elements are pairwise incomparable maxima.
	Hence, non-trivial right-zero monoids do not have the Ramsey property.
\end{ex}

\begin{question}
	Is being a Ramsey arrow equivalent to (LE) in a general monoid?
\end{question}

\subsection{Almost linear orders}

We revisit Pouzet's example (mentioned in a note by Pabion~\cite{Pabion}, see also \cite{WeakEx}) of a weak Fraïssé category without the cofinal amalgamation property and observe that it also has the weak Ramsey property.
The basic idea is to consider the linear order of rationals $(\Qyu, \leq)$, which is known to be a Fraïssé limit and known to have extremely amenable automorphism group, and to interpret it using a different language, namely the ternary relation $R(x, y, z)$ defined by $(x < y) \meet (x < z) \meet (y \neq z)$.

Let $\LO$ denote the category of all linear orders and all embeddings.
Note that an order $P$ is linear if and only if every two-element subset $\set{x, y} \subs P$ has a minimum.
For the sake of this example we say that an order $P$ is \emph{almost linear} if every three-element subset $\set{x, y, z} \subs P$ has a minimum.
We define $\aLO$ to be the category of all almost linear orders and all one-to-one homomorphisms.
Note that every one-to-one homomorphism $f\maps P \to Q$ between orders is already an embedding if $P$ is linear: if $f(x) \leq f(y)$, then either $x \leq y$ and we are done, or $y \leq x$, so $f(y) \leq f(x)$ and $f(x) = f(y)$, and so $x = y$.
It follows that $\LO$ is a full subcategory of $\aLO$.

Let us view ordinals as linear orders, for example $2 = \set{0 < 1}$, and for orders $X, Y$ let $X + Y$ denote the ordered sum, i.e. the disjoint union of $X$ and $Y$ with $x < y$ for every $x \in X$ and $y \in Y$, so for example $1 + 2$ is a linear order isomorphic to $3$.
Let $B$ denote an order consisting of two incomparable elements.

\begin{observation} \label{almost_linear_objects}
	Every almost linear order $X$ is either linear or of the form $L + B$ for some linear order $L$.

	\begin{proof}
		For every $x \in X$ we have that $\set{y \in X: y \leq x}$ is linearly ordered since for every $y, z < x$ with $y \neq z$ the set $\set{x, y, z}$ has a minimum, which cannot be $x$.
		Also, for a similar reason, for every incomparable elements $x \neq y \in X$ and $z \in X \setminus \set{x, y}$ we have $z < x, y$.
		Together, there may be at most one pair of incomparable elements, and in that case they are maxima and everything below them is linear.
	\end{proof}
\end{observation}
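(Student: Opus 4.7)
My plan is to assume that $X$ is not linear and derive the structure $L+B$. Since $X$ is not linear, there exist two distinct elements $a, b \in X$ that are incomparable. The main step will be to show that these two incomparable elements are forced to sit at the top and are the only such incomparable pair, after which everything else is easy.

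First I would show that every third element lies strictly below both $a$ and $b$. Given any $z \in X \setminus \{a,b\}$, the three-element set $\{a,b,z\}$ must have a minimum by almost-linearity; since $a$ and $b$ are incomparable, neither of them can serve as the minimum, so $z < a$ and $z < b$. Next I would observe that the pair $\{a,b\}$ is the unique incomparable pair: if $\{a',b'\}$ were another incomparable pair distinct from $\{a,b\}$, then at least one of $a',b'$ would be distinct from both $a$ and $b$, say $a' \notin \{a,b\}$, and by the previous step $a' < a, b$; but applying the same argument with the pair $\{a',b'\}$ we would get $a < a', b'$, yielding $a' < a < a'$, a contradiction.

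Set $L := X \setminus \{a, b\}$. To check that $L$ is linearly ordered, take any two distinct elements $y, z \in L$. Then $\{y, z, a\}$ has a minimum; since $y, z < a$ by the first step, the minimum cannot be $a$, so $y$ and $z$ are comparable. Finally, since every element of $L$ lies below both $a$ and $b$, and $a, b$ are incomparable to each other, the decomposition $X = L + B$ holds with $B = \{a, b\}$.

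I do not expect any real obstacle here; the only subtlety is ensuring that the uniqueness argument for the incomparable pair really covers the case where $\{a',b'\}$ shares one element with $\{a,b\}$, but that is handled by the same minimum-in-a-triple argument applied to the element outside $\{a,b\}$. The proof is essentially a direct unpacking of the almost-linearity condition on suitably chosen triples.
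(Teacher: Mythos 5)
Your proof is correct and follows essentially the same route as the paper: in both cases the key observation is that in a triple $\set{a,b,z}$ with $a,b$ incomparable the minimum must be $z$, which forces every other element strictly below both and makes the incomparable pair unique and maximal. The small case split you flag (when the second incomparable pair shares an element with $\set{a,b}$) is indeed handled by the same triple argument, so there is no gap.
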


Every almost linear order $X$ with two maxima $x, y$ admits exactly two linear refinements: we decide which of $x, y$ will be greater, and extend the relation appropriately.
The two corresponding homomorphisms will be called \emph{refinements} here.

\begin{observation}
	Every $\aLO$-arrow $f\maps X \to Y$ is either an embedding, or a refinement followed by an embedding.
	
	\begin{proof}
		As observed earlier, if $X$ is linearly ordered, then $f$ is an embedding.
		Otherwise, let $x, y$ be the two maxima of $X$.
		If $f(x), f(y)$ are incomparable, then they are the two maxima of $Y$, and $f$ is again an embedding.
		If $f(x), f(y)$ are comparable, then $f = f' \cmp g$ where $g\maps X \to X'$ is the refinement of $X$ corresponding to the order of $f(x), f(y)$, and the uniquely determined map $f'$ is an embedding.
	\end{proof}
\end{observation}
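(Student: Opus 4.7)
The plan is to split on the structure of $X$ as given by the preceding observation. If $X$ is linearly ordered, then by the remark before Observation~\ref{almost_linear_objects} any one-to-one homomorphism out of $X$ is automatically an embedding, and the first alternative of the conclusion holds trivially. So the real content is when $X$ is not linear, in which case $X = L + B$ with two distinguished incomparable maxima $x, y$ and everything else lying below both of them.

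In that case I would look at the image pair $f(x), f(y) \in Y$. Since $Y$ is itself almost linear, Observation~\ref{almost_linear_objects} restricts how a two-element antichain can sit inside $Y$: the only antichain is the pair of top elements (if $Y$ even has one). Suppose first that $f(x)$ and $f(y)$ are incomparable in $Y$; then they must be the two maxima of $Y$, and for every other $z \in X$ we have $z < x$ and $z < y$, hence $f(z) < f(x)$ and $f(z) < f(y)$, so $f(z)$ lies in the linear part of $Y$. The restriction of $f$ to $L$ is a one-to-one homomorphism from a linear order into a linear order, hence an embedding; combining this with the fact that $f$ sends the antichain $\{x,y\}$ onto the antichain $\{f(x),f(y)\}$ shows that $f$ itself reflects the order, so it is an embedding.

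If instead $f(x)$ and $f(y)$ are comparable in $Y$, say $f(x) < f(y)$ without loss of generality, I would define $g\maps X \to X'$ to be the refinement of $X$ in which $x < y$, and set $f' \defi f$ as a function on underlying sets, now viewed with domain $X'$. Then $f = f' \cmp g$ by construction, so it remains to check that $f'$ is an embedding. The domain $X'$ is now a linear order, and $f'$ is a bijection onto its image that preserves order: for pairs not involving both $x$ and $y$ the orderings in $X$ and $X'$ coincide and $f$ is already a homomorphism, while for the new comparison $x <_{X'} y$ the inequality $f(x) < f(y)$ was our standing assumption. So $f'$ is a one-to-one homomorphism from a linear order and hence an embedding, exactly as required by the second alternative.

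I do not expect any serious obstacle. The only subtlety is making sure that in the incomparable-image case one rules out the possibility that $f(x), f(y)$ fail to be the maxima of $Y$, which is where the structural rigidity of almost linear orders from Observation~\ref{almost_linear_objects} (at most one antichain, sitting on top) is used, together with the observation that a one-to-one homomorphism from a linear order is automatically an embedding.
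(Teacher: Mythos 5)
Your proposal is correct and follows essentially the same route as the paper: the same case split on whether $X$ is linear, the same use of the structural description of almost linear orders to place $f(x), f(y)$ in the incomparable case, and the same factorization through the refinement in the comparable case. You simply spell out a few more of the verifications that the paper leaves implicit.
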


\begin{con}
	For an almost linear order $(X, \leq)$ we define a ternary relation $R$ on $X$ by 
	\[
		R(x, y, z) \letiff (x < y) \meet (x < z) \meet (y \neq z),
	\]
	meaning that $x$ is the minimum of the three-point set $\set{x, y, z}$.
	The relation $R$ satisfies the following properties:
	\begin{enumerate}
		\item $R(x, y, z) \implies \card{\set{x, y, z}} = 3$,
		\item $R(x, y, z) \iff R(x, z, y)$,
		\item $R(x, y, w) \meet R(y, z, w') \implies R(x, z, w')$,
		\item $\card{\set{x, y, z}} = 3 \implies R(x, y, z) \join R(y, z, x) \join R(z, x, y)$,
	\end{enumerate}
	which are in this context called \emph{antireflexivity}, \emph{symmetry}, \emph{transitivity}, and \emph{linearity}, respectively.

	Let $\tLO$ denote the category whose objects are all structures $(X, R)$ with a ternary relation satisfying the axioms above, and whose morphisms are all first-order embeddings.
	In a moment we shall make it clear that $\tLO$ can be identified with a full subcategory of $\aLO$ such that $\aLO = \LO \cup \tLO$ (see Observation~\ref{LO_union}).
	
	The construction $(X, \leq) \mapsto (X, R)$ induces a functor $F\maps \aLO \to \tLO$: it maps an $\aLO$-arrow $f\maps (X, \leq_X) \to (Y, \leq_Y)$ to the $\tLO$-arrow $f\maps (X, R_X) \to (Y, R_Y)$ represented by the same map between the underlying sets (i.e. it is a concrete functor).
	We need to show that a one-to-one homomorphism $f$ between almost linear orders is indeed an embedding of the corresponding $R$-structures.
	
	Clearly if $x$ is the minimum of a three-point set $\set{x, y, z}$, then $f(x)$ is the minimum of the three-point set $\set{f(x), f(y), f(z)}$.
	On the other hand, if $f(x)$ is the minimum of a three-point set $\set{f(x), f(y), f(z)}$, then the set $\set{x, y, z}$ is also three-point, and so has a minimum, which has to be $x$ by the first implication.
	
	Now let $(X, R)$ be an $\tLO$-object.
	We put
	\[
		x < y \letiff \exists w\, R(x, y, w) \quad \text{and} \quad x \leq y \letiff (x = y) \join \exists w\, R(x, y, w).
	\]
	The relation $<$ is antireflexive and transitive by the corresponding properties of $R$, and so it is a strict order.
	Hence, $\leq$ is an order and $<$ is its strict part.
	By the symmetry and linearity of $R$ we have that $\leq$ is an almost linear order.
	Moreover, if $f\maps (X, R_X) \to (Y, R_Y)$ is an embedding, then it is a one-to-one homomorphism of the induced almost linear orders.
	The one-to-one part is clear.
	Suppose $x \leq_X y$.
	We want to show that $f(x) \leq_Y f(y)$.
	This is clear if $x = y$, so we may suppose that $R_X(x, y, w)$ for some $w \in X$.
	But then $R_Y(f(x), f(y), f(w))$, and so $f(x) \leq_Y f(y)$.
	Together, the construction $(X, R) \mapsto (X, \leq)$ induces a concrete functor $G\maps \tLO \to \aLO$.
\end{con}

\begin{prop}
	We have $F \cmp G = \id{\tLO}$, so $G$ is a full embedding.
	Moreover, for every $\aLO$-object $X$ we have that $G(F(X))$ is $X$ with the modified order forgetting the order of the largest and the second largest element (if these exist).
	The corresponding arrows $\eps_X\maps G(F(X)) \to X$ are universal, so we have an adjunction counit $\eps\maps G \cmp F \to \id{\aLO}$.
	Together, $\tLO$ may be identified with a full coreflective subcategory of $\aLO$.
	
	\begin{proof}
		Let $(X, R)$ be an $\tLO$-object, $(X, \leq) := G(X, R)$, and $(X, R') := F(X, \leq)$.
		We want to show $R'(x, y, z) \iff R(x, y, z)$.
		By the definition, $R'(x, y, z)$ is equivalent to $(x < y) \meet (x < z) \meet (y \neq z)$, which is in turn equivalent to $\exists w, w'\, R(x, y, w) \meet R(x, z, w') \meet (y \neq z)$.
		This is clearly implied by $R(x, y, z)$ since we may simply put $w := z$ and $w' := y$.
		On the other hand, by the linearity of $R$ we have $R(x, y, z) \join R(y, z, x) \join R(z, x, y)$, so it is enough to show that $R(y, z, x) \join R(z, x, y)$ is not possible.
		Also $R(z, x, y)$ is not possible since by $R(x, z, w')$ and the transitivity we would obtain $R(z, z, w')$, which contradicts the antireflexivity.
		Similarly, $R(y, z, x)$, which is equivalent with $R(y, x, z)$ contradicts $R(x, y, w)$.
		
		For the second part, let $(X, \leq)$ be an $\aLO$-object and let $(X, \leq') := G(F(X, \leq))$.
		Clearly $x <' y$ if and only if $x < y$ and there is some $z \neq y$ such that $x < z$, so $\leq'$ only forgets the order of the two consecutive $\leq$-largest elements (if they exist).
		We want to show that $\id{X}\maps (X, \leq') \to (X, \leq)$ is the universal arrow, i.e. that every map $f\maps Y \to X$ for an almost linear order $(Y, \leq)$ of the form $G(Y, R)$ is an $\aLO$-arrow $(Y, \leq) \to (X, \leq)$ if and only if it is an $\aLO$-arrow $(Y, \leq) \to (X, \leq')$.
		The backward implication is trivial.
		For the forward implication, the only possible difference between $\leq$ and $\leq'$ is that $(X, \leq)$ is a linear order $X \setminus \set{a, b} < a < b$ for some $a, b \in X$, while $X \setminus \set{a, b} <' a, b$ with $\leq'$-incomparable $a, b$.
		So the only possible fail is when $a = f(a')$ and $b = f(b')$ for some comparable elements $a', b' \in Y$.
		Since we suppose that $f\maps (Y, \leq) \to (X, \leq)$ is a one-to-one homomorphism, necessarily $Y \setminus \set{a', b'} < a' < b'$.
		But that is a contradiction with the fact that $(Y, \leq) = G(Y, R)$.
	\end{proof}
\end{prop}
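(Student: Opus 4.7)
The plan is to unfold the definitions of $F$ and $G$ carefully and verify three separate claims: (1) $F \cmp G$ is the identity, (2) the description of the underlying set and order of $G \cmp F$, and (3) the universal property of $\eps_X$. Then the coreflection conclusion follows formally from standard adjoint functor theory.

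For (1), I would fix $(X, R) \in \tLO$, let $(X, \leq) := G(X, R)$ and $(X, R') := F(X, \leq)$, and check $R = R'$ as ternary relations. The forward direction $R(x, y, z) \Rightarrow R'(x, y, z)$ is straightforward: antireflexivity forces $x, y, z$ distinct, and by symmetry $R(x, y, z)$ gives both $R(x, y, z)$ and $R(x, z, y)$, so $x < y$ and $x < z$, whence $R'(x, y, z)$. For the converse $R'(x, y, z) \Rightarrow R(x, y, z)$, note $R'(x, y, z)$ says $y \neq z$ together with witnesses $R(x, y, w)$ and $R(x, z, w')$. By linearity of $R$, one of $R(x, y, z)$, $R(y, z, x)$, $R(z, x, y)$ holds for the (distinct) triple $x, y, z$. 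The latter two possibilities have to be eliminated using transitivity against the given $R(x, y, w)$ and $R(x, z, w')$ and antireflexivity.

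For (2), I unfold the definitions: $x \leq' y$ in $G(F(X))$ iff $x = y$ or there is some $w \neq y$ with $x < w$ and $x < y$. Thus $x <' y$ iff $x < y$ \emph{and} there is another element strictly above $x$ besides $y$. So $\leq$ and $\leq'$ agree except when $y$ is the unique element of $X$ strictly above $x$, i.e. when $x < y$ and $x, y$ are the second-largest and largest elements of $X$. In that case $x$ and $y$ become $\leq'$-incomparable maxima, exactly as claimed. In particular, $\eps_X := \id_X$ is a one-to-one homomorphism $G(F(X)) \to X$ since $\leq'$ is a subrelation of $\leq$.

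For (3), I claim $\eps_X$ is universal: given any $(Y, R) \in \tLO$ and any $\aLO$-arrow $f\maps G(Y, R) \to X$, since $\eps_X$ is the identity on underlying sets, the only candidate for the lift $g\maps (Y, R) \to F(X)$ is $g = f$ as a set-map; I must verify it is still a one-to-one homomorphism into $(X, \leq')$. The only possible obstruction, by (2), is if $X$ has top two linearly ordered elements $a < b$ and there exist $a', b' \in Y$ with $f(a') = a$, $f(b') = b$, and $a' <_Y b'$, since then $a <' b$ fails. The key obstacle I expect is ruling out this configuration using the fact that $(Y, \leq_Y) = G(Y, R)$: from $a' <_Y b'$ we get $R(a', b', w)$ for some $w$ distinct from $a', b'$, and by symmetry $R(a', w, b')$ gives $a' <_Y w$, forcing $f(a') < f(w)$ in $X$; but $a = f(a')$ has only $b$ strictly above it in $X$, so $f(w) = b = f(b')$, contradicting injectivity of $f$ since $w \neq b'$. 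Having $\eps$ as a counit together with $F \cmp G = \id_{\tLO}$ (which serves as the unit) then identifies $\tLO$ as a full coreflective subcategory of $\aLO$, with $F$ right adjoint to the inclusion $G$.
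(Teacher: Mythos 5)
Your proposal is correct and follows essentially the same three-part structure as the paper's proof: the same unfolding of $R'$ with linearity/transitivity/antireflexivity eliminating the other two cases, the same description of $\leq'$, and the same identification of the unique possible obstruction to the counit's universality. The only (immaterial) difference is at the very end: you rule out comparable preimages $a' <_Y b'$ directly by producing the witness $w$ with $a' <_Y w$ and contradicting injectivity of $f$, whereas the paper notes that $Y$ would then be a linear order whose top two elements are comparable, contradicting $(Y,\leq) = G(Y,R)$ — two phrasings of the same fact.
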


As a consequence of the previous proposition and Observation~\ref{almost_linear_objects} we obtain the following summary.
\begin{wn} \label{LO_union}
	When we view $\tLO \subs \aLO$, we have $\aLO = \LO \cup \tLO$.
	Moreover, $\LO \setminus \tLO$ consists exactly of linear orders of the form $L + 2$, while $\tLO \setminus \LO$ consists exactly of almost linear orders of the form $L + B$.
	For every linear order $L$ we have $F(L + 2) = L + B$.
\end{wn}

Let $\FinLO$, $\FinaLO$, and $\FintLO$ denote the full subcategories of $\LO$, $\aLO$, and $\tLO$, consisting of all finite structures.
We will show that $\FintLO$, which is a hereditary class of first-order structures, is a weak Ramsey category with almost no amalgamable objects (only the empty and singleton structures are amalgamable).
Recall the notion of amalgamation extension from Section~\ref{sec:extensions}.

\begin{prop}
	Both $\FinLO$ and $\FintLO$ are full cofinal subcategories of $\FinaLO = \FinLO \cup \FintLO$.
	A $\FinaLO$-object is amalgamable if and only if it is a linear order, i.e. a $\FinLO$-object.
	A $\FintLO$-arrow is amalgamable if and only if it factorizes through a linear order, which happens if and only if it is not of the form $f + B\maps L + B \to L' + B$ for an $\FinLO$-arrow $f\maps L \to L'$.
	Together, $\FinaLO$ is an amalgamation extension of $\FintLO$.
	
	\begin{proof}
		Both $\FinLO$ and $\FintLO$ are cofinal: a non-linear almost linear order $L + B$ admits a refinement $L + B \to L + 2$; a linear order $L$ admits an embedding $L \to L + B$.
		The rest of the first claim follows from Corollary~\ref{LO_union}.
		
		It is well-known that $\FinLO$ has the amalgamation property, and so every $\FinLO$-object is amalgamable in $\FinaLO$ since $\FinLO$ is a full cofinal subcategory.
		On the other hand, if a $\FinaLO$-arrow $\al\maps X \to Y$ does not factorize through a linear order, then $X$ has two maxima that are mapped to the two maxima of $Y$.
		The two linear refinements $f, g\maps Y \to Y'$ are not amalgamable over $\al$, and so $\al$ is not an amalgamable arrow.
	\end{proof}
\end{prop}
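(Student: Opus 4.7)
The plan is to verify the four assertions in turn, leaning on the structural classifications already established: that $\aLO = \LO \cup \tLO$, that every object of $\tLO \setminus \LO$ has the form $L + B$, and that every $\aLO$-arrow is either an embedding or a refinement followed by an embedding.
Fullness of both $\FinLO$ and $\FintLO$ in $\FinaLO$ is immediate from the definitions, while cofinality comes from the two canonical constructions $L \mapsto L + B$ (an embedding into $\FintLO$) and $L + B \mapsto L + 2$ (a refinement into $\FinLO$).
For the object characterization, amalgamability of any $\FinLO$-object in $\FinaLO$ follows from the classical amalgamation property of $\FinLO$ together with the fact that amalgamability transfers along full cofinal inclusions (already noted in the excerpt): one post-composes the given arrows with cofinal arrows back into $\FinLO$ to reduce to amalgamation there.
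Non-amalgamability of $L + B \in \FinaLO \setminus \FinLO$ is witnessed by its two distinct refinements $f_0, f_1\maps L + B \to L + 2$: any amalgamating pair $f', g'\maps L + 2 \to W$ satisfying $f' \cmp f_0 = g' \cmp f_1$ would, when evaluated at the two $B$-elements $a, b$, force $f'(f_0(a)) \leq f'(f_0(b))$ from $f_0(a) < f_0(b)$ and the equal quantity $g'(f_1(a)) \geq g'(f_1(b))$ from $f_1(a) > f_1(b)$, collapsing $f'(f_0(a)) = f'(f_0(b))$ and violating injectivity.

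For the arrow characterization I would run the cycle \emph{factors through a linear order} $\Rightarrow$ \emph{amalgamable} $\Rightarrow$ \emph{not of the form $f + B$} $\Rightarrow$ \emph{factors through a linear order}.
The first implication is the routine composition principle: write $\al = \beta \cmp \gamma$ with $\dom(\beta)$ linear, and given arrows $f, g$ starting at $\cod(\al)$, amalgamate $f \cmp \beta, g \cmp \beta$ using amalgamability of $\dom(\beta)$, then precompose the solution with $\gamma$.
The second implication is proved contrapositively: if $\al = f + B$, the two refinements $g_0, g_1\maps L' + B \to L' + 2$ of the codomain give a span over $\al$ to which exactly the same collapsing argument as in the object case applies.
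For the third, if $\dom(\al)$ is linear then $\al$ factors through $\dom(\al)$ trivially; otherwise $\dom(\al) = L + B$, and the $\aLO$-arrow classification splits $\al$ into either an embedding or a refinement-followed-by-embedding.
An embedding with non-linear domain must preserve the incomparability of the two $B$-elements, and the only incomparable pair in any $\aLO$-object is the unique top pair (when it exists); hence such an embedding is forced to be of the form $f + B$.
Thus, if $\al$ is not of that form, it must begin with a refinement $L + B \to L + 2$, exhibiting a factorization through a linear order.

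The final claim that $\FinaLO$ is an amalgamation extension of $\FintLO$ assembles from the pieces: $\FintLO$ is full cofinal in $\FinaLO$, the objects of $\FinaLO \setminus \FintLO$ are exactly the finite linear orders of the form $L + 2$ (hence amalgamable as linear orders), and every amalgamable $\FintLO$-arrow factors through a linear order — and so through an amalgamable object of $\FinaLO$ — by the arrow characterization.
The most delicate step I anticipate is the arrow characterization, specifically keeping the three conditions distinct and using the $\aLO$-arrow dichotomy together with the uniqueness of the incomparable pair in any $\aLO$-object to pin down exactly the claimed trichotomy.
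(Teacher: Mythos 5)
Your proposal is correct and follows essentially the same route as the paper's proof: cofinality via the refinement $L+B \to L+2$ and the embedding $L \to L+B$, amalgamability of linear orders via the classical amalgamation property of $\FinLO$ and transfer along full cofinal inclusions, and non-amalgamability of arrows of the form $f+B$ witnessed by the two incompatible refinements of the codomain. You merely spell out in more detail the steps the paper leaves implicit (the injectivity-collapsing argument and the use of the embedding/refinement dichotomy to show that an arrow not of the form $f+B$ factors through a linear order).
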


It follows from Proposition~\ref{thm:amalgamation_extension} that $\FintLO$ is a weak Fraïssé category with no amalgamable objects but the degenerate ones (the empty and singleton orders).
Moreover, it is well-known that $\FinLO$ has the Ramsey property – this corresponds to the classical finite Ramsey theorem, and so $\FintLO$ has the weak Ramsey property.

Let us also describe the situation with the (weak) Fraïssé limit.
Let $\sigma\FinLO$, $\sigma\FinaLO$, and $\sigma\FintLO$ be the corresponding $\sigma$-closures (Construction~\ref{sigma_closure}), which in this case are the full subcategories of $\LO$, $\aLO$, and $\tLO$ of all countable structures.
By the general theory (Remark~\ref{rm:Flim}), each pair $(\FinLO, \sigma\FinLO)$, $(\FinaLO, \sigma\FinaLO)$, and $(\FintLO, \sigma\FintLO)$ has a weak Fraïssé limit.
By Proposition~\ref{thm:amalgamation_extension}, the corresponding weak Fraïssé sequences are isomorphic, and so are their colimits, so all three pairs have a common weak Fraïssé limit, necessarily being an object of $\sigma\FinLO \cap \sigma\FintLO$.
Of course, the limit is the order of rationals $(\Qyu, \leq)$ as the well-known Fraïssé limit of finite linear orders.
The classical KPT correspondence translates the Ramsey property of $\FinLO$ to the extreme amenability of $\aut(\Qyu, \leq)$, which is then translated by the weak KPT correspondence (Theorem~\ref{thm:KPT}) to the weak Ramsey property of $\FintLO$.
This is an alternative proof of the weak Ramsey property of $\FintLO$.

\newcommand{\Tree}{\mathsf{Tree}} 
\newcommand{\below}{{\leftarrow}} 
\renewcommand{\above}{{\rightarrow}} 

\newcommand{\SplTree}{\mathsf{SplTree}} 
\newcommand{\FSplTree}{\mathsf{FSplTree}} 
\newcommand{\spl}{\operatorname{spl}} 
\newcommand{\Spl}{\operatorname{Spl}} 
\newcommand{\Lab}[1]{\overline{#1}} 
\newcommand{\Dec}[1]{\widehat{#1}} 
\newcommand{\DSplTree}{\mathsf{DSplTree}} 
\newcommand{\dspl}{\operatorname{\overline{spl}}} 

\newcommand{\LevTree}{\mathsf{LevTree}} 
\newcommand{\Lev}{\operatorname{Lev}} 
\newcommand{\lev}{\operatorname{lev}}
\newcommand{\lv}{^\text{lev}} 
\newcommand{\eqlv}{\strictiff} 

\newcommand{\LexTree}{\mathsf{LexTree}} 
\newcommand{\lex}{^\text{lex}} 

\newcommand{\StrTree}{\mathsf{StrTree}} 
\newcommand{\LexStrTree}{\mathsf{LexStrTree}} 
\newcommand{\LexLevTree}{\mathsf{LexLevTree}} 
\newcommand{\LexSplTree}{\mathsf{LexSplTree}} 

\newcommand{\T}{\mathfrak{T}} 
\newcommand{\Tw}{\T_M} 
\newcommand{\Tc}{\Lab\T\vphantom{\T}_M} 
\newcommand{\Ta}{\mathfrak{D}_M} 
\newcommand{\U}{U_M} 
\newcommand{\LT}{\mathfrak{L}\T_M}
\newcommand{\LTw}{\Tw'} 
\newcommand{\LTc}{\Tc'}
\newcommand{\LTa}{\Ta'}
\newcommand{\LU}{U'_M}
\newcommand{\WD}[1]{W_{#1}} 

\newcommand{\TermExt}{\vartriangleleft} 
\newcommand{\NTermExt}{\vartriangleright} 

\newcommand{\ord}{\operatorname{ord}} 
\newcommand{\Br}{\operatorname{Br}} 

\subsection{Trees}

It turns out there are many notions of trees and embeddings of trees.
In this section we consider certain categories $\Tw$ of structured finite trees and \emph{strong embeddings} related to the classical Milliken's theorem~\cite{MilTrees}.
We show that the categories $\Tw$ are weak Fraïssé and describe the countable trees $\U$ that are their weak Fraïssé limits.
After we forget part of the structure, namely the levels, the corresponding categories $\LTw$ are still weak Fraïssé, and moreover they have the weak Ramsey property.
By the KPT correspondence (Theorem~\ref{thm:KPT}), the corresponding limit trees $\LU$ have extremely amenable automorphism groups.

Since we shall consider both finite and infinite trees with various extra structure, we carefully define our categories in several steps.
By a \emph{tree} we mean a triple $(T, \leq, \meet)$ where $(T, \leq)$ is a partially ordered set such that every set $(\below, x]_T := \set{y \in T: y \leq x}$ is linearly ordered and such that every pair $x, y \in T$ has the $\leq$-meet $x \meet y$.
By an \emph{embedding of trees} we mean a one-to-one map $f\maps S \to T$ between trees such that $f(x \meet y) = f(x) \meet f(y)$ for every $x, y \in S$ (and so $x \leq y \iff f(x) \leq f(y)$ for every $x, y \in S$), i.e. a first-order embedding in the language $\set{\leq, \meet}$.
The category of all trees and all embeddings is denoted by $\Tree$.

An element of a tree is often called a \emph{node}, the minimum element (if it exists) is called the \emph{root}, and a maximal element is called a \emph{terminal node}.
A node $s$ is an \emph{immediate successor} of a node $t$ if $s > t$ and there is no $x$ such that $s > x > t$.
We use interval notation with respect to the tree order, i.e. $[t, \above)_T := \set{x \in T: x \geq t}$, $(s, t)_T := \set{x \in T: s < x < t}$, etc.
We omit the $T$ subscript when the tree is clear from the context.
A tree $T$ is \emph{well-founded} if every set $(\below, t]$ for $t \in T$ is well-ordered.
Trees considered in set-theory are often well-founded, but here we consider also trees that are not well-founded, e.g. the set $(\below, t)$ may be isomorphic to $(\Qyu, \leq)$.

\begin{df}[Splitting degree and splitting preserving embeddings]
	For every tree $T$ and $t \in T$ we may consider the \emph{splitting equivalence} defined on $(t, \above)$ by $x \sim y \iff x \meet y > t$.
	(The transitivity follows since $x \meet z \in \set{x \meet y, y \meet z}$.)
	Let $\Spl(t)$ denote the set of the equivalence classes $(t, \above)/{\sim}$, which correspond to the connected parts strictly above $t$, and let $\spl(t)$ denote the \emph{splitting degree} at a non-terminal node $t$ defined as the cardinality of $\Spl(t)$.
	Of course, if the tree $T$ is well-ordered, $\Spl(t)$ corresponds to immediate successors of $t$, but in general it is not the case.
	Also, maximal antichains in $(t, \above)$ are exactly transversals of $\Spl(t)$.
	A tree is \emph{finitely splitting} if every set $\Spl(t)$, $t \in T$, is finite.
	
	For every embedding $f\maps T \to S$ we have that a class $[x] \in \Spl(t)$ gets mapped into the class $[f(x)] \in \Spl(f(t))$ and that different classes get mapped into different classes, so $f$ induces an injective map $\Spl_T(t) \to \Spl_S(f(t))$.
	We say that $f$ \emph{preserves splitting} if for every non-terminal node $t \in T$ the induced map $\Spl_T(t) \to \Spl_S(f(t))$ is a bijection.
	For finitely splitting trees, this is equivalent to preserving the splitting degree of all non-terminal nodes of $T$.
	Let $\SplTree \subs \Tree$ be the wide subcategory of all trees and all splitting preserving embeddings, and let $\FSplTree \subs \SplTree$ be the full subcategory of finitely splitting trees.
	
	It will be useful to consider also trees whose splitting degree has been decided on terminal-nodes.
	Let $\Lab\FSplTree$ denote the category of all trees $T$ endowed with unary relations $\set{R_n}_{n \in \Nat^+}$ such that for every $t \in T$ at most one of $R_n(t)$ holds, and if $t$ is non-terminal, then $R_{\spl(t)}(t)$ holds.
	The morphisms are first-order one-to-one homomorphisms in the language $\set{\leq, \meet} \cup \set{R_n}_{n \in \Nat^+}$.
	Nodes $t \in T$ for which some $R_n(t)$ holds are called \emph{decided}.
	Clearly, $\FSplTree$ can be identified with the full subcategory of $\Lab\FSplTree$ of trees with undecided terminal nodes.
	On the other hand, by $\DSplTree$ we denote the full subcategory of $\Lab\FSplTree$ of trees whose all terminal nodes are decided.
\end{df}

\begin{df}[Leveled trees]
	By a \emph{leveled tree} we mean a tree $T$ equipped with an equivalence $\eqlv$ encoding the information of which nodes are on the same level, i.e. the level set $\Lev(T) := T/{\eqlv}$ with the induced order is linearly ordered, and for every $t \in T$ the \emph{level function} $\lev\maps T \to \Lev(T)$ induces an order isomorphism $(\below, t]_T \to (\below, \lev(t)]_{\Lev(T)}$.
	By $\LevTree$ we denote the category of all leveled trees and first-order embeddings (or equivalently one-to-one homomorphisms) in the language $\set{\leq, \meet, \eqlv}$.
	Note that a $\LevTree$-embedding $f\maps S \to T$ induces an order embedding $\Lev(S) \to \Lev(T)$.
	
	The preorder on $T$ induced by $\Lev(T)$ is denoted by $\leq\lv$, i.e. $x \leq\lv y \iff \lev(x) \leq \lev(y)$.
	The set $\set{t \in T: \lev(t) = \al}$ is denoted by $T(\al)$.
	For every $\al \leq \lev(t)$ the unique node $s \leq t$ such that $\lev(s) = \al$ is denoted by $t\restr{\al}$.
	Sometimes we implicitly identify the level set $\Lev(T) = T/{\eqlv}$ with another linearly ordered set, e.g. an ordinal or the rational numbers.
	
	Clearly, every well-ordered tree $T$ admits a unique level structure with $\Lev(T)$ being an ordinal, yet a $\Tree$-map between well-ordered trees is not necessarily level preserving.
	On the other hand, there are trees admitting several non-isomorphic level structures, e.g. two copies of $(\Qyu, \leq)$ above a common root with one level structure gluing the copies of $\Qyu$ and another level structure making one copy a strict initial segment of the other.
	There are also trees admitting no level structure, e.g. $(\Qyu, \leq)$ and a terminal node above a common root.
	
	A leveled tree is called \emph{balanced} if for every pair of nodes $t <\lv s$, there is a node $t'$ such that $t < t' \eqlv s$.
	A finite tree is balanced if and only if all its terminal nodes are at the same level.
\end{df}

\begin{df}[Lexicographic trees]
	By a \emph{lexicographic tree} we mean a tree $T$ endowed with a linear order $\leq\lex$ that is coherent with the splitting structure of $T$, i.e. for every $t \in T$ there is a linear order $\leq^t$ on $\Spl(t)$ such that $x \leq\lex y$ if and only if $x \leq y$ or $x, y > x \meet y$ and $[x] <^{x \meet y} [y]$.
	Informally, the tree is linearly ordered from bottom to top and from left to right.
	By $\LexTree$ we denote the category of all lexicographic trees and all first-order embeddings (or equivalently one-to-one homomorphisms) in the language $\set{\leq, \meet, \leq\lex}$.
\end{df}
 
\begin{df}[Strong embeddings]
	By a \emph{strong embedding} we mean and embedding of leveled trees preserving both splitting and levels.
	A \emph{strong subtree} of a leveled tree $T$ is a subset $S$ such that the inclusion $S \subs T$ is a strong embedding.
	By $\StrTree$ we denote the category of leveled trees (called \emph{strong trees} in this context) and strong embeddings, 
	and by $\LexStrTree$ we denote the category of lexicographic leveled trees and strong embeddings also preserving the lexicographic order. 
	We add the lexicographic order in order to ensure the rigidity needed for the weak Ramsey property.
	By $\LexLevTree$ and $\LexSplTree$ we denote the expansions of $\LevTree$ and $\SplTree$ by the lexicographic order.
	We summarize the categories in Figure~\ref{fig:tree_cats}.
	
	\begin{figure}[ht]
	\centering
	
	\begin{tikzpicture}[
			text diagram,
			x = {(14ex, 0)},
			y = {(0, -7ex)},
		]
		\node (t) at (0, 0) {$\Tree$};
		\node (spt) at (-1, 1) {$\SplTree$};
		\node (lvt) at (0, 1) {$\LevTree$};
		\node (lxt) at (1, 1) {$\LexTree$};
		\node (str) at (-1, 2) {$\StrTree$};
		\node (lxspt) at (0, 2) {$\LexSplTree$};
		\node (lxlvt) at (1, 2) {$\LexLevTree$};
		\node (lxstr) at (0, 3) {$\LexStrTree$};
		
		\graph{
			{(spt), (lvt), (lxt)} -> (t),
			(str) -> {(spt), (lvt)},
			(lxspt) -> {(spt), (lxt)},
			(lxlvt) -> {(lvt), (lxt)},
			(lxstr) -> {(str), (lxspt), (lxlvt)},
		};
	\end{tikzpicture}
	
	\caption{Categories of trees and forgetful functors between them.}
	\label{fig:tree_cats}
\end{figure}
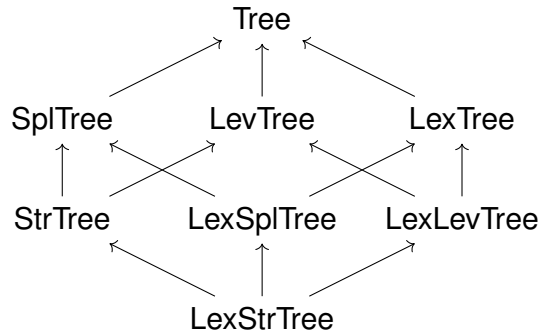
\end{df}

\begin{remark}
	Observe that an inclusion of a subset $S \subs T$ of a finite tree with the induced order preserves meets (and splitting) if and only if for every non-terminal node $s \in S$ and every its immediate successor $t \in T$ we have that $[t, \above)_T$ contains at most one (exactly one) immediate successor of $s$ in $S$.
	Therefore, $\Tree$-embeddings, $\SplTree$-embeddings, and $\StrTree$-embeddings correspond to weakly embedded, embedded, and strongly embedded subtrees as defined in the original Milliken's paper \cite{MilTrees}.
	The lexicographic order is also considered in \cite{MilTrees} by the name extended order.
\end{remark}

\begin{df}[Categories of finite trees]
	We finally define the categories of finite strong trees we are interested in.
	Let $M \subs \Nat^+$ be a nonempty set.
	$\Tw$ denotes the full subcategory of $\LexStrTree$ consisting of finite trees such that the splitting degree takes values only in $M$.
	Note that if $M = \set{1}$, then $\Tw$ consists of finite linearly ordered sets, and strong embeddings are just increasing embeddings.

	Moreover, $\Tc$ denotes the variant of $\Tw$ where terminal nodes may have decided splitting without actually being split at, i.e. the $\FSplTree$-component of the category is replaced by $\Lab\FSplTree$.
	Of course, $\Tw$ is identified with full a subcategory of $\Tc$.
	By $\Ta$ we denote the full subcategory of $\Tc$ of trees where all terminal nodes are decided.
\end{df}

\subsubsection{Extensions and amalgamations}

In this section we describe and classify arrows in $\Tw$, $\Tc$, and $\Ta$ for a fixed nonempty set $M \subs \Nat^+$, and characterize amalgamable objects and arrows.

First, observe that without loss of generality we may view a $\Tc$-arrow $f\maps S \to T$ as an inclusion $S \subs T$ (formally we compose $f$ with an isomorphism $g\maps T \to T'$ such that $g \cmp f\maps S \subs T'$).
So instead of embeddings we may work with \emph{extensions}, which simplifies the proofs.
Second, given an extension $S \subs T$ there are two aspects:
one, newly added nodes, and two, \emph{deciding} the splitting degree of existing terminal nodes without adding new nodes above them, which is possible in $\Tc$ and required in $\Ta$.
We shall focus on the first aspect and mention the second one only when important.
So from now on we keep in mind that we are working in any of $\Tw$, $\Tc$, $\Ta$, without necessarily specifying which one.
By a \emph{tree} we mean an object of our category (in particular it is always a finite tree), by an \emph{embedding} we mean a morphism, and by an \emph{extension} we mean an embedding that is also an inclusion of sets.

\begin{df}
	We say that an extension $S \subs T$ is \emph{terminal} if $S$ is a lower subset of $T$, i.e. there is no newly added node $t \in T \setminus S$ below any existing node $s \in S$.
\end{df}

\begin{con}[Terminal planting]
	Let $S, T$ be trees, let $s \in S$ be a terminal node, and let $r$ denote the root of $T$.
	We consider the tree $S \TermExt_s T$ resulting from planting the tree $T$ at the terminal node $s \in S$, like on Figure~\ref{fig:planting}.
	
	\begin{figure}[ht]
	\centering
	
	\begin{tikzpicture}[trees]
		\path[dot nodes] node (root) {}
			child[shift children=-2\ru] {node {}
				child {node {}}
				child {node {}}
			}
			child[subtree={blue}{up triangle node}] {node[red, diamond node] (s) {}
				child {node {}
					child {node {}}
					child {node {}}
				}
				child {node {}}
			}
			child {node {}}
		;
		
		\node[below left=1\ru] at (root-1) {$S$};
		\node[red, right=0.8\ru] at (s) {$s$};
		\node[blue, above right=0.8\ru] at (s-2) {$T$};
	\end{tikzpicture}
	
	\caption{A terminal planting $S \TermExt_s T$.}
	\label{fig:planting}
\end{figure}
	
	The trees $S$ and $T$ can be replaced by isomorphic copies, so we may suppose that $s = r$ and $S \cap T = \set{s}$.
	Then there is a unique lexicographic strong tree structure on $S \cup T$ that extends both $S$ and $T$: for every $u \in S$ and $t \in T$ we have $u \meet t = u \meet_S s$; the splitting at every node is realized either in $S$ or $T$, and so the lexicographic order is uniquely determined, and since our trees are finite, there is a unique level structure.
	For convenience, given the original trees with the sets $S, T$ we can take $S \TermExt_s T = S \cup \set{(s, t): r \neq t \in T}$ as the representing set so that we have an extension $S \subs (S \TermExt_s T)$.
	
	Note that up to a canonical isomorphism we have $(S \TermExt_s T) \TermExt_t R = S \TermExt_s (T \TermExt_t R)$ for $s$ a terminal node of $S$ and $t$ a terminal node of $T$.
	Similarly, $(S \TermExt_{s_1} T_1) \TermExt_{s_2} T_2 = (S \TermExt_{s_2} T_2) \TermExt_{s_1} T_1$ for $s_1, s_2$ distinct terminal nodes of $S$.
	Finally, more trees can be planted at distinct terminal nodes at once: for a set of terminal nodes $A \subs S$ and trees $\set{T_a: a \in A}$ we consider the extension $S \TermExt_{a \in A} T_a$.
\end{con}

\begin{prop} \label{thm:terminal_extension}
	For every set $A \subs S$ of terminal nodes and for nonempty trees $T_a$ we have that $S \TermExt_{a \in A} T_a$ is a terminal extension of $S$.
	On the other hand, every terminal extension $S \subs T$ is canonically of the form $S \TermExt_{a \in A} T_a$ with nondegenerate trees $T_a$ (unless $S = \emptyset$).
	
	\begin{proof}
		The first claim is clear from the definition.
		For the second claim let $\emptyset \neq S \subs T$ be a terminal extension.
		For a node $t \in T \setminus S$ let $a = s(t)$ be the maximum of $(\below, t] \cap S$.
		Note that $T_a := [a, \above)$ is a strong subtree of $T$.
		Moreover, we have that $a$ is a terminal node of $S$ and so $T_a \cap S = \set{a}$.
		Otherwise, $a$ would have an immediate successor from $S$ by the terminality of $S \subs T$ and also an immediate successor from $T \setminus S$ by the definition of $a$.
		That would contradict $\spl_S(a) = \spl_T(a)$.
		Together, we have $T = (S \TermExt_{a \in A} T_a)$ for $A := \set{s(t): t \in T \setminus S}$.
	\end{proof}
\end{prop}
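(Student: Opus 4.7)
The first direction is immediate from the construction of $\TermExt$: every node lying in $S \TermExt_{a \in A} T_a$ but outside $S$ is strictly above some $a \in A \subseteq S$, and since $a$ is terminal in $S$, nothing in $S$ is strictly above $a$; hence $S$ remains a lower subset of the extension.

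For the converse, assume $S \neq \emptyset$ and fix a terminal extension $S \subseteq T$. Since $S$ is a nonempty finite tree with meets, it has a root $r_S$, and downward closedness of $S$ in $T$ forces $r_S$ to coincide with the root of $T$ (any root $r_T$ of $T$ satisfies $r_T \leq r_S \in S$, so $r_T \in S$ by downward closure, hence $r_T = r_S$). For each $t \in T \setminus S$, the set $(\below, t]_T \cap S$ therefore contains $r_S$ and, being a subset of the chain $(\below, t]_T$, is finite and linearly ordered, so admits a maximum $s(t)$. I set $A := \setof{s(t)}{t \in T \setminus S}$ and $T_a := [a, \above)_T$ for $a \in A$; each $T_a$ inherits a lexicographic strong leveled tree structure from $T$ and is nondegenerate because $a = s(t)$ witnesses some $t > a$ lying in $T_a$.

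The heart of the proof is to show that every $a \in A$ is terminal in $S$; from this, $T_a \cap S = \set{a}$ and the set-level decomposition $T = S \cup \bigcup_{a \in A} T_a$ follow at once. Suppose instead that $a$ has an immediate successor $a' \in S$. Downward closure of $S$ precludes any $T$-node strictly between $a$ and $a' \in S$, so $a'$ remains an immediate successor of $a$ in $T$. On the other hand, taking $t \in T \setminus S$ with $s(t) = a$, finiteness of the tree gives the splitting class $[t]_{\Spl_T(a)}$ a minimum $t_0$, which is an immediate successor of $a$ in $T$; and $t_0 \notin S$, because $t_0 \in S$ would place $t_0$ into $(\below, t]_T \cap S$ strictly above $a$, contradicting maximality of $a = s(t)$. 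Since every immediate successor of $a$ in $S$ remains one in $T$ by the same downward-closure argument, the immediate successors of $a$ in $T$ strictly contain those of $S$ together with the new $t_0$, giving $\spl_T(a) > \spl_S(a)$ and contradicting splitting preservation of the strong embedding $S \subseteq T$.

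Finally, to identify $T$ with $S \TermExt_{a \in A} T_a$ canonically, I would check that all the structural data of $T$ restrict and recombine correctly: the meet of $u \in S$ with $t \in T_a \setminus \set{a}$ factors as $u \meet a$, since any common lower bound lies in $(\below, u]_T \cap (\below, t]_T \subseteq S \cap (\below, t]_T \subseteq (\below, a]_T$, which is exactly the rule $u \meet t = u \meet_S a$ used in the construction; splitting, levels, and the lexicographic order then agree piecewise on $S$ and on each $T_a$, and the uniqueness clause for the canonical level and lexicographic structures on finite lexicographic strong trees forces the two tree structures to coincide. I expect the main routine obstacle to be this final bookkeeping around the level and lexicographic data, whereas the substantive step is the splitting contradiction above that pins down the terminality of each $a \in A$.
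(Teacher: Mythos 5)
Your proof is correct and follows essentially the same route as the paper's: define $s(t)$ as the maximum of $(\below,t]\cap S$, show each such $a$ is terminal in $S$ via a splitting-degree contradiction (an extra immediate successor of $a$ coming from $T\setminus S$ would break $\spl_S(a)=\spl_T(a)$), and conclude the decomposition. You merely fill in details the paper leaves implicit — the existence and minimality of $t_0$, why $S$-immediate successors persist in $T$, and the final check that meets, levels, and the lexicographic order recombine as in the planting construction.
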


\begin{df}
	By a \emph{bush} we mean a tree with exactly two levels: the root and the nonempty set of its immediate successors.
	
	By a \emph{one-step terminal extension} of a tree $S$ we mean an extension of the form $S \TermExt_s B$ where $B$ is a bush.
	Note that every terminal extension $S \subs T$ can be realized as a finite sequence/composition of one-step terminal extensions $S \TermExt_s B_1 \TermExt_{b_1} \cdots \TermExt_{b_{k - 1}} B_k$.
	In particular, every nonempty tree $T$ can be build by planting bushes and starting with a singleton tree $S = \set{*}$.
\end{df}

\begin{df}
	By a \emph{non-terminal extension} we mean the complete opposite of the terminal extension: it is an extension $S \subs T$ such that for every decomposition into two consecutive extensions $S \subs T' \subs T$ such that $T' \subs T$ is a terminal extension, we have $T' = T$.
\end{df}

\begin{prop} \label{thm:extension_decomposition}
	Every extension $S \subs T$ can be canonically decomposed as $S \subs T' \subs T$ in a way that $S \subs T'$ is a non-terminal extension and $T' \subs T$ is a terminal extension.
	
	\begin{proof}
		Let $S' := \bigcup_{s \in S} (\below, s]_T$ be the lower subset of $T$ generated by $S$.
		Clearly, every suitable tree $T'$ has to contain $S'$.
		But $S'$ may not be a strong subtree.
		It is closed under meets and levels, but typically not under splitting: we may have $\spl_{S'}(s) < \spl_T(s)$ for some $s \in S' \setminus S$, so we need to add more nodes witnessing the splitting.
		Fortunately, since our tree is finite and so every level but the top one has a successor level, we have canonical witnesses.
		It is enough to add all $T$-immediate successors of all nodes $s \in S' \setminus S$.
		Note that all $T$-immediate successors of non-terminal nodes of $S$ are already in $S'$.
		So let $S'' := S' \cup \set{t \in T: t$ an immediate successor of a node form $S' \setminus S}$.
		
		We have that $S''$ is the smallest strong subtree of $T$ containing $S'$.
		It follows that every suitable $T'$ has to contain $S''$.
		Also, $S'' \subs T$ is a terminal extension since $S''$ is a lower subset.
		Hence, $S'' \subs T'$ is a terminal extension, and necessarily $T' = S''$.
		It remains to show that $S \subs S''$ is a non-terminal extension.
		Let $N := S'' \setminus T''$ be the set of newly added nodes for some terminal extension $T'' \subs S''$ such that $S \subs T''$.
		Clearly, no node from $S'$ is in $N$.
		Similarly, no node from $S'' \setminus S'$ is in $N$ since its predecessor is in $S' \setminus S \subs T''$ and so would have to be a terminal node of $T''$, which it is not.
		So $N = \emptyset$ and $T'' = S''$.
	\end{proof}
\end{prop}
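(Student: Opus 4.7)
The plan is to construct $T'$ as the smallest strong subtree of $T$ that contains $S$ as a subtree and that is a lower subset of $T$, and then verify both the terminal and non-terminal conditions, plus canonicity.

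First I would define $S^\downarrow := \bigcup_{s \in S} (\below, s]_T$, the downward $T$-closure of $S$. This set is automatically closed under the relations $\leq$, $\meet$, and $\eqlv$ inherited from $T$, since ancestors of nodes in $S$ together with their meets and their level-mates in $S^\downarrow$ are again ancestors. However, $S^\downarrow$ need not be a strong subtree: at a node $s \in S^\downarrow \setminus S$ we may have $\spl_{S^\downarrow}(s) < \spl_T(s)$, because some splitting classes above $s$ in $T$ may contain no node of $S$. To repair this, I would set $T'$ to be $S^\downarrow$ augmented by all $T$-immediate successors of nodes in $S^\downarrow \setminus S$. Finiteness is used crucially here: each such node lies strictly below the top level, so it has genuine immediate successors in $T$ that can serve as splitting witnesses.

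Next I would show that $T'$ is indeed a strong subtree of $T$. At every node $s \in S$, the splitting degree is already preserved by the embedding $S \hookrightarrow T$. At every node $s \in S^\downarrow \setminus S$ (interior ancestor), the newly added immediate successors, together with those representatives coming from $S^\downarrow$ itself, realize every splitting class of $s$ in $T$, giving $\spl_{T'}(s) = \spl_T(s)$. The added successors are themselves terminal in $T'$. Since $T'$ is a union of $S^\downarrow$ and nodes at successor levels, it inherits the lexicographic order and a coherent level structure, hence is an object of our category, and $T' \subs T$ is an extension. Moreover $T'$ is a lower subset of $T$: no newly added node sits below any existing node of $T'$, because the added nodes are at the successor level of a terminal-in-$S$ node, so $T' \subs T$ is terminal.

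Then I would verify that $S \subs T'$ is non-terminal: if $S \subs T'' \subs T'$ with $T'' \subs T'$ terminal, then $T''$ must contain $S^\downarrow$ (since $T''$ is a strong subtree containing $S$, hence contains all ancestors needed to realize meets, levels, and the correct lex order), and it must contain every added immediate successor of nodes in $S^\downarrow \setminus S$ (otherwise a node of $S^\downarrow \setminus S \subs T''$ would have strictly smaller splitting in $T''$ than in $T'$, contradicting $T''$ being a strong subtree), so $T'' = T'$. Finally, canonicity follows because any decomposition $S \subs T^* \subs T$ with the required properties forces $T^* \supseteq S^\downarrow$ (from being a strong subtree, since meets with elements of $S$ live in $S^\downarrow$) and $T^* \supseteq $ the added immediate successors (to achieve correct splitting at ancestors), giving $T^* \supseteq T'$; the non-terminality of $S \subs T^*$ combined with terminality of $T' \subs T^*$ then forces $T^* = T'$.

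The main obstacle I anticipate is the non-terminality argument for $S \subs T'$: one must be careful that every added node is genuinely forced, meaning one cannot strip off a terminal planting and still have a strong subtree containing $S$. The argument hinges on the splitting-preservation requirement at the intermediate ancestors in $S^\downarrow \setminus S$, which is exactly where finiteness of the trees and the availability of successor levels are used.
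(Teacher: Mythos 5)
Your construction is the paper's construction, step for step: you form the downward closure $S^\downarrow = \bigcup_{s \in S}(\below, s]_T$, add the $T$-immediate successors of the nodes of $S^\downarrow \setminus S$ to restore the splitting degrees, and then verify terminality of $T' \subs T$, non-terminality of $S \subs T'$, and minimality/canonicity in the same way the paper does. The one point to repair is a misattributed justification that appears twice: you assert that $T''$ (in the non-terminality check) and $T^*$ (in the canonicity check) must contain $S^\downarrow$ ``since a strong subtree containing $S$ contains all ancestors needed to realize meets, levels, and the lex order.'' That claim is false as stated --- $S$ itself is a strong subtree of $T$ containing $S$, and it generally omits most of $S^\downarrow$. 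The correct reason is already among your hypotheses: a terminal extension $T'' \subs T'$ (resp.\ $T^* \subs T$) means by definition that $T''$ is a lower subset of $T'$, so $S \subs T''$ forces $(\below, s]_{T'} \subs T''$ for every $s \in S$, i.e.\ $S^\downarrow \subs T''$. (The same lower-subset property, not the position of the added successors, is also what makes $T' \subs T$ terminal.) With those substitutions your argument coincides with the paper's.
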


From the previous proof we obtain the following.

\begin{wn} \label{nonterminal}
	An extension $S \subs T$ is non-terminal if and only if every node $t \in T \setminus S$ is below an $S$-node or its immediate predecessor is a node from $T \setminus S$ below an $S$-node.
\end{wn}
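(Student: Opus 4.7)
\emph{Proof plan.} My plan is to read the corollary directly off the canonical decomposition constructed in the proof of Proposition~\ref{thm:extension_decomposition}. Recall that for any extension $S \subs T$ that proof produces the lower subset $S' := \bigcup_{s \in S}(\below,s]_T$ of $T$ generated by $S$, together with
\[
    S'' := S' \cup \{t \in T : \pi(t) \in S' \setminus S\},
\]
where $\pi(t)$ denotes the immediate $T$-predecessor of a non-root node $t$; it is shown there that $T' = S''$ is the \emph{unique minimal} intermediate strong subtree with $S \subs T' \subs T$ such that $T' \subs T$ is terminal.

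First I would reformulate the hypothesis. By definition, $S \subs T$ is non-terminal iff the only $T'$ with $S \subs T' \subs T$ and $T' \subs T$ terminal is $T' = T$. Combined with the minimality established in Proposition~\ref{thm:extension_decomposition}, this is equivalent to the canonical minimal choice $S''$ already equalling $T$. Hence the task reduces to translating the pointwise identity $T = S''$ into the criterion in the statement.

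Next I would perform a direct pointwise unwinding. Since $S \subs S' \subs S''$, the identity $T = S''$ is equivalent to the inclusion $T \setminus S \subs (S' \setminus S) \cup (S'' \setminus S')$. For $t \in T \setminus S$, membership in $S' \setminus S$ amounts by definition to ``$t < s$ for some $s \in S$'', i.e., \emph{$t$ is below an $S$-node}; membership in $S'' \setminus S'$ amounts by construction to ``$\pi(t) \in S' \setminus S$'', i.e., \emph{$\pi(t)$ is a node of $T \setminus S$ that is below an $S$-node}. The disjunction of these two conditions is exactly the criterion stated.

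I do not expect any real obstacle beyond this bookkeeping. The only minor subtlety is the edge case in which $t \in T \setminus S$ is the root of $T$ and thus has no immediate predecessor: then $t$ lies strictly below every element of $S$ (assuming $S \ne \emptyset$), so the first clause of the criterion already covers it, and if $S = \emptyset$ both sides collapse to the condition $T = \emptyset$.
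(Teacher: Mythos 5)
Your proposal is correct and follows exactly the route the paper intends: the corollary is stated as an immediate consequence of the proof of Proposition~\ref{thm:extension_decomposition}, namely that non-terminality is equivalent to $T$ coinciding with the canonical minimal strong lower subtree $S''$, whose pointwise description is precisely the stated criterion. Your handling of the root/empty-$S$ edge cases is a harmless extra check.
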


\begin{con}[Tree surgery]
	By a \emph{pointed bush} $B^b$ we mean a pair $(B, b)$ where $B$ is a bush and $b$ is one of its top-level nodes.
	
	For a tree $S$, its level $\al$, and a family of pointed bushes $(B_s^{b_s})_{s \in S(\al)}$, we consider the extension $S \NTermExt_\al (B_s^{b_s})_{s \in S(\al)}$ obtained by performing a (one-step) \emph{tree surgery} on $S$.
	Again, for simplicity we may suppose that $b_s = s$ and $S \cap B_s = \set{s}$ for every $s \in S(\al)$ and that the bushes $B_s$ are pairwise disjoint.
	There is a unique lexicographic strong tree structure on $S \cup \bigcup_{s \in S(\al)} B_s$ that is an extension of $S$ and of every $B_s$.
	The roots of the bushes $B_s$ form a new level immediately preceding $\al$, while the level $\al$ is extended by the top levels of the bushes $B_s$ with the nodes $b_s$ being identified with the old level $S(\al)$, see Figure~\ref{fig:surgery}.
	In the case that $\al$ is the root level, and so we have a single pointed bush $B^b$, we write just $S \NTermExt B^b$.
	Note that this non-terminal extension of $S$ is also the terminal extension $B \TermExt_b S$ of $B$.
	
	\begin{figure}[ht]
	\centering
	
	\begin{tikzpicture}[trees]
		\path[dot nodes] node (root) {}
			child[subtree={blue}{up triangle node}, shift children=-2.5\ru] {node {}
				child {node {}}
				child[subtree={black}{circle node}] {node[red, diamond node] {}
					child {node {}}
					child {node {}}
				}
				child {node {}}
			}
			child[subtree={blue}{up triangle node}] {node {}
				child[subtree={black}{circle node}] {node[red, diamond node] {}
					child {node {}}
					child {node {}}
				}
			}
			child[subtree={blue}{up triangle node}, shift children=2.5ex] {node {}
				child[subtree={black}{circle node}] {node[red, diamond node] {}
					child {node {}}
				}
				child {node {}}
			}
		;
		
		\node[left=1\ru] at (root-1-2-1) {$S$};
		\node[blue, below=1.5\ru] at (root-1-1) {$B_s$};
		\node[red, right=0.8\ru, fill=white, inner sep=0.5\ru, outer sep=0.4\ru] at (root-1-2) {$s$};
		
		\begin{scope}[on background layer]
			\path (root-1-1) ++(-3\ru, 0) coordinate (start)
				(root-3-2) ++(3\ru, 0) coordinate (end) node[right, red] {$\al$};
			\draw[dashed, red] (start) -- (end);
		\end{scope}
	\end{tikzpicture}
	
	\caption{A tree surgery $S \NTermExt_\al (B_s^s)_{s \in S(\al)}$.}
	\label{fig:surgery}
\end{figure}
	
	We may iterate one-step tree surgeries and consider
	\[
		\bigl(\underbrace{\bigl(S \NTermExt_\al (B_{1, s}^{b_{1, s}})_{s \in S(\al)}\bigr)}_{T_1} 
			\NTermExt_{\beta_1} \cdots \bigr) \NTermExt_{\beta_{k - 1}} (B_{k, s}^{b_{k, s}})_{s \in T_{k - 1}(\beta_{k - 1})}
	\]
	where $\beta_i$ is the new level in $T_i$ for $1 \leq i \leq k$.
	We may simplify this by introducing another helper notion.
	A \emph{bush-column} is a tree of the form
	\[
		C = \bigl(((B_0 \NTermExt B_1^{b_1}) \NTermExt \cdots ) \NTermExt B_k^{b_k}\bigr)
			= \bigl(B_k \TermExt_{b_k} B_{k - 1} \TermExt_{b_{k - 1}} \cdots \TermExt_{b_1} B_0\bigr),
	\]
	and a \emph{pointed bush-column} $C^c$ is a pair $(C, c)$ where $C$ is a bush-column and $c$ is a top-level node of $C$.
	Iterated tree surgery may be written as $S \NTermExt_\al (C_s^{c_s})_{s \in S(\al)}$ where $C_s^{c_s}$ for $s \in S(\al)$ are pointed bush-columns of the same height.
	Finally, we may perform tree surgery simultaneously on several levels and use the notation $S \NTermExt_{\al \in A} (C_{\al, s}^{c_{\al, s}})_{s \in S(\al)}$ or just $S \NTermExt_{\al \in A} C_\al$ for a set $A \subs \Lev(S)$ and a sequence of pointed bush-columns of the same height for every $\al \in A$.
\end{con}

\begin{prop} \label{thm:nonterminal_extension}
	Every extension $S \subs \bigl(S \NTermExt_{\al \in A} (C_{\al, s}^{c_{\al, s}})_{s \in S(\al)}\bigr)$ is non-terminal.
	On the other hand, for every non-terminal extension $S \subs T$, $T$ can be canonically written as $S \NTermExt_{\al \in A} (C_{\al, s}^{c_{\al, s}})_{s \in S(\al)}$.
	It follows that every non-terminal extension can be realized as a finite sequence/composition of one-step tree surgeries.
	
	\begin{proof}
		The first part is clear from the definition of tree surgery and Corollary~\ref{nonterminal}.
		For the second part let $S \subs T$ be a non-terminal extension.
		Let $N$ be the set of all nodes $t \in T \setminus S$ such that there is a node $s(t) \in S$ above $t$.
		Moreover, let $s(t)$ denote the least such node, which exists since $S$ is closed under meets.
		Also, let $N' := T \setminus (S \cup N)$, so we have a decomposition $T = S \cup N \cup N'$.
		By Corollary~\ref{nonterminal}, every $t \in N'$ has no $S$-node above and its predecessor is in $N$.
		It follows that $t$ is a terminal node of $T$ (its immediate successor would have to be in $N'$, which would contradict $t \in N'$).
		
		For every $t \in N$ let $B(t) \subs T$ be the bush consisting of $t$ and its immediate successors in $T$.
		We have $N' \subs \bigcup_{t \in N} B(t)$.
		Also, the top level of $B(t)$ may contain at most one node not in $N'$ – either $s(t)$ or another $t' \in N$ with $s(t') = s(t)$.
		This follows from the fact that $S$ is closed under meets.
		
		Next, observe that for every level $\al$ if $T(\al) \cap N \neq \emptyset$, then $T(\al) \subs N \cup N'$.
		This is because $t \leq s(t)$, so if $t \eqlv s' \in S$, then $t \in S$ as $S$ is closed under levels.
		So immediately below every level $\al \in A := \set{\lev_T(s(t)): t \in N}$, there are $T$-levels $\beta_{\al, 1} > \beta_{\al, 2} > \cdots > \beta_{\al, k_\al}$ consisting of new nodes.
		We have $N = \set{s\restr{\beta_{\al, i}}: \al \in A, s \in S(\al), 1 \leq i \leq k_\al}$.
		
		For every $\al \in A$ and $s \in S(\al)$ let $C_{\al, s} := \bigcup\set{B(t): t \in N \meet s(t) = s}$.
		Now it is clear that every $C_{\al, s}$ is a bush-column, every two bush-columns $C_{\al, s}, C_{\al, s'}$ have the same height, and $T = \bigl(S \NTermExt_{\al \in A} (C_{\al, c}^s)_{s \in S(\al)})\bigr)$.
	\end{proof}
\end{prop}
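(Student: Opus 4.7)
The plan is to handle the two directions separately, with the forward direction being a direct verification and the reverse being the substantive work. For the forward direction, I would verify via Corollary~\ref{nonterminal} that every extension produced by iterated tree surgery is non-terminal. Each new node planted inside a pointed bush-column $C_{\al,s}^{c_{\al,s}}$ either lies strictly below the identified node $s = c_{\al,s} \in S$ (hence is below an $S$-node, matching the first clause of the criterion) or is a top-level node of one of the constituent bushes whose immediate $T$-predecessor is already below $s$ (matching the second clause). Simultaneous surgery at different $S$-levels preserves the property because the columns at each chosen level are planted independently under their respective $S$-nodes.

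For the converse, given a non-terminal extension $S \subs T$, I would first partition the new nodes as $T \setminus S = N \cup N'$ where $N = \setof{t \in T \setminus S}{\Exists s \in S\ s \geq t}$ and $N' = (T \setminus S) \setminus N$. Meet-closure of $S$ inside $T$ lets me define $s(t) \in S$ as the least $S$-node above $t \in N$ for each $t \in N$. Corollary~\ref{nonterminal} then forces each $t' \in N'$ to have its immediate $T$-predecessor inside $N$ and to be itself a terminal node of $T$, since any $T$-successor of $t'$ would live in $N'$ by definition of $N$, contradicting the choice of $t'$. A second crucial observation is that any $T$-level meeting $N$ is entirely disjoint from $S$, because $S$ is closed under levels.

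This identifies the canonical parameter set $A \subs \Lev(S)$ as exactly those $S$-levels $\al$ such that some $T$-levels $\beta_{\al,1} > \cdots > \beta_{\al,k_\al}$ appear between $\al$ and the preceding $S$-level (or the bottom of $T$). Crucially, this list depends only on $\al$, not on $s \in S(\al)$, because $T$ has a single linearly ordered level set. For each such $\al$ and each $s \in S(\al)$ I would take $C_{\al,s}$ to be $\sn s$ together with all nodes $u \leq s$ at some level $\beta_{\al,i}$ and all nodes of $N'$ whose immediate predecessor lies in this set. A direct verification shows each $C_{\al,s}$ is a pointed bush-column with top $s$ and common height $k_\al + 1$, and that $T = \bigl(S \NTermExt_{\al \in A} (C_{\al,s}^{s})_{s \in S(\al)}\bigr)$; canonicity is built into the construction. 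The final claim then follows by performing the surgery one $\al \in A$ at a time from top to bottom, and decomposing a bush-column surgery at a single level into single-bush surgeries layer by layer.

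The main obstacle I expect is the careful bookkeeping in the reverse direction — in particular, confirming that different choices of $s \in S(\al)$ really yield bush-columns of equal height (which rests on the level observation above), and that the lexicographic and level structures carried by each $C_{\al,s}$, when re-assembled via $\NTermExt_\al$, faithfully reproduce those of $T$ rather than only an isomorphic copy. Once the sets $N$, $N'$, $\set{s(t)}$, and the new levels $\beta_{\al,i}$ are correctly identified, this is mostly a matter of unwinding definitions of strong subtrees and of the tree surgery construction.
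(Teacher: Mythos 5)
Your proposal follows essentially the same route as the paper's proof: the same partition $T \setminus S = N \cup N'$ with $s(t)$ the least $S$-node above $t \in N$, the same use of Corollary~\ref{nonterminal} to show $N'$ consists of terminal nodes with predecessors in $N$, the same level-closure observation forcing the new levels $\beta_{\al,1} > \cdots > \beta_{\al,k_\al}$ to be shared by all $s \in S(\al)$ (your $C_{\al,s}$ coincides with the paper's $\bigcup\set{B(t): t \in N,\ s(t)=s}$), and the same final decomposition into one-step surgeries. The argument is correct.
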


\begin{df}
	By a \emph{one-step extension} we mean either a one-step terminal extension or a one-step tree surgery.
	It follows that a \emph{one-step non-terminal extension} is just a different name for a one-step tree surgery.
\end{df}

\begin{wn} \label{thm:extension}
	Every extension $S \subs T$ is canonically of the form
	\[
		\bigl(S \NTermExt_{\al \in A} C_\al\bigr) \TermExt_{s \in B} T_s
	\]
	where $A$ is a set of $S$-levels, $B$ is a set of terminal nodes of the non-terminal extension tree, $C_\al$ are suitable sequences of pointed bush-columns, and $T_s$ are suitable trees.
	Moreover, every extension is a finite composition of one-step extensions.
\end{wn}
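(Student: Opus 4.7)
The plan is to simply chain together the three preceding propositions. First, given an extension $S \subs T$, apply Proposition~\ref{thm:extension_decomposition} to obtain the canonical intermediate tree $T' = S'' \subs T$ such that $S \subs T'$ is non-terminal and $T' \subs T$ is terminal. Both halves of this decomposition are uniquely determined by $S$ and $T$, so canonicity is inherited from the propositions we invoke.

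Next, I apply Proposition~\ref{thm:nonterminal_extension} to the non-terminal part $S \subs T'$ to obtain a canonical representation $T' = S \NTermExt_{\al \in A} (C_{\al, s}^{c_{\al, s}})_{s \in S(\al)}$, where $A$ is the set of $S$-levels immediately above which new nodes are added, and each $C_{\al, s}^{c_{\al, s}}$ is a pointed bush-column. Then I apply Proposition~\ref{thm:terminal_extension} to the terminal part $T' \subs T$ to obtain a canonical representation $T = T' \TermExt_{s \in B} T_s$, where $B$ is the set of terminal nodes of $T'$ that have proper extensions in $T$, and each $T_s$ is the corresponding nondegenerate subtree $[s, \above)_T$. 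Concatenating these gives the desired canonical form.

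For the final sentence, Proposition~\ref{thm:nonterminal_extension} already states that every non-terminal extension is a finite composition of one-step non-terminal extensions. The analogous statement for terminal extensions follows directly from Proposition~\ref{thm:terminal_extension}: writing $T = T' \TermExt_{s \in B} T_s$, each summand $T_s$ is itself a finite tree built by planting bushes as noted in the discussion following the definition of a one-step terminal extension, so the whole terminal extension is a finite sequence of one-step terminal extensions $S' \TermExt_{s} B_i$. Composing the two sequences yields $S \subs T$ as a finite composition of one-step extensions.

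There is no genuine obstacle here; the only thing worth double-checking is that the canonical forms of the two halves commute correctly with the decomposition of Proposition~\ref{thm:extension_decomposition}, i.e.\ that the set $B$ of terminal nodes and the trees $T_s$ are computed with respect to the intermediate tree $T'$ rather than $S$, which is exactly what Proposition~\ref{thm:terminal_extension} requires. This is immediate from the construction of $T'$ in the proof of Proposition~\ref{thm:extension_decomposition}.
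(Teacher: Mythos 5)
Your proposal is correct and is exactly the intended argument: the paper states this corollary without proof precisely because it is the immediate concatenation of Proposition~\ref{thm:extension_decomposition} with the canonical forms in Propositions~\ref{thm:nonterminal_extension} and~\ref{thm:terminal_extension}, together with the one-step decompositions already noted for each kind of extension. Your closing remark that $B$ and the trees $T_s$ must be computed relative to the intermediate tree $T'$ is the right point to check, and you handle it correctly.
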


\aseparator

Next we characterize amalgamable arrows in the categories $\Tw$, $\Tc$, and $\Ta$.
We are in a situation as in Section~\ref{sec:extensions}.
The situation can be summarized as follows.

\begin{tw} \label{thm:tree_amalgamation}
	Let $M \subs \Nat^+$ be nonempty.
	Both $\Tw$ and $\Ta$ are full cofinal subcategories of $\Tc$.
	$\Ta$ has the amalgamation property, so $\Ta \subs \am(\Tc)$.
	Moreover, every amalgamable $\Tc$-arrow factorizes through an amalgamable $\Tc$-object, and so (the full subcategory generated by) $\Tw \cup \Ta$ is an amalgamation extension of $\Tw$.
	
	\begin{enumerate}
		\item If $M = \set{m}$ for some $m \in \Nat^+$, then forgetting the decided splitting degree at terminal nodes is an isomorphism of categories $\Ta \to \Tw$, and the whole category $\Tc$ has the amalgamation property.
		\item Otherwise, if $\card{M} \geq 2$, then a $\Tc$-object is amalgamable if and only if it is fully decided, i.e. $\am(\Tc) = \Ta$.
		In particular, a $\Tw$-arrow $f\maps S \to T$ is amalgamable if and only if for every terminal node $s \in S$, $f(s)$ is not terminal in $T$.
	\end{enumerate}
\end{tw}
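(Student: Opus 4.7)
The plan is to build the whole theorem around one nontrivial claim—the amalgamation property of $\Ta$—and derive everything else from it together with cofinality. Fullness and cofinality of $\Tw, \Ta \subs \Tc$ are immediate: given $T \in \Tc$, assigning $R_n(t)$ with some $n \in M$ to each undecided terminal $t$ of $T$ yields an $\Ta$-extension (witnessing cofinality of $\Ta$), while planting a bush of degree $n$ above each $n$-decided terminal of $T$ (a one-step terminal extension) yields a $\Tw$-extension.

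To prove the amalgamation property of $\Ta$, I would induct on the total complexity of two given extensions $T \subs S_0, S_1$ in $\Ta$ (made inclusions WLOG), using the canonical decomposition of Corollary~\ref{thm:extension} to write each $T \subs S_i$ as a non-terminal part $T \subs T_i'$ followed by a terminal part $T_i' \subs S_i$. For the non-terminal parts I amalgamate level by level: between each pair of consecutive $T$-levels $\al^- < \al$, both $S_0$ and $S_1$ insert bush-columns; I stack one block above the other, but insert extra intermediate nodes (with decided splitting drawn from $M$) so that the $S_1$-added siblings of each $T$-node can be routed down to their $S_1$-bush-roots without violating the $\Spl$-bijections required for strong embeddings of $T_0'$ and $T_1'$. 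For the terminal parts, above each terminal $t \in T$ (decided of degree $n_t$), the $\Ta$-trees $U_i(t)$ planted at $t$ in $S_i$ are amalgamated recursively and glued back onto the amalgamated non-terminal extension. The combinatorial bookkeeping here is exactly where the main obstacle lies: strong embeddings demand $\Spl$-bijections, so the auxiliary witnesses added at each new level must be counted exactly, and the two incoming lexicographic orders on $\Spl$-classes must extend to one joint order.

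Given AP of $\Ta$, the rest is mechanical. For $T \in \Ta$ with $\Tc$-extensions $T \to X_0, X_1$, extend each $X_i$ to some $X_i' \in \Ta$ by the cofinality witness and amalgamate in $\Ta$; this shows $\Ta \subs \am(\Tc)$. Since every $\Tc$-arrow factorizes through an object of $\Ta \subs \am(\Tc)$ (again by cofinality), $\Tw \cup \Ta$ is an amalgamation extension of $\Tw$; the remaining fullness, cofinality, and amalgamability of objects in $\Ta \setminus \Tw$ inside $\Tw \cup \Ta$ all follow from the same cofinality argument applied inside $\Tw \cup \Ta \subs \Tc$. In case (i) where $M = \set{m}$, the forgetful $\Ta \to \Tw$ is a bijection on objects (the decidedness value is forced to be $m$) and is the identity on morphisms (no nontrivial $R_n$-preservation remains), whence the claimed isomorphism; $\Tc$ itself then has AP by the same decide-then-amalgamate trick.

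For case (ii) with $\card{M} \geq 2$, both $\am(\Tc) \subs \Ta$ and the characterization of amalgamable $\Tw$-arrows follow from a single obstruction argument. If $T \in \Tc$ has an undecided terminal $t$, picking distinct $m_0, m_1 \in M$ and forming $T \to T_i$ by adjoining $R_{m_i}(t)$ makes amalgamation impossible, since any amalgam would force both $R_{m_0}$ and $R_{m_1}$ at the image of $t$. Analogously, if a $\Tw$-arrow $f\maps S \to T$ sends some terminal $s \in S$ to a terminal $f(s) \in T$, planting bushes of distinct degrees $m_0, m_1 \in M$ above $f(s)$ in two $\Tw$-extensions forces the $\Spl$-bijection at the common image of $f(s)$ in any amalgam to have simultaneously $m_0$ and $m_1$ classes, a contradiction. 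Conversely, if every terminal $s \in S$ has $f(s)$ non-terminal in $T$, deciding each such $s$ with the value $\spl_T(f(s)) \in M$ produces $S' \in \Ta$ through which $f$ factors, so $f$ is amalgamable by the factorization criterion.
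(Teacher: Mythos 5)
Your overall architecture matches the paper's: cofinality by deciding terminals (for $\Ta$) and planting bushes (for $\Tw$), the core amalgamation via the decomposition of an extension into a non-terminal part followed by a terminal part, and the two-refinements obstruction for case~(ii). Two points need attention.

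First, the heart of the theorem is the amalgamation construction itself, and your proposal defers exactly that. You write that ``the combinatorial bookkeeping here is exactly where the main obstacle lies,'' but that bookkeeping \emph{is} the proof: the paper spends an entire multi-case construction (Construction~\ref{con:amalgamations}) on it, treating separately two terminal extensions at the same node, a terminal against a non-terminal extension, two non-terminal extensions at the same level (where the ``stacking'' you describe is realized by performing a tree surgery on $T_1$ with copies of the bush-columns of $T_2$ and then terminal-extending by copies of the bush-columns of $T_1$), and finally composing these along the canonical decompositions as in Corollary~\ref{thm:extension}. The subtle part you wave at --- that the new intermediate levels must carry exactly the right number of splitting witnesses, and that the non-gluing property must be maintained through the recursion so that later stages remain compatible --- is precisely what has to be verified, and your sketch does not do it. The strategy is right, but as written the central claim (AP of $\Ta$, or more generally that compatible pairs of extensions amalgamate) is asserted rather than proved.

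Second, your justification for the amalgamation-extension property contains a false step: it is not true that ``every $\Tc$-arrow factorizes through an object of $\Ta$.'' For instance, if $S$ has an undecided terminal, then $\id{S}$ does not factorize through any decided tree, since a $\Tc$-morphism out of an $\Ta$-object cannot hit an undecided terminal (it must preserve the relations $R_m$ and the splitting of non-terminal nodes). If your claim were true it would make every arrow amalgamable, contradicting case~(ii). What is actually needed --- and what the paper proves --- is that every \emph{amalgamable} $\Tc$-arrow factorizes through a decided tree, which follows from your own obstruction argument: an amalgamable arrow $\al$ must send every terminal to a decided node, and then $\al$ factors through the tree obtained by deciding each terminal $s$ of the domain with the value $\dspl(\al(s))$. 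So the conclusion is recoverable, but the stated reason is wrong and should be replaced by this argument. The remaining pieces (the reduction of $\Ta \subs \am(\Tc)$ to AP of $\Ta$ via cofinality and fullness, case~(i), and the characterization of amalgamable $\Tw$-arrows in case~(ii)) are correct and essentially identical to the paper's.
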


The goal is to prove the theorem above.

\begin{df}
	Let $f_1\maps S \to T_2$ and $f_2\maps S \to T_2$ be $\Tc$-arrows.
	By a \emph{node of incompatibility} for $f_1, f_2$ we mean a node $s \in S$ such that $f_1(s)$ and $f_2(s)$ are decided, but $\dspl(f_1(s)) \neq \dspl(f_2(s))$.
	Necessarily, $s$ is an undecided terminal node of $S$.
	The pair of embeddings $f_1, f_2$ is \emph{compatible} if there is no node of incompatibility.
\end{df}

\begin{prop}
	A pair of $\Tc$-embeddings $f_1\maps S \to T_1$ and $f_2\maps S \to T_2$ is amalgamable (i.e. there are embeddings $g_i\maps T_i \to T$, $i \in \set{1, 2}$, such that $g_1 \cmp f_1 = g_2 \cmp f_2$) if and only if the pair is compatible.
	
	\begin{proof}
		Clearly, if $s \in S$ is a node of incompatibility, then $\dspl(g_1(f_1(s))) = \dspl(f_1(s)) \neq \dspl(f_2(s)) = \dspl(g_2(f_2(s)))$, so $f_1, f_2$ cannot be amalgamated.
		The other, nontrivial, implication follows from Construction~\ref{con:amalgamations} below.
	\end{proof}
\end{prop}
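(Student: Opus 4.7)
The forward direction is immediate: a node $s \in S$ of incompatibility would, via the label-preserving $\Tc$-morphisms $g_1, g_2$, produce a single node $g_1(f_1(s)) = g_2(f_2(s))$ in any amalgamation $T$ carrying both labels $R_{m_1}$ and $R_{m_2}$ with $m_1 \ne m_2$, contradicting the requirement in $\Lab\FSplTree$ that at most one $R_n$ holds at each node.

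For the backward direction, I plan to construct $T$ explicitly after reducing to set-theoretic inclusions $S \subs T_1$ and $S \subs T_2$. The key structural idea is that the amalgamation does not need to be the ``naive'' union $T_1 \cup T_2$ identified on $S$: there may be non-$S$ nodes in $T_1$ that would otherwise have to share their image in $T$ with non-$S$ nodes of $T_2$, and this can cause label or split conflicts even when the pair is compatible at $S$. The remedy is to insert fresh intermediate levels and to use larger splits at newly inserted bush roots so that, at each class of $\Spl_T(s)$ above an $S$-node $s$, the images of the $T_1$-subtree and the $T_2$-subtree within that class occupy distinct child-nodes of a common new root, and hence do not share labels.

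The construction can be carried out by induction on the total number of one-step extensions needed to build $T_1$ and $T_2$ from $S$ via Corollary~\ref{thm:extension}. At each inductive step one handles a single one-step extension on the $T_2$-side (a labeling, a terminal planting, or a single tree surgery) and lifts it to a one-step extension of the current partial amalgamation. When $T_2$ decides a label at a terminal $s \in S$, we decorate the image of $s$ in the partial amalgamation consistently, which is possible by compatibility. When $T_2$ plants a bush above a terminal $s \in S$ that is still terminal in the current amalgamation, we graft the bush directly via $\TermExt$. When $T_2$ plants a bush at an $s$ that is already non-terminal in the current amalgamation (the splits at $s$ match by compatibility), we insert a fresh level immediately above $s$ and employ bush-columns there that carry both the existing $T_1$-children and the freshly planted $B$-top nodes as siblings of a common new root in each split class. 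The tree-surgery case $S^* = S \NTermExt_\al (B_s^{b_s})_{s \in S(\al)}$ is analogous: perform a matching surgery on the current amalgamation at the same abstract level $\al$, attaching the bushes $B_s$ above the $S$-nodes and single-successor bushes above the non-$S$ nodes at level $\al$, possibly refined with further inserted levels.

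The main obstacle will be precisely this refinement step: one must show that by inserting enough auxiliary levels and choosing large enough bush-columns, the desired lift always exists without violating any $\Tc$-structural condition (splits landing in $M$, the level structure remaining coherent, and the lexicographic order and labels preserved for both $g_1, g_2$). The compatibility assumption is used exactly once, as the guarantee that the decided split degrees of $S$-nodes agree between $T_1$ and $T_2$, so that the classes of $\Spl_T(s)$ can be set up consistently for both embeddings simultaneously and the lex orders on those classes can be matched. Verifying that the final $T$ is indeed a $\Tc$-object and that both $g_1, g_2$ are strong, lex-preserving, label-preserving embeddings with $g_1 \cmp f_1 = g_2 \cmp f_2$ is the technical heart of the argument, but it amounts to routine bookkeeping of levels and of the recursive amalgamation of bush-columns inside each split class.
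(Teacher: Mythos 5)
Your plan follows essentially the same route as the paper's Construction~\ref{con:amalgamations}: reduce to one-step terminal and non-terminal extensions via Corollary~\ref{thm:extension}, and build a \emph{non-gluing} amalgamation by tree surgery, inserting fresh levels whose bushes contain the $T_1$-image and the $T_2$-image of corresponding nodes as distinct siblings of a common new root; the only organizational difference is that you iterate the one-step extensions of $T_2$ against a growing partial amalgamation, whereas the paper does a case analysis on pairs of extension types and then composes the cases in a fixed order. One concrete correction: when you perform a surgery at a level $\al$, the bushes inserted above the nodes not participating in the $T_2$-extension cannot in general be ``single-successor bushes'' --- a root of splitting degree $1$ is forbidden unless $1 \in M$ --- so above those nodes you must insert bushes of some degree $m \in M$ with the existing node as one of the $m$ top-level children and the remaining children as fresh terminal nodes, exactly the padding you already use at the nodes where both sides meet. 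With that fix, and remembering that a surgery must be performed uniformly across an entire level (strong embeddings preserve levels, so you cannot refine the level structure above a single node only), the remaining verification is the bookkeeping you describe and matches the paper's cases (i)--(vii).
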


\begin{proof}[Proof of Theorem~\ref{thm:tree_amalgamation}]
	We already know that $\Tw, \Ta \subs \Tc$ are full.
	The cofinality of $\Ta$ is clear – we just refine the relations by adding $R_m(s)$ for a fixed $m \in M$ and every undecided terminal node $s$ of a given tree $S$.
	For the cofinality of $\Tw$ let $S$ be any $\Tc$-tree.
	We consider the terminal extension $S \TermExt_{s \in D} B_{\dspl(s)}$ where $D$ is the set of decided terminal nodes of $S$, and for every $m \in M$, $B_m$ is an undecided bush whose root has the splitting degree $m$.
	
	Clearly, every pair $f_i\maps S \to T_i$, $i \in \set{1, 2}$, of $\Tc$-arrows is compatible if $S$ is decided or if $M = \set{m}$, so every $\Ta$-object is amalgamable, and in the case~(i), $\Tc$-has the amalgamation property.
	
	The map $F\maps \Ta \to \Tw$ that forgets the decided splitting degrees at terminal nodes on objects and that acts as identity on arrows is always a faithful functor surjective on objects.
	But if $M = \set{m}$, then $F$ is also one-to-one on objects and full since there is only one way how to decide a splitting degree, and so $F$ is an isomorphism.
	
	Finally, if $m_1 \neq m_2 \in M$ and $\al\maps S \to S'$ is a $\Tc$-arrow such that $\al(s)$ is not decided for a node $s \in S$, then $f_1 \cmp \al, f_2 \cmp \al$ where $f_i\maps S' \to T_i$ is the refinement by $R_{m_i}(\al(s))$ are not amalgamable, and so $\al$ is not an amalgamable arrow.
	Hence, an amalgamable tree is necessarily decided in (ii), and every amalgamable arrow factorizes through an amalgamable object (which is trivial in (i)).
\end{proof}

It remains to construct an amalgamation $g_i\maps T_i \to T$, $i \in \set{1, 2}$, for a pair $f_i\maps S \to T$ of compatible embeddings.
As before, we may suppose for simplicity that $f_i$ is an extension $S \subs T_i$.
We will be able to construct an amalgamation such that $g_1(t_1) = g_2(t_2)$ if and only if $t_i = f_i(s)$ for a node $s \in S$ and $i \in \set{1, 2}$.
In this case, the embeddings $g_i$ may be replaced by extensions $T_i \subs T$, so we would have $T_1 \cup T_2 \subs T$ with $T_1 \cap T_2 = S$.
We shall call such situation a \emph{non-gluing amalgamation}.

We have seen in Corollary~\ref{thm:extension} that the extension $S \subs T_i$ may be viewed as a result of a certain construction: $T_i = E_i(S)$.
It may happen that the same construction $E_i$ can be applied also to an extended tree $S' \sups S$.
A non-gluing amalgamation is a \emph{free amalgamation} if $E_1(E_2(S)) = E_2(E_1(S)) = T_1 \cup T_2 = T$.
This happens for example if $T_i = S \TermExt_{s \in A_i} V_s$ for $A_1 \cap A_2 = \emptyset$.

\begin{con} \label{con:amalgamations}
	Let $S \subs T_i$, $i \in \set{1, 2}$, be compatible extensions.
	We shall construct a non-gluing amalgamation $T \sups T_1 \cup T_2$.
	We consider several cases.
	
	\begin{enumerate}
		\item Suppose $T_i = S \TermExt_s B_i$ for $i = 1, 2$ are one-step terminal extensions planting at the same terminal node $s \in S$.
			Clearly, if $\spl_{T_1}(s) \neq \spl_{T_2}(s)$, then there is no amalgamation.
			But this is not the case since the extensions are compatible.
			It follows that the bushes $B_i$ are isomorphic.
			Of course we could amalgamate simply by identifying the bushes $B_i$, but we are interested in a non-gluing amalgamation since the bushes $B_i$ may be just parts of bigger extensions we consider in later cases.
			
			We may suppose $B_1 \cap B_2 = \set{s}$.
			Let $\al$ denote the top level of the bushes $B_i$ as well as the corresponding level of $T_i$.
			Hence, $T_i(\al)$ is the disjoint union $S(\al) \cup B_i(\al)$ with the possibility that $S(\al)$ is empty.
			Let $(b_{i, j}: j < k)$ for $i = 1, 2$ be the $\leq\lex$-increasing enumeration of $B_i(\al)$.
			Furthermore, for every $s \in S(\al)$ let $C_{1, s} = C_{2, s}$ be a bush containing $s$ at the top level,
			and for every $j < k$ let $C_{1, b_{1, j}} = C_{2, b_{2, j}}$ be a bush containing $b_{1, j}$ and $b_{2, j}$ as distinct top-level nodes (here we suppose $M \neq \set{1}$, otherwise the amalgamation is done as for linear orders).
			Suppose the bushes are otherwise disjoint with all the other bushes considered.
			Our amalgamation is 
			\[
				T :=\, \underbrace{(S \TermExt_s B_1)}_{T_1} \NTermExt_\al (C_{1, t}^t)_{t \in T_1(\al)}
				\,=\, \underbrace{(S \TermExt_s B_2)}_{T_2} \NTermExt_\al (C_{2, t}^t)_{t \in T_2(\al)}
			\]
			as in Figure~\ref{fig:TT_amalgamation}.
			Note that the extensions $T_i \subs T$ for $i = 1, 2$ are one-step non-terminal.
			
			\begin{figure}[ht]
	\centering
	
	\begin{tikzpicture}[
			trees,
			glued/.style 2 args = {fill=#1, draw=#2, ultra thick},
		]
		
		\path[dot nodes] node (T1) at (-25ex, 2.5\ru) {}
			child[subtree={green}{left triangle node}] {node {}
				child {node {}}
				child {node {}}
				child {node {}}
			}
			child[shift children=2.5\ru] {node {}
				child {node {}}
			}
		;
		
		\path[dot nodes,
			level 1/.style = {sibling distance = 9\ru},
			level 2/.style = {sibling distance = 6\ru},
			level 3/.style = {sibling distance = 3\ru},
		] node (T) {}
			child[subtree={green}{left triangle node}] {node {}
				child[subtree={blue}{diamond node}] {node {}
					child {node[green, left triangle node] {}}
					child {node[red, right triangle node] {}}
				}
				child[subtree={blue}{diamond node}] {node {}
					child {node[green, left triangle node] {}}
					child {node[red, right triangle node] {}}
				}
				child[subtree={blue}{diamond node}] {node {}
					child {node[green, left triangle node] {}}
					child {node[red, right triangle node] {}}
				}
			}
			child[shift children=3\ru] {node {}
				child[subtree={blue}{diamond node}] {node {}
					child {node[black, circle node] {}}
					child {node {}}
				}
			}
		;
		
		\path[dot nodes] node (T2) at (25ex, 2.5\ru) {}
			child[subtree={red}{right triangle node}] {node {}
				child {node {}}
				child {node {}}
				child {node {}}
			}
			child[shift children=2.5\ru] {node {}
				child {node {}}
			}
		;
		
		\node[green, left=1.5\ru] at (T1-1) {$B_1$};
		\node[red, left=1.5\ru] at (T2-1) {$B_2$};
		\node[blue, left=1\ru] at (T-1-1) {$C_{i, t}$};
	\end{tikzpicture}
	
	\caption{An amalgamation of two one-step terminal extensions.}
	\label{fig:TT_amalgamation}
\end{figure}
		
		\item To amalgamate general terminal extensions $S \TermExt_s V_i$ at the same node $s \in S$ for $i = 1, 2$ we write $V_i$ as $B_i \TermExt_{t \in A_i} V'_t$ where $B_i$ is the root bush of $V_i$ and $A_i \subs B_i$ is a subset of the top level of the bush (we suppose that $V_1 \cap V_2 = \set{s}$, and so $A_1 \cap A_2 = \emptyset$).
		Then we consider the free amalgamation $S' \TermExt_{t \in A_1 \cup A_2} V'_t$ of the extensions $S' \TermExt_{t \in A_i} V'_t$ for $i = 1, 2$ where $S'$ is a non-gluing amalgamation of $S \TermExt_s B_1$ and $S \TermExt_s B_2$ from (i).
		
		\item To amalgamate completely general terminal extensions $S \TermExt_{s \in A_i} V_{i, s}$ for $i = 1, 2$ we consider the free amalgamation $T := S \TermExt_{s \in A_1 \cup A_2} V'_s$ of the extensions $S \TermExt_s V'_s$ for $s \in A_1 \cup A_2$ where 
			\[
				S \TermExt_s V'_s = \begin{cases}
					S \TermExt_s V_1 & \text{ if $s \in A_1 \setminus A_2$}, \\
					S \TermExt_s V_2 & \text{ if $s \in A_1 \setminus A_2$}, \\
					\text{the amalgamation of $S \TermExt_s V_1$ and $S \TermExt_s V_2$ from (ii)} & \text{ if $s \in A_1 \cap A_2$}.
				\end{cases}
			\]
			This is a free amalgamation of the original extensions $S \TermExt_{s \in A_i} V_{i, s}$ if and only if $A_1 \cap A_2 = \emptyset$.
		
		\item If $T_1 = (S \TermExt_{a \in A} V_a)$ is a terminal extension and $T_2 = \bigl(S \NTermExt_{\beta \in B} (C_{\beta, s}^s)_{s \in S(\beta)}\bigr)$ is a non-terminal extension, then there is a non-gluing amalgamation $T$ such that $T_1 \subs T$ is non-terminal and $T_2 \subs T$ is terminal.
			
			For every $a \in A$ let $B_a := \set{\beta \in B: V_a \cap T_1(\beta) \neq \emptyset}$ and
			for every $\beta \in B$ let $A_\beta := \set{a \in A: V_a \cap T_1(\beta) \neq \emptyset}$.
			We have $T_1(\beta) = S(\beta) \cup \bigcup_{a \in A_\beta} V_a(\beta)$.
			For every $\beta \in B$ and $t \in T_1(\beta)$ let $\bar{C}_{\beta, t}$ be the bush-column $C_{\beta, t}$ if $t \in S(\beta)$, or a new bush-column of the same height containing $t$ as a top-level node.
			For every $a \in A$ we put $\bar{V}_a := V_a \NTermExt_{\beta \in B_a} (\bar{C}_{\beta, t}^t)_{t \in V_a(\beta)}$.
			We consider the amalgamation
			\[
				T :=\, \underbrace{(S \TermExt_{a \in A} V_a)}_{T_1}
					\NTermExt_{\beta \in B} (\bar{C}_{\beta, t}^t)_{t \in T_1(\beta)}
				\,=\, \underbrace{\bigl(S \NTermExt_{\beta \in B} (C_{\beta, s}^s)_{s \in S(\beta)}\bigr)}_{T_2} 
					\TermExt_{a \in A} \bar{V}_a
			\]
			as in Figure~\ref{fig:NT_amalgamation}.
			The amalgamation is free if and only if every $V_a$ is disjoint with every $T_1(\beta)$.
			
			\begin{figure}[ht]
	\centering
	
	\begin{tikzpicture}[trees]
		
		\path[dot nodes,
			level 1/.style = {sibling distance = 9\ru},
			level 2/.style = {sibling distance = 3.5\ru},
		] node (T1) at (-24ex, 2.5\ru) {}
			child[subtree={green}{left triangle node}] {node {}
				child {node {}}
				child {node {}}
			}
			child {node {}
				child {node {}}
				child {node {}}
				child {node {}}
			}
		;
		
		\path[dot nodes,
			level 1/.style = {sibling distance = 12\ru},
			level 3/.style = {sibling distance = 3\ru},
		] node (T) {}
			child[subtree={green}{left triangle node}] {node {}
				child[subtree={blue}{diamond node}, shift children=-2\ru] {node {}
					child {node[green, left triangle node] {}}
				}
				child[subtree={blue}{diamond node}, shift children=-2\ru] {node {}
					child {node[green, left triangle node] {}}
					child {node {}}
				}
			}
			child {node {}
				child[subtree={red}{right triangle node}] {node {}
					child {node[black, circle node] {}}
					child {node {}}
				}
				child[subtree={red}{right triangle node}] {node {}
					child {node[black, circle node] {}}
				}
				child[subtree={red}{right triangle node}] {node {}
					child {node[black, circle node] {}}
					child {node {}}
				}
			}
		;
		
		\path[dot nodes,
			level 1/.style = {sibling distance = 9\ru},
			level 3/.style = {sibling distance = 3\ru},
		] node (T2) at (26ex, 0) {}
			child {node {}}
			child {node {}
				child[subtree={red}{right triangle node}] {node {}
					child {node[black, circle node] {}}
					child {node {}}
				}
				child[subtree={red}{right triangle node}] {node {}
					child {node[black, circle node] {}}
				}
				child[subtree={red}{right triangle node}] {node {}
					child {node[black, circle node] {}}
					child {node {}}
				}
			}
		;
		
		\node[green, left=1\ru] at (T1-1) {$V_a$};
		\node[red, left=1\ru] at (T2-2-1) {$C_{\beta, s}$};
		\node[red, right=1\ru] at (T-2-3) {$\bar{C}_{\beta, t}$};
		\node[green, left=1\ru] at (T-1) {$\bar{V}_a$};
	\end{tikzpicture}
	
	\caption{An amalgamation of a terminal and a non-terminal extension.}
	\label{fig:NT_amalgamation}
\end{figure}
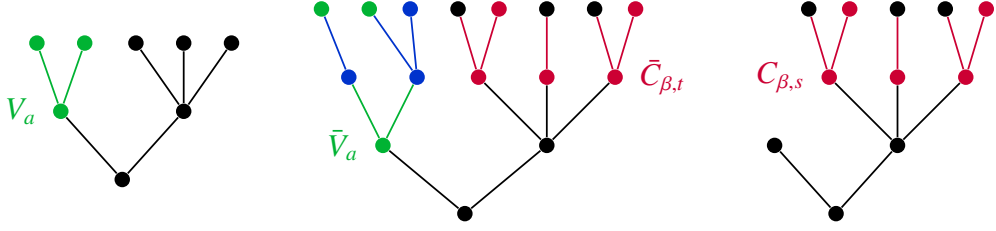
		
		\item Suppose $T_1 = \bigl(S \NTermExt_\al (C_s^s)_{s \in S(\al)}\bigr)$ and $T_2 = \bigl(S \NTermExt_\al (D_s^s)_{s \in S(\al)}\bigr)$ are two non-terminal extensions extending the same level $\al$.
			
			Let $\beta_{i, 0} < \beta_{i, 1} < \cdots < \beta_{i, k_i}$, $i \in \set{0, 1}$, be the enumeration of the levels of the bush-columns $C_s$ and $D_s$, respectively, so in $T_i$ the newly added levels are $\beta_{i, j}$, $j < k_i$, while $\beta_{i, k_i} = \al$.
			In our amalgamation $T$ of $T_1$ and $T_2$ we shall have $\beta_{2, 0} < \beta_{2, 1} < \cdots < \beta_{2, k_2 - 1} < \beta_{1, 0} < \beta_{1, 1} < \dots < \beta_{1, k_1} = \al = \beta_{2, k_2}$.
			
			For every $t \in T_2(\al) = \bigcup_{s \in S(\al)} D_s(\al)$ let $\bar{C}_t$ denote the bush-column $C_t$ if $t \in S(\al)$, or a completely new bush-column of the same height with $t$ being a top-level node.
			Let $r(t)$ denote the root of $\bar{C}_t$.
			Moreover, for every $s \in S(\al)$ let $\bar{D}_s$ be an isomorphic copy of the bush-column $D_s$ with every top-level node $t$ replaced by $r(t)$.
			We suppose all the bush-columns are as disjoint as needed.
			
			We shall write the amalgamation $T$ in two ways so it is clear that it is an extension of both $T_1$ and $T_2$ agreeing on $S$.
			Let $\beta := \beta_{1, 0}$.
			First we consider the extension $T' := T_1 \NTermExt_\beta (\bar{D}_{s(r)}^r)_{r \in T_1(\beta)}$ where $s(r)$ denotes the unique $s \in S(\al)$ with $r(s) = r$.
			Hence, $\beta$ becomes the top level of the bush-columns $\bar{D}_s$ in $T'$.
			For every $r \in T'(\beta) = \bigcup_{s \in S(\al)} \bar{D}_s(\beta)$ let $t(r)$ denote the unique $t \in T_2(\al)$ with $r(t) = r$.
			Our amalgamation is
			\let\displaystyle\textstyle
			\begin{align*}
				T :\kern-0.5ex&= \overbrace{\bigl(
						\underbrace{\bigl(S \NTermExt_\al (C_s^s)_{s \in S(\al)}\bigr)}_{T_1} 
						\NTermExt_\beta (\bar{D}_{s(r)}^r)_{r \in T_1(\beta)}
					\bigr)}^{T'}
					\TermExt_{r \in T'(\beta) \setminus T_1(\beta)} \bar{C}_{t(r)} \\
				&=
				\underbrace{\bigl(S \NTermExt_\al (D_s^s)_{s \in S(\al)}\bigr)}_{T_2} \NTermExt_\al (\bar{C}_t^t)_{t \in T_2(\al)}
			\end{align*}
			as in Figure~\ref{fig:NN_amalgamation}.
			Note that the expansion $T_2 \subs T$ is non-terminal and that the amalgamation $T \sups T_1 \cup T_2$ is non-gluing.
			
			\begin{figure}[ht]
	\centering
	
	\begin{tikzpicture}[trees]
		
		\path[dot nodes, 
			level 1/.style = {sibling distance = 5\ru},
			subtree={green}{left triangle node},
		] node (T1) at (-26ex, 2.5\ru) {}
			child {node {}}
			child[subtree={black}{circle node}] {node {}
				child {node {}}
				child {node {}}
			}
		;
		
		\path[dot nodes,
			subtree={red}{right triangle node},
		] node (T) {}
			child[subtree={blue}{diamond node}, shift children=-2.5\ru] {node {}
				child {node {}}
				child {node[red, right triangle node] {}}
			}
			child[subtree={green}{left triangle node}] {node {}
				child {node {}}
				child[subtree={black}{circle node}] {node {}
					child {node {}}
					child {node {}}
				}
			}
			child[subtree={blue}{diamond node}] {node {}
				child {node[red, right triangle node] {}}
			}
		;
		
		\path[dot nodes, 
			level 1/.style = {sibling distance = 5\ru},
			subtree={red}{right triangle node},
		] node (T2) at (24ex, 2.5\ru) {}
			child {node {}}
			child[subtree={black}{circle node}] {node {}
				child {node {}}
				child {node {}}
			}
			child {node {}}
		;
		
		\node[green, left=1\ru] at (T1) {$C_s$};
		\node[red, left=1\ru] at (T2) {$D_s$};
		\node[blue, left=1.5\ru] at (T-1) {$\bar{C}_t$};
		\node[red, right=2\ru] at (T) {$\bar{D}_s$};
	\end{tikzpicture}
	
	\caption{An amalgamation two non-terminal extensions.}
	\label{fig:NN_amalgamation}
\end{figure}
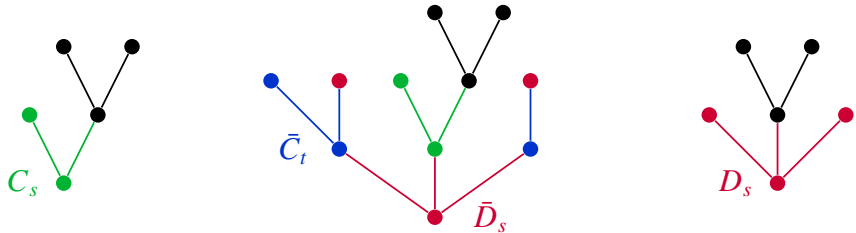
		
		\item To amalgamate general non-terminal extensions $T_i = (S \NTermExt_{\al \in A_i} C_{i, \al})$ for $i = 1, 2$ we consider the free amalgamation $T := (S \NTermExt_{\al \in A_1 \cup A_2} \bar{C}_\al)$ of the extensions $S \NTermExt_\al \bar{C}_\al$ for $\al \in A_1 \cup A_2$ where 
			\[
				S \NTermExt_\al \bar{C}_\al = \begin{cases}
					S \NTermExt_\al C_{1, \al} & \text{ if $\al \in A_1 \setminus A_2$}, \\
					S \NTermExt_\al C_{2, \al} & \text{ if $\al \in A_1 \setminus A_2$}, \\
					\text{the amalgamation of $S \NTermExt_\al C_{1, \al}$ and $S \NTermExt_\al C_{2, \al}$ from (v)} 
						& \text{ if $\al \in A_1 \cap A_2$}.
				\end{cases}
			\]
			This is always a non-gluing amalgamation of the original extensions $S \NTermExt_{\al \in A_i} C_{i, \al}$, and it is a free amalgamation  if and only if $A_1 \cap A_2 = \emptyset$.
			Note that by (v) one of $T_i \subs T$ is non-terminal.
		
		\item Finally, for general extensions $S \subs T_i$ for $i = 1, 2$ we consider the canonical decompositions $S \subs S_i \subs T_i$ such that $S \subs S_i$ is non-terminal and $S_i \subs T_i$ is terminal, and we perform several immediate amalgamations as in Figure~\ref{fig:complete_amalgamation}.
			By (vi) we take an amalgamation $S_1 \cup S_2 \subs S'$ such that $S_1 \subs S'$ is non-terminal, and we consider the canonical decomposition $S_2 \subs S'' \subs S'$, so $S_2 \subs S''$ is non-terminal and $S'' \subs S'$ is terminal.
			By (iv) we take an amalgamation $T_1 \cup S' \subs T_1'$, so $S' \subs T'_1$ and $S'' \subs T'_1$ are terminal.
			Again by (iv) we take an amalgamation $S'' \cup T_2 \subs T_2''$, so $S'' \subs T_2''$ is terminal.
			We obtain $T$ as the amalgamation of $T_1' \cup T_2''$ performed by (iii).
			Note that since all the immediate amalgamations are non-gluing, the compatibility of $S \subs T_1$ and $S \subs T_2$ implies the compatibility of $S'' \subs T_1'$ and $S'' \subs T_2''$.
			Also, the resulting amalgamation is non-gluing.
			
			\begin{figure}[ht]
	\centering
	
	\begin{tikzpicture}[
			text diagram,
			x = {(-6ex, 4ex)},
			y = {(6ex, 4ex)},
		]
		\node (S) at (0, 0) {$S$};
		\node (S1) at (2, 0) {$S_1$};
		\node (T1) at (3, 0) {$T_1$};
		\node (S2) at (0, 1) {$S_2$};
		\node (S'') at (1, 1) {$S''$};
		\node (S') at (2, 1) {$S'$};
		\node (T1') at (3, 1) {$T_1'$};
		\node (T2) at (0, 2) {$T_2$};
		\node (T2'') at (1, 2) {$T''_2$};
		\node (T) at (3, 2) {$T$};
		
		\graph{
			(S) -> (S1) -> (T1) -> (T1') -> (T),
			(S) -> (S2) -> (T2) -> (T2'') -> (T),
			(S2) -> (S'') -> (S') -> (T1'),
			(S1) -> (S'),
			(S'') -> (T2''),
		};
	\end{tikzpicture}
	
	\caption{The composition of immediate amalgamations for decomposed extensions.}
	\label{fig:complete_amalgamation}
\end{figure}
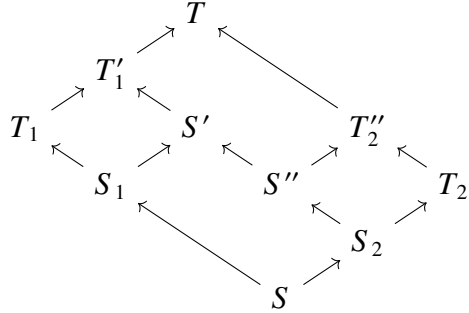
	\end{enumerate}
\end{con}

\subsubsection{Countable trees and the generic tree}

We have shown that $\Tw$ is a weak Fraïssé category, that $\Ta$ is a Fraïssé category, and that both are full cofinal subcategories of $\Tc$.
Let $\omega\Tw$, $\omega\Ta$, and $\omega\Tc$ be the corresponding categories of countable strong trees.
Note that the objects of $\omega\Tw \cap \omega\Ta$ are exactly trees from $\omega\Tc$ with no terminal nodes.

Since $\omega\Tc$ is a category of first-order structures and all one-to-one homomorphisms (not necessarily embeddings since deciding a splitting degree of a terminal node is allowed), colimits of sequences essentially correspond to unions of increasing chains.
Note that not every countable tree is a colimit of a sequence finite trees – the colimits are exactly \emph{locally finite} trees, i.e. trees $T$ such that for every finite subset $F \subs T$ there is a subtree $S \subs T$ containing $F$.
In our case of lexicographic strong subtrees, we must close $F$ under the meet operation and under levels, which is trivial, but also add witnessing nodes for splitting at non-terminals, which may complicate things.

\begin{ex}
	Consider the tree $T = \set{0, 0', 1, 1', \ldots, \omega}$ where every node $n \in \nat$ has two immediate successors: $n + 1$ and $n'$, every node $n'$ is terminal, and $\omega$ is a terminal node above the chain $\set{0 < 1 < \ldots}$.
	The subset $F := \set{0, \omega}$ is not contained in any finite strong subtree $S \subs T$.
	This is because $S$ would need to contain $0'$ as the witness of the splitting at $0$, and then $1$ as the node below $\nat$ on the level of $0'$, and then $1'$ as the witness of the splitting at $1$, and so on.
\end{ex}

\begin{lm} \label{balanced_locally_finite}
	Balanced trees from $\omega\Tc$ are locally finite.
	More precisely, let $T$ be an $\omega\Tc$-object.
	For every finite $F \subs T$ there is a $\Tc$-object $S$ with $F \subs S \subs T$.
	Moreover, if $T$ is in $\omega\Tw$ or $\omega\Ta$, then $S$ can be taken in $\Tw$ or $\Ta$, respectively.
	
	\begin{proof}
		We proceed in several steps.
		Let $S_0$ be the closure of $F$ under the meet operation in $T$.
		Clearly $S_0$ is finite, it becomes a tree when endowed with the $\set{\leq, \meet}$-structure inherited from $T$, and the inclusion $S_0 \subs T$ is a $\Tree$-arrow.
		
		Let $S_1$ be the closure of $S_0$ under $T$-levels, i.e. $S_1 := \set{s'\restr{\lev_T(s)}: s, s' \in S_0$ with $\lev_T(s) \leq \lev_T(s')}$.
		It is easy to see that $S_1$ is finite, that it becomes a leveled tree with the inherited $\set{\leq, \meet, \eqlv}$-structure, and that the inclusion $S_1 \subs T$ is a $\LevTree$-arrow.
		
		For every non-terminal node of $s \in S_1$, let $\al_s \in \Lev(S_1) \subs \Lev(T)$ denote the level corresponding to immediate successors of $s$ in $S_1$, and let $A_s \subs T(\al_s)$ be a transversal of $\Spl_T(s)$ extending $S_1(\al_s)$.
		Such transversal exists since $T$ is balanced.
		We put $S := S_1 \cup \bigcup_{s \in S_1} A_s$ and endow with the inherited $\set{\leq, \meet, \eqlv, \leq\lex, \set{R_m}_{m \in M}}$-structure.
		We have that $S$ is a $\Tc$-object and that the inclusion $S \subs T$ is an $\omega\Tc$-arrow.
		If $T$ is an $\omega\Ta$-object, then $S$ is a $\Ta$-object.
		If $T$ is an $\omega\Tw$-object, then $S$ is a $\Tw$-object after forgetting the relations $R_m$.
	\end{proof}
\end{lm}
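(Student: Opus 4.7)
The plan is to build $S$ by three successive finite closures of $F$ inside $T$, each dictated by a different clause of the $\Tc$-structure. First I would form the meet-closure $S_0 \sups F$, which is finite because meet-closing a finite subset of a tree adds at most linearly many elements. Then I would close $S_0$ under the levels appearing in $S_0$ by setting $S_1 := \setof{s'\restr{\lev(s)}}{s, s' \in S_0,\ \lev(s) \leq \lev(s')}$; each required node exists in $T$ since $T$ is leveled, and $S_1$ remains meet-closed and becomes closed under the finite level set $\Lev(S_1) = \lev[S_0]$.

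The third step is the only nontrivial one. For each $s$ non-terminal in $S_1$, let $\al_s$ be the next $S_1$-level above $\lev(s)$, and enlarge $S_1$ by a transversal $A_s \subs T(\al_s)$ of $\Spl_T(s)$ chosen to contain every $S_1$-successor of $s$ already at level $\al_s$. Since $\spl_T(s) \in M$ is a positive integer, $A_s$ is finite, so $S := S_1 \cup \bigcup_s A_s$ is finite. A routine check, exploiting that the newly added nodes for a given $s$ sit at a single level $\al_s$ in pairwise distinct $\Spl_T(s)$-classes above $s$, shows that the inclusion $S \subs T$ preserves meets, order, levels, and the lexicographic order, and by construction $\spl_S(s) = \card{A_s} = \spl_T(s)$ for every $s$ non-terminal in $S$; hence $S$ is a $\Tc$-object containing $F$.

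The main obstacle is producing the transversals, and this is the only place where the \emph{balancedness} hypothesis enters. For each class $[x] \in \Spl_T(s)$ I need a representative at level $\al_s > \lev(s)$: if $\lev(x) \geq \al_s$ then $x\restr{\al_s}$ works, with $x \meet x\restr{\al_s} = x\restr{\al_s} > s$ keeping it in the class $[x]$; otherwise balancedness supplies some $y > x$ with $\lev(y) = \al_s$, and then $y \meet x = x > s$ again places $y$ in $[x]$. Without balancedness one could genuinely be stuck at a level $\al_s$ on which some class $[x]$ is empty.

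For the moreover clause I would simply inherit the unary relations $R_m$ from $T$. If $T \in \omega\Ta$, every terminal node of $S$ is either terminal in $T$ (already decided there) or non-terminal in $T$ (decided by the $\Tc$-axiom $R_{\spl_T(\cdot)}$), giving $S \in \Ta$. If $T \in \omega\Tw$, the inherited $R_m$ labels on nodes that were non-terminal in $T$ but became terminal in $S$ are incompatible with $\Tw$; forgetting the $R_m$-structure at those terminal nodes of $S$ removes the labels and returns a genuine $\Tw$-object.
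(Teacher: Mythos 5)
Your proof is correct and follows essentially the same route as the paper's: the meet-closure $S_0$, then the level-closure $S_1$, then enlarging by transversals $A_s \subs T(\al_s)$ of $\Spl_T(s)$ whose existence is exactly where balancedness is used, with the same treatment of the relations $R_m$ for the $\Ta$ and $\Tw$ cases. Your explicit case split showing how balancedness produces a representative of each splitting class at level $\al_s$ is a welcome elaboration of a step the paper leaves implicit, but the argument is the same.
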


Let $\sigma\Tw$, $\sigma\Tc$, and $\sigma\Tw$ denote the full subcategories of all locally finite trees in $\omega\Tw$, $\omega\Tc$, and $\omega\Tw$, respectively.
Every sequence in $\Tw$ has a colimit in $\sigma\Tw$, and every $\sigma\Tw$-object is a colimit of a $\Tw$-sequence.
Also, every coned sequence in $(\Tw, \sigma\Tw)$ is a matching sequence, see Construction~\ref{sigma_closure}.
The same is true for $(\Tc, \sigma\Tc)$ and $(\Ta, \sigma\Ta)$.
It is not hard to see that both $\sigma\Tw$ and $\sigma\Ta$ are cofinal in $\sigma\Tc$.
Therefore, by the general theory, there is a unique (up to isomorphism) object $U_M$ in $\sigma\Tc$ that is a weak Fraïssé limit in $(\Tc, \sigma\Tc)$ as well as in $(\Tw, \sigma\Tw)$ and a Fraïssé limit in $(\Ta, \sigma\Ta)$.
Moreover, by Theorem~\ref{thm:KPT} $\aut(U_M)$ is extremely amenable if and only if $\Tw$ has the weak Ramsey property if and only if $\Ta$ has the Ramsey property.

\begin{prop}
	The generic tree $U_M$ is cofinal in $\omega\Tc$.
	
	\begin{proof}
		The tree $U_M$ is cofinal in $\sigma\Ta$ as a Fraïssé limit in $(\Ta, \sigma\Ta)$.
		Moreover, it is cofinal in $\sigma\Tc$ since $\sigma\Ta$ is cofinal in $\sigma\Tc$.
		It is enough to prove that every tree $S$ in $\omega\Tc$ can be strongly embedded into a balanced countable tree $T$ since by Lemma~\ref{balanced_locally_finite} $T$ is in $\sigma\Tc$.
		
		By saying that $S$ is \emph{balanced at a node $s$} we mean that for every level $\al > \lev(s)$ there is $s' > s$ such that $\lev(s') = \al$.
		Suppose $a \in S$ is a terminal node at which $S$ is not balanced and let $Y := [\lev(a), \above)_{\Lev(S)}$.
		By Construction~\ref{con:XY_tree} there is a balanced tree $T_a$ in $\omega\Tw$ with $\Lev(T_a) = Y$ (just take $V_{m, Y}$ where $M \ni m = \set{0 < \cdots < m - 1}$).
		By planting the tree $T_a$ at $a \in S$, i.e. by considering an infinite version of a terminal extension $S \TermExt_a T_a$, we obtain a strong supertree of $S$ balanced at $a$ as well as at every newly added node.
		
		Similarly, suppose that $B \subs S$ is an unbounded branch (i.e. a maximal linearly ordered subset without maximum) such that the upper set $Y := \Lev(S) \setminus \lev[B]$ is nonempty (where $\lev[B] = \set{\lev(b): b \in B}$).
		We shall again consider a balanced tree $T_B$ in $\omega\Tw$ with $\Lev(T_s) = Y$ and plant it above $B$, i.e. all nodes of $T_B$ will be strictly above all nodes of $B$.
		The resulting tree $S \TermExt_B T_B$ is balanced at every node in $B$ as well as at every newly added node.
		
		Together, there is a set $A \subs S$ of terminal nodes and a countable set $\mathcal{B}$ of unbounded branches such that every node $s \in S$ at which $S$ is not balanced is below a node $a \in A$ or contained in a branch $B \in \mathcal{B}$.
		The tree $T := S \TermExt_{a \in A} T_a \TermExt_{B \in \mathcal{B}} T_B$ is the desired balanced extension.
	\end{proof}
\end{prop}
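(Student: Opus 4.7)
The plan is to reduce the claim to two already-established facts plus a single extension construction. First, because $U_M$ is a Fraïssé limit in $(\Ta, \sigma\Ta)$, Theorem~\ref{thm:Flim}(a) tells us that $U_M$ is cofinal in $\sigma\Ta$. Since $\sigma\Ta$ is in turn cofinal in $\sigma\Tc$ (every tree in $\sigma\Tc$ is locally finite and its terminal nodes can be decided by some $R_m$ with $m \in M$), this gives cofinality of $U_M$ in $\sigma\Tc$. So the task collapses to showing that every $S \in \omega\Tc$ admits a strong embedding into some object of $\sigma\Tc$.

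Next, I would invoke Lemma~\ref{balanced_locally_finite}: every balanced tree in $\omega\Tc$ is locally finite, hence lies in $\sigma\Tc$. Thus it suffices to strongly embed an arbitrary $S \in \omega\Tc$ into a balanced countable tree. I locate the defects that prevent $S$ from being balanced. By definition of balanced, they come in two flavors: (i) a terminal node $a$ whose level set $Y_a := (\lev(a), \above)_{\Lev(S)}$ is nonempty, and (ii) an unbounded branch $B$ (a maximal linearly ordered subset with no maximum) such that $Y_B := \Lev(S) \setminus \lev[B]$ is nonempty. Every interior node at which $S$ fails to be balanced is covered either by being dominated by such a terminal $a$ or by lying on such a branch $B$, and since $S$ is countable we can enumerate a countable family $\mathcal{B}$ of branches witnessing all branch-type defects.

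To repair each defect I would invoke a prior construction (Construction~\ref{con:XY_tree}) that produces, for any prescribed upper level set $Y$, a balanced countable tree in $\omega\Tw$ with level set $Y$ and root splitting at each non-terminal by some fixed $m \in M$. Plant such a tree $T_a$ at each terminal $a \in A$ via a terminal-planting $S \TermExt_a T_a$, and plant $T_B$ strictly above each branch $B \in \mathcal{B}$ (all new nodes incomparable to $S\setminus B$ but above $B$); the resulting tree $T := S \TermExt_{a\in A} T_a \TermExt_{B \in \mathcal B} T_B$ is countable, and all the plantings commute since they occur at disjoint sites. I would then verify that every originally unbalanced node of $S$, together with every newly added node, is now balanced in $T$, completing the embedding $S \subseteq T \in \sigma\Tc$.

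The step I expect to require the most care is the branch-planting, specifically arguing that a countable family $\mathcal{B}$ indeed suffices to witness all interior unbalanced nodes, and verifying that planting above a branch $B$ (rather than at a node) fits into the $\LexStrTree$ framework so that $S \subseteq T$ is a genuine strong embedding preserving meet, level, splitting, and the lexicographic order. The terminal plantings $S \TermExt_a T_a$ are covered by the earlier analysis of terminal extensions, but the branch case is slightly outside that setting and is the one place where the argument genuinely needs the countability of $S$.
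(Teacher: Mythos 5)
Your proposal is correct and follows essentially the same route as the paper's proof: reduce cofinality in $\omega\Tc$ to embedding into a balanced countable tree via Lemma~\ref{balanced_locally_finite}, identify the two kinds of balance defects (unbalanced terminal nodes and unbounded branches with nonempty residual level set), and repair them by planting the balanced trees of Construction~\ref{con:XY_tree} with the prescribed level sets. The only nitpick is that the level set planted at a terminal node $a$ should be $[\lev(a), \above)$ rather than $(\lev(a), \above)$, since the root of $T_a$ is identified with $a$ itself.
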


Next, we shall characterize $U_M$, but first consider the following definition.

\begin{df}
	For every countable set of colors $C$ let $\Qyu_C$ denote $(\Qyu, \leq)$ endowed with a \emph{generic $C$-coloring} $\phi\maps \Qyu \to C$, meaning that every monochromatic set $\phi^{-1}(c)$ is $\leq$-dense.
	Such structure $\Qyu_C$ is unique up to isomorphism as a Fraïssé limit of all $C$-colored finite linear orders and color-preserving embeddings.
	
	For every $M \subs \Nat^+$ we put $M^< := \set{(k, m) \in \Nat_0 \times M: k < m}$, so we can use $\Qyu_M$ and $\Qyu_{M^<}$ later.
	For every $s < t \in T \in \ob{\omega\Tc}$ let $\spl(s, t)$ denote the pair $(k, \spl(s)) \in M^<$ such that $t \in C_k$ where $(C_i: i < \spl(s))$ is the $\leq\lex$-increasing enumeration of $\Spl(s)$.
\end{df}

\begin{tw} \label{thm:U_M}
	$U_M$ is the unique (up to isomorphism) $\omega\Tc$-object satisfying
	\begin{enumerate}[label=\rm(T\arabic*), start=0]
		\item $U_M$ is balanced with $\Lev(U_M)$ being isomorphic to $(\Qyu, \leq)$,
		\item for every $\al \in \Lev(U_M)$, every $t' <\lex t''$ in $U_M(\al)$, and $m \in M$ there is $t \in U_M(\al)$ such that $t' <\lex t <\lex t''$ and $\spl(t) = m$,
		\item for every $\beta < \al \in \Lev(U_M)$, finite $F \subs U_M(\al)$, and $\phi\maps F \to M^<$ there is $\al' \in (\beta, \al)$ such that $\spl(t\restr{\al'}, t) = \phi(t)$ for every $t \in F$.
	\end{enumerate}
	It follows that $U_M$ satisfies also
	\begin{enumerate}[resume*]
		\item every branch $B$ with the coloring $\phi\maps t \mapsto \spl(t) \in M$ is isomorphic to $\Qyu_M$,
		\item for every $t \in U_M$ the interval $(\below, t)$ with the coloring $\phi\maps s \mapsto \spl(s, t) \in M^<$ is isomorphic to $\Qyu_{M^<}$,
		\item for every $t \in U_M$, $\al > \lev(t)$, and $C \in \Spl(t)$ the sets $U_M(\al) \cap C$, $U_M(\al) \cap [t, \above)$, and $U_M(\al)$ with the lexicographic order and with the coloring $\phi\maps t \mapsto \spl(t) \in M$ are all isomorphic to $\Qyu_M$ unless $M = \set{1}$,
		\item for every finite $F \subs U_M(\al)$ there is $\beta < \al$ such that $(\beta, \al)$ with the coloring $\phi\maps \gamma \mapsto (\spl(t\restr{\gamma}, t))_{t \in F} \in (M^<)^F$ is isomorphic to $\Qyu_{(M^<)^F}$.
	\end{enumerate}
	
	\begin{proof}
		$U_M$ is characterized by being a Fraïssé limit of $(\Ta, \sigma\Ta)$.
		Since we have the initial object (the empty tree), cofinality of $U_M$ in $(\Ta, \sigma\Ta)$ already follows from injectivity.
		So $U_M$ is characterized by being injective in $(\Ta, \sigma\Ta)$.
		Recall that the injectivity means that for every inclusion $S \subs U_M$ that is a $\sigma\Ta$-arrow from a $\Ta$-object (i.e. a finite decided strong subtree) and every (one-step) $\Ta$-extension $S \subs T$ we find a $\sigma\Ta$-arrow $f\maps T \to U_M$ such that $f\restr{S} = \id{}$.
		Note that without loss of generality we may always suppose $\Lev(S) \subs \Lev(U_M)$.
		
		Let $U$ be an $\omega\Tc$-object.
		First we show that if $U$ is injective in $(\Ta, \sigma\Ta)$, then it satisfies (T0), (T1), (T2).
		$U$ is balanced since for every $s \in U$ and $\al > \lev(s)$ we may consider a finite strong subtree $S \subs U$ such that $s \in S$ and $S \cap U(\al) \ne \emptyset$ (since $U$ is a $\sigma\Ta$-object and so locally finite) with a node $t \in [s, \above)_S$ such that $\lev_S(t)$ is as high as possible below $\al$.
		If $\lev_S(t) = \al$, we are done.
		Otherwise we consider an extension $T := S \TermExt_t B$ for a bush $B$ and the extending map $f\maps T \to U$.
		The image $S' := f[T] \sups S$ contains a node $t' > t$, whose level is strictly closer to $\al$.
		
		$U$ has no maximal level since if $s \in U$ was a node at a maximal level, we could consider an extending map $f\maps \set{s} \TermExt_s B \to U$ for a bush $B$.
		Similarly, $U$ has no minimal level since if $r \in U$ was the root, we could consider an extending map $f\maps \set{r} \NTermExt B^b \to U$ for a pointed bush $B^b$.
		It follows from (T2) (to be proved) that the order of $\Lev(U)$ is dense.
		Together, $\Lev(U)$ is isomorphic to $(\Qyu, \leq)$, and to we have (T0).
		
		We shall prove (T1) and (T2) together.
		Let $\beta < \al$ be levels of $U$, let $F \subs U(\al)$ be finite, and let $\phi\maps F \to M^<$.
		In the case we are proving (T1) we make sure that $t', t'' \in F$ and that $\phi(t') = (0, m')$ for some $m' > 1$ (note that $M \neq \set{1}$ since $t' <\lex t''$).
		Since $U$ is locally finite, there is a finite strong subtree $S \subs U$ such that $F \subs S$ and $S \cap U(\beta) \neq \emptyset$.
		For every $s \in S(\al)$ let $(k_s, m_s)$ be an element of $M^<$ that is equal to $\phi(s)$ if $s \in F$.
		We consider an extending map $f\maps S \NTermExt_\al (B_s^{b_{s, k_s}})_{s \in S(\al)} \to U$ where $B_s$ are bushes with a root $r_s$ and $\leq\lex$-enumerated terminal nodes $(b_{s, i}: i < m_s)$ such that $b_{s, k_s} = s$ and $\dspl(b_{t', 1}) = m$ in the case we are proving (T1).
		Let $\al' \in \Lev(U)$ be the level containing images of the roots $\set{f(r_s): s \in S(\al)}$.
		We have $\beta < \al' < \al$ since $S \cap U(\beta) \neq \emptyset$, and $\spl(s\restr{\al'}, s) = \phi(s)$ for every $s \in F$ since $s\restr{\al'} = f(r_s)$.
		Hence, we have proved (T2).
		In the case we were also proving (T1), we have $b_{t', 0} = t' <\lex t := f(b_{t', 1}) < \lex t''$ and $\spl(t) = \dspl(b_{t', 1}) = m$.
		
		We have proved that the injectivity of $U$ implies (T0), (T1), (T2).
		Next, we shall prove that the conditions (T0), (T1), (T2) imply (T3), (T4), (T5), (T6) for any $U \in \ob{\omega\Tc}$.
		The condition (T3) follows from the fact that $\Lev(U)$ is isomorphic to $(\Qyu, \leq)$ and from (T2).
		(T4) follows similarly.
		To prove (T5) we first show that $U(\al) \cap C$ is isomorphic to $(\Qyu, \leq)$.
		Let $\beta := \lev(t) < \al$ and $t' <\lex t'' \in C$.
		Let also $1 \neq m \in M$.
		By putting $\phi(t') := (1, m)$ and $\phi(t'') := (0, m)$, by applying (T2), and using the fact that $U$ is balanced, we obtain nodes $s', s'' \in C$ with $s' <\lex t' <\lex t'' <\lex s''$.
		By putting $\phi(t') := (0, m)$ instead, we obtain $t' < s''' < t'' \in C$.
		Hence, $U(\al) \cap C$ is isomorphic to $(\Qyu, \leq)$.
		It follows that $U(\al) \cap [t, \above)$ and $U(\al)$ are isomorphic to $(\Qyu, \leq)$ as well since they are covered by the sets of the form $U(\al) \cap C$.
		We conclude (T5) by applying (T1).
		To prove (T6) we consider a level $\beta < \al$ such that every meet of elements from $F$ is strictly below the level $\beta$, so that the map $\gamma \in (\beta, \al) \mapsto (t\restr{\gamma}: t \in F)$ is one-to-one.
		By (T0), the interval $(\beta, \al)$ is isomorphic to $(\Qyu, \leq)$ and by (T2), the coloring $\phi\maps (\beta, \al) \to (M^<)^F$ is generic.
		
		In the last part of the proof we suppose that $U \in \ob{\omega\Tc}$ satisfies (T0), (T1), (T2), and we show that $U$ is injective in $(\Ta, \sigma\Ta)$, and hence it is isomorphic to the Fraïssé limit $U_M$.
		By (T0) $U$ is has no terminal nodes, so $U \in \ob{\omega\Ta}$, and it is balanced, so $U \in \ob{\sigma\Ta}$ by Lemma~\ref{balanced_locally_finite}.
		To prove the injectivity, for every inclusion $S \subs U$ of a finite strong subtree and every one-step $\Ta$-extension $S \subs T$ we find an extending $\sigma\Ta$-arrow $f\maps T \to U$.
		
		Let $T = S \TermExt_s B$ be a terminal one-step extension by a decided bush $B$ with $(b_i: i < m)$ being the $\leq\lex$-increasing enumeration of the top level of $B$.
		We have $m = \dspl_S(s) = \spl_U(s)$ since $S$ is decided.
		Let $\al \in \Lev(U)$ be the $S$-successor level to $\lev(s)$ if $\lev(s) \neq \max(\Lev(S))$.
		Otherwise let $\al$ be an arbitrary $U$-level strictly above $\lev(s)$, which exists since $U$ has no terminal nodes.
		Let $(C_i: i < m)$ be the $\leq\lex$-increasing enumeration of $\Spl_U(s)$.
		By (T5) there is $a_i \in U(\al) \cap C_i$ such that $\spl(a_i) = \dspl(b_i)$ for every $i < m$.
		It is enough to put $f(b_i) := a_i$, $i < m$, to obtain the desired map $f\maps T \to U$.
		
		Next, let $T = S \NTermExt_\al (B_s^s)_{s \in S(\al)}$ be a non-terminal one-step extension where $r_s$ denotes the root of the bush $B_s$, $(b_{s, i}: i < m_s)$ is the $\leq\lex$-increasing enumeration of the top level of $B_s$, and $k_s < m_s$ is the index such that $b_{s, k_s} = s$, for every $s \in S(\al)$.
		Let $\beta \in \Lev(U)$ be the $S$-level immediately preceding $\al$ if it exists, or any $U$-level $\beta < \al$ otherwise.
		By (T2) there is $\al' \in (\beta, \al)$ such that $\spl(s\restr{\al'}, s) = (k_s, m_s)$ for every $s \in S(\al)$.
		Fix $s \in S(\al)$.
		We put $f(r_s) := s\restr{\al'} \in U$ and we denote the $\leq\lex$-increasing enumeration of $\Spl(f(r_s))$ by $(C_{s, i}: i < m_s)$.
		By (T5) for every $i < m_s$ there is a node $a_{s, i} \in U(\al) \cap C_{s, i}$ with $\spl(a_{s, i}) = \dspl(b_{s, i})$.
		Of course, for $i = k_s$ we take $a_{s, i} = s$.
		Putting $f(b_{s, i}) := a_{s, i}$, $i < m_s$, concludes the construction of the desired map $f\maps T \to U$.
		
		Altogether, we have proved that any $\omega\Tc$-object $U$ is injective in $(\Ta, \sigma\Ta)$, which is equivalent to being isomorphic to $U_M$, if and only if it satisfies (T0), (T1), (T2), in which case it satisfies also (T3), (T4), (T5), (T6).
	\end{proof}
\end{tw}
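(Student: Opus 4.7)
My plan is to exploit the characterization of $U_M$ as a Fraïssé limit in $(\Ta, \sigma\Ta)$. Since the empty tree is an initial object of $\Ta$, cofinality of $U_M$ in $(\Ta, \sigma\Ta)$ is an automatic consequence of injectivity, so the task reduces to showing that an $\omega\Tc$-object $U$ is injective in $(\Ta, \sigma\Ta)$ if and only if it satisfies (T0), (T1), (T2). The uniqueness then follows from Remark~\ref{rm:Flim}. I would structure the argument as three implications: injectivity $\Rightarrow$ (T0), (T1), (T2); (T0), (T1), (T2) $\Rightarrow$ (T3)--(T6); and (T0), (T1), (T2) $\Rightarrow$ injectivity.

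For the first implication, to get (T0) I would produce arbitrarily high nodes above a given $s \in U$ by taking a finite strong subtree $S$ of $U$ containing $s$, extending $S$ by a bush planted on top, and applying injectivity to push the extension into $U$; iterating raises levels of nodes above $s$ arbitrarily close to any prescribed level. The absence of maximal and minimal levels follows by applying injectivity to one-point trees extended by a terminal bush or by a non-terminal pointed bush respectively. Density of $\Lev(U)$ is a special case of (T2). For (T1) and (T2) simultaneously, I would fix a finite $F \subs U(\al)$, a level $\beta < \al$, and a coloring $\phi$, then take a finite strong subtree $S$ containing $F$ and witnessing a node on level $\beta$, and apply injectivity to the one-step non-terminal extension $S \NTermExt_\al (B_s^{b_{s, k_s}})_{s \in S(\al)}$ where each bush $B_s$ has size $m_s$ and $b_{s, k_s} = s$ with $(k_s, m_s) = \phi(s)$ for $s \in F$; in the (T1) case I simultaneously demand the desired splitting degree on a freshly inserted top-level node of the bush above $t'$.

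For the second implication, (T3) combines (T0) and (T2) by noting that the coloring $t \mapsto \spl(t)$ on a branch realizes every value densely via the genericity from (T2). (T4) is analogous along initial segments. For (T5), first use (T1) to show that $U(\al) \cap C$ is dense (unbounded and interspersed) with respect to $\leq\lex$, hence is isomorphic to $(\Qyu, \leq)$, and then use (T1) again to make the $\spl$-coloring generic. (T6) reduces to (T2) once $\beta$ is chosen small enough so that the map $\gamma \mapsto (t\restr\gamma)_{t \in F}$ on $(\beta,\al)$ is injective.

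The most delicate part, and the main obstacle, is the third implication, where (T0), (T1), (T2) must be shown to force injectivity. First, (T0) gives that $U$ has no terminal nodes (so $U \in \ob{\omega\Ta}$) and is balanced, so $U \in \ob{\sigma\Ta}$ by Lemma~\ref{balanced_locally_finite}. It then suffices, by Corollary~\ref{thm:extension}, to solve one-step extension problems. For a terminal one-step extension $S \TermExt_s B$ at a node $s$ of splitting degree $m$, I would fix the successor level $\al \in \Lev(U)$ of $s$ (or any level above, if $\lev(s)$ is maximal in $\Lev(S)$), enumerate $\Spl_U(s)$ as $(C_i : i < m)$ and use (T5) to find nodes $a_i \in U(\al) \cap C_i$ of the prescribed splitting degree. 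For a non-terminal one-step extension $S \NTermExt_\al (B_s^s)_{s \in S(\al)}$, I would use (T2) to produce a fresh intermediate level $\al'$ realizing the required pattern $\spl(s\restr{\al'}, s) = (k_s, m_s)$ on $S(\al)$, then apply (T5) on top of each $s\restr{\al'}$ to place the remaining top-level bush nodes with their decided splitting. The bookkeeping here --- aligning the lexicographic order, splitting indices, and decided degrees simultaneously across all nodes --- is where the care is needed, but (T5) and (T2) provide exactly the density required to make independent choices.
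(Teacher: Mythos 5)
Your proposal is correct and follows essentially the same route as the paper: reduce uniqueness to injectivity in $(\Ta, \sigma\Ta)$, prove injectivity $\Rightarrow$ (T0)--(T2) via finite strong subtrees and one-step bush extensions, derive (T3)--(T6), and recover injectivity from (T0)--(T2) by solving one-step terminal and non-terminal extension problems with (T5) and (T2). The only sketch-level imprecision is in (T5), where the lexicographic unboundedness and density of $U(\al) \cap C$ actually requires (T2) (to place nodes $s' <\lex t'$ and $t'' <\lex s''$ inside $C$ by prescribing splitting data at an intermediate level), not just (T1), which only interpolates between two given nodes.
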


We have proved that the generic tree $U_M$ exists and characterized it up to isomorphism.
Still, it may be useful to give a concrete description of $U_M$.

\begin{con} \label{con:XY_tree}
	For countable linearly ordered sets $S, Y$ and a distinguished point $0 \in S$ we describe a countable balanced strong lexicographic tree $V_{S, Y}$ with the set of levels $Y$ and every $\Spl(t)$ with the lexicographic order isomorphic to $S$.
	
	Let $X$ be the countable family of all total maps $x\maps Y \to S$ with finite support $\supp(x) := \set{y \in Y: x(y) \neq 0}$.
	We endow $X$ with the lexicographic order, i.e. $x \leq x'$ if $x = x'$ or $x(y) < x'(y)$ where $y = \min\set{y' \in Y: x(y') \neq x'(y')}$ (note the minimum exists as the supports are finite).
	We put $V_{S, Y} := \set{x\restr{(\below, y)}: x \in X$ and $y \in Y}$, i.e. the family of all partial maps $t\maps (\below, y) \to S$, $y \in Y$, with finite support.
	The idea is that $0$ corresponds to the canonical splitting direction and that a node is a code for the splitting path from the “trunk” to the node itself.
	The tree order $\loe$ is the extension of maps, i.e. $\subseteq$ when maps are viewed as sets,
	and it admits meets since $\dom(\bigcup\set{v \in V_{S, Y}: v \subs t \cap t'})$ is of the form $(\below, y)$ because of the finite supports.
	
	For every $t \in V_{S, V}$ let $x(t) \in X$ denote the zero-extension of $t$ to a map $Y \to S$, and let $y(t) := \sup(\dom(t)) \in Y$.
	The map $x \times y\maps V_{S, V} \to X \times Y$ is one-to-one since $t = x(t)\restr{(\below, y(t))}$.
	For every incomparable $t', t''$ we put $t' <\lex t''$ if $x(t') < x(t'')$.
	This defines the lexicographic order on $V_{S, V}$.
	Clearly, for every $t \in V_{S, V}$ we have that $\Spl(t)$ with the lexicographic order is isomorphic to $S$.
	The linearly ordered set $Y$ serves as the level set: we put $t \eqlv t'$ if $y(t) = y(t')$, so $\lev(t) = y(t)$ for every $t \in V_{S, V}$.
	Clearly, the tree $V_{S, V}$ is balanced with respect to this level structure.
	
	Note that $V_{1, Y}$ is isomorphic to $Y$ for every linearly ordered set $Y$.
	Also $V_{2, \nat}$ is the full binary tree $2^{<\nat}$.
	
	Every two embeddings of linear orders $f\maps S \to S'$ and $g\maps Y \to Y'$ induce an embedding $e_{f, g}\maps V_{S, Y} \to V_{S', Y'}$ preserving meets, levels, and the lexicographic order, where every node $t\maps (\below, y)_Y \to S$ is mapped to the zero-extended lifting $t'\maps (\below, g(y))_{Y'} \to S'$, i.e. $t' \cmp g = f \cmp t$ and $t'(y') = 0$ for every $y'$ not in the range of $g$.
	Note that if $f, g$ are inclusions, then $t'$ just extends $t$ from $(\below, y)_Y$ to $(\below, y)_{Y'}$ by zeros.
	If $f\maps S \to S'$ is an isomorphism, then $e_{f, g}$ also preserves splitting.
	This construction yields a functor $\LO^2 \to \LexLevTree$ (where $\LO$ denotes the category of all linear orders and embeddings), and for every fixed $S$ we have a functor $\LO \to \LexStrTree$.
	
	Let $C_S$ denote the set of colors $\set{c \subs S: 0 \in c}$, i.e. colors are arbitrary subsets of $S$ containing $0$ (though we will be considering mostly finite subsets later).
	A coloring $\phi\maps V_{S, Y} \to C_S$ induces a pruning $V_{S, Y, \phi} \subs V_{S, Y}$.
	We put $t \in V_{S, Y, \phi}$ if and only if $t(y) \in \phi(t\restr{y})$ for every $y \in \dom(t)$ (equivalently for every $y \in \supp(t)$ since $0 \in c$ for every color $c$).
	In fact, for every $t \in V_{S, Y}$ with $\supp(t) \neq \emptyset$ we put $y'(t) := \max(\supp(t))$ and consider the canonical predecessor $p(t) := t\restr{y'(t)}$.
	We have $t \in V_{S, Y, \phi}$ if and only if $p(t) \in V_{S, Y, \phi}$ and $y'(t) \in \phi(p(t))$ or $\supp(t) = \emptyset$.
	The subtree $V_{S, Y, \phi}$ is balanced since every node can be extended by zeros.
	Also, we have $\spl(t) = \card{\phi(t)}$ for every node $t$.
	The coloring $\phi$ may be given also as $\phi(t) = \phi'(x(t), y(t))$ for a coloring $\phi'\maps X \times Y \to C_S$.
\end{con}

\begin{con}[A concrete model of the generic tree]
	The generic tree $U_M$ may be represented as $T_\phi := V_{\Qyu, \Qyu, \phi}$ for a suitable coloring $\phi\maps T \to C_M$ where $T := V_{\Qyu, \Qyu}$ and $C_M := \set{c \subs \Qyu: 0 \in c$ and $\card{c} \in M}$ is the set of colors corresponding to the allowed splitting degrees.
	Clearly, every tree $T_\phi$ is an object of $\sigma\Ta$ satisfying (T0).
	If $M = \set{1}$, then there is only the constant coloring $\phi$ taking the value $c_0 := \set{0}$, which works since the conditions (T1) and (T2) are trivial in this case.
	Otherwise, we fix $c_1 \in C_M$ with $1 \in c_1$.
	
	Let $A := \set{(t', t'', m) \in T \times T \times M: t' \eqlv t''$ and $t' <\lex t''}$ be the countable set of all situations considered in (T1).
	For every $a = (t', t'', m) \in A$ we choose nodes $u_a, v_a \in T$ such that $t' \meet t'' < u_a < t'$ and $\lev(u_a) > \max(\supp(t'))$.
	Moreover, we let $v_a$ be the extension of $u_a$ defined by $v_a(\lev(u_a)) := 1$ and $v_a(\al) := 0$ for $\al \in (\lev(u_a), \lev(t'))$, so $t' \eqlv v_a \eqlv t''$ and $t' <\lex v_a <\lex t''$, and also $u_a$ is the canonical predecessor $p(v_a)$.
	Since the set $A$ is countable and every interval $(t\restr{\al}, t)$ for $\al < \lev(t)$ is infinite, we may choose the points $u_a$ so that the map $a \mapsto \lev(u_a)$ is one-to-one and its range is disjoint from an order-dense subset $D \subs \Qyu$.
	It follows that the map $a \mapsto v_a$ is also one-to-one since $p(v_a) = u_a$.
	Moreover, we may assure that $\set{u_a: a \in A} \cap \set{v_a: a \in A} = \emptyset$.
	Now it is correct to define $\phi_1(u_a) := c_1$ and $\card{\phi_1(v_a)} := m$ for every $a = (t', t'', m) \in A$, and $\phi_1(t) := c_0$ otherwise.
	The tree $T_{\phi_1}$ satisfies (T1) since for every $a = (t', t'', m) \in A$, if $t' \in T_{\phi_1}$, then $u_a \in T_{\phi_1}$, and so $v_a \in T_{\phi_1}$.
	However, $T_{\phi_1}$ does not satisfy (T2): let $a = (t', t'', m) \in A$ be such that $t' \in T_{\phi_1}$, and let $b := (t', v_a, m)$.
	We have $u_b > u_a$.
	There is no $\al \in (\lev(u_b), \lev(t'))$ such that $\spl(v_a\restr{\al}) \neq 1 \neq \spl(v_b\restr{\al})$.
	This is because if $\phi_1(t) \neq c_0$ for some $t \in (u_a, v_a)$, then $t = u_c$ or $t = v_c$ for some $c \in A$, and the latter is not possible since we would have $u_a = p(v_a) = p(v_c) = u_c$.
	The same is true for $(u_b, v_b)$, but every level can contain at most one node of the form $u_c$.
	
	To assure the condition (T2) we define $\phi_2(t) := \psi(\lev(t))(x(t))$ for every $t \in T$ with $\lev(t) \in D$ where $x(t) \in X$ is the zero-extension of $t$ defined in Construction~\ref{con:XY_tree} and $\psi\maps D \to H_M$ is a coloring defined as follows.
	$H_M$ is the countable set of ``hypercolors'' $\set{h \in (C_M)^X: \supp(h) := \set{x \in X: h(x) \neq c_0}$ is finite$}$.
	Note that $D$ is an isomorphic copy of $\Qyu$.
	We let $\psi$ be a generic $H_M$-coloring of $D$.
	Then for every $\beta < \al \in \Lev(T_{\phi_2})$, every finite $F \subs T_{\phi_2}(\al)$, and $\set{(k_t, m_t): t \in F} \subs M^<$ we consider the hypercolor $h \in H_M$ defined by $h(x(t)) := c_t := \set{-k_t, \cdots, m_t - k_t - 1}$ (so that $c_t$ contains $m_t$ numbers and $0$ is at the position $k_t$) for every $t \in F$, and $h(x) := c_0$ otherwise.
	There is $\beta' \in (\beta, \al)$ such that $\max(\supp(t)) > \beta'$ for every $t \in F$, and there is $\al' \in (\beta', \al) \cap D$ such that $\psi(\al') = h$, and so $\phi_2(t\restr{\al'}) = c_t$ and $\spl_{T_{\phi_2}}(t\restr{\al'}, t) = (k_t, m_t)$ for every $t \in F$.
	This shows (T2).
	On the other hand, if we define $\phi_2(t) = c_0$ for every $t$ with $\lev(t) \notin D$, we have levels $\al$ such that $\spl(t) = 1$ for every $t \in T_{\phi_2}(\al)$, so (T1) does not hold.
	
	To assure both (T1) and (T2) we combine the colorings $\phi_1$ and $\phi_2$.
	Observe that for a coloring $\phi\maps T \to C_M$ to assure (T2) it is enough that $\phi(t) = \phi_2(t)$ for every $t \in H_{\al}$ and $\al \in D$ for some finite sets $H_{\al} \subs T(\al)$.
	This is because we can choose different witnessing levels $\al' \in (\beta, \al)$ for different (T2) situations $(\al, \beta, F, \set{(k_t, m_t): t \in F})$ and put $H_{\al'} := \set{t\restr{\al'}: t \in F}$.
	We also put $H_{\al} := \emptyset$ for every $\al \in \Qyu \setminus D$.
	So, $\phi$ defined as $\phi(t) := \phi_2(t)$ if $t \in H_{\lev(t)}$, and $\phi(t) := \phi_1(t)$ otherwise, assures (T2).
	It also assures (T1) since for $(t', t'', m) \in A$ there is $t \in (t', t'']_{\leq\lex}$ with $t \eqlv t'$ such that $(t', t)_{\leq\lex} \cap H_{\lev(t')} = \emptyset$, and so $\phi(v_{a'}) = \phi_1(v_{a'})$ for $a' := (t', t, m) \in A$.
	Clearly also $\phi(u_a) = \phi_1(u_a)$ since $H_{\lev(u_a)} = \emptyset$.
	This concludes the proof that $T_\phi$ is isomorphic to $U_M$ by Theorem~\ref{thm:U_M}.
\end{con}

\subsubsection{The weak Ramsey property and dendrites}

We work with the trio of categories of finite trees $\Tw$, $\Tc$, $\Ta$ for a fixed nonempty set $M \subs \Nat^+$ of allowed splitting degrees.
We have shown that $\Tw, \Ta \subs \Tc$ are full cofinal subcategories, that $\Tw$ is a weak Fraïssé category, $\Ta$ is a Fraïssé category, and we have described the common (weak) Fraïssé limit $\U$.
In this situation, $\Tw$ has the weak Ramsey property if and only if $\Ta$ has the Ramsey property if and only if $\aut(\U)$ is extremely amenable.
We were unable to show that the equivalent properties hold.
In this section we give partial results in this direction and pose the general case as an open question.

The special case $M = \set{m}$, in which $\Tw$ has the amalgamation property since there is only one option for the splitting degree (Theorem~\ref{thm:tree_amalgamation}~(i)), is covered by the finite version of Milliken's theorem~\cite[Corollary~1.5]{MilTrees}.

\begin{tw}[Milliken]
	For every positive integers $m, a, b, k$ there exists a positive integer $N = N(m, a, b, k)$ with the following property.
	If $T$ is a finite balanced tree of height $N$ and splitting degrees $\leq m$, and all its balanced strong subtrees of height $a$ are colored by (partitioned into) $k$ colors, then there exists a balanced strong subtree $S \subs T$ of height $b$ such that all balanced strong subtrees of $S$ of height $a$ have the same color.
\end{tw}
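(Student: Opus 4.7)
The plan is to derive this finite Milliken theorem from its infinite counterpart via a standard compactness argument, with the infinite version proved by induction on $a$ using the Halpern--L\"auchli theorem.

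First I would reduce to the uniform case. A finite balanced tree of height $N$ with splitting degrees bounded by $m$ embeds as a strong subtree of the full uniformly $m$-splitting balanced tree of the same height by, at each non-terminal node, mapping its immediate successors into a prefix of the $m$ available branchings; balanced strong subtrees and colorings transport along this embedding, so without loss of generality one may take $T$ to be the full uniformly $m$-splitting balanced tree of height $N$.

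For the infinite version, let $T^{\infty}$ denote the full uniformly $m$-splitting infinite balanced tree, and let $\chi$ be a $k$-coloring of its balanced strong subtrees of height $a$. I would proceed by induction on $a$: a Halpern--L\"auchli-style fusion yields an infinite balanced strong subtree $T' \subseteq T^{\infty}$ on which the color of any height-$a$ balanced strong subtree depends only on its bottom $(a-1)$ levels; $\chi$ then descends to a $k$-coloring of height-$(a-1)$ balanced strong subtrees of $T'$, and applying the inductive hypothesis to $T'$ produces an infinite balanced strong subtree on which $\chi$ is constant. The base case $a = 1$ is essentially the statement of Halpern--L\"auchli itself, which guarantees that a single color captures a full strongly embedded infinite subtree of $T^{\infty}$.

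The finite version then follows by compactness. If no such $N$ existed, then for each $N$ one could pick a $k$-coloring $\chi_N$ of height-$a$ balanced strong subtrees of the full height-$N$ tree witnessing the failure, and by a K\"onig's lemma argument (or an ultralimit along a nonprincipal ultrafilter on $\Nat$, noting that the set of possible restrictions of a coloring to each finite initial segment is finite) extract a limit coloring $\chi_{\infty}$ of $T^{\infty}$ under which no height-$b$ balanced strong subtree is monochromatic, contradicting the infinite version applied with parameters $m, a, b, k$.

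The main obstacle is the Halpern--L\"auchli theorem itself, whose proof is the technical heart of this circle of results and requires a delicate level-by-level fusion (or, alternatively, a forcing-with-side-conditions or a Galvin-style argument); the induction on $a$ and the compactness reduction are both routine once Halpern--L\"auchli is in hand.
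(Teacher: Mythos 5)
First, a point of orientation: the paper does not prove this theorem at all — it is quoted verbatim as a known result and attributed to Milliken \cite[Corollary~1.5]{MilTrees}. So the relevant comparison is with the classical proof, and your skeleton (Halpern--L\"auchli, induction on the height $a$ of the colored subtrees, then compactness) is exactly the classical route; in that sense your plan is the ``right'' one and matches the cited source.

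There is, however, a genuine gap in your first step, the reduction to the uniform case. In this setting a \emph{strong} embedding must preserve splitting: the induced map $\Spl_T(t) \to \Spl_S(f(t))$ is required to be a bijection, not merely an injection. Consequently a balanced tree containing a node of splitting degree strictly less than $m$ is \emph{not} a strong subtree of the full uniformly $m$-splitting tree, and, more to the point, a balanced strong subtree of a non-uniform $T$ is not a balanced strong subtree of the ambient full $m$-ary tree into which you have embedded $T$. So neither the family of colored objects nor the coloring ``transports along this embedding,'' and the reduction collapses. Two remarks on how to repair this. (1) For the only use the paper makes of the theorem (the corollary immediately following, where $M=\set{m}$ and every tree in sight is uniformly $m$-splitting), the uniform case is all that is needed, and for it the rest of your argument — Halpern--L\"auchli in its strong-subtree form as the base case $a=1$, the fusion step making the color of a height-$a$ strong subtree depend only on its bottom $a-1$ levels, and K\"onig's lemma for the finite version — is the standard proof. (2) For the general statement as quoted, one does not reduce to the uniform case at all: Milliken's argument runs the Halpern--L\"auchli induction directly on an arbitrary finitely branching tree of height $\omega$ (the matrix form of Halpern--L\"auchli, applied simultaneously to the finitely many trees sitting above the top nodes of a partial subtree, is what makes the fusion work), and the compactness step must then be carried out over pairs consisting of a finite tree together with a coloring, using that there are only finitely many balanced trees of each height with splitting degrees $\leq m$. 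With either repair the proposal is sound; I would only add that the inductive fusion step is less ``routine'' than you suggest, though it is standard and well documented.
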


\begin{wn}
	For $m \in \Nat^+$ and $M = \set{m}$, the category $\Tw$ has the Ramsey property. 
\end{wn}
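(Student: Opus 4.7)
The plan is to derive the Ramsey property of $\Tw$ directly from the finite Milliken theorem stated just above, transferring the problem from $\Tw$-morphisms into a large balanced target to Milliken-style colorings of balanced strong subtrees. For $M = \set{m}$ the balanced objects of $\Tw$ are exactly the full $m$-ary trees, which I write $F_m^h$ (height $h$), and the key observation is that each such $F_m^h$ is rigid in $\LexStrTree$: any level- and lex-preserving automorphism of $F_m^h$ must be the identity. Consequently, for any balanced $v \in \ob{\Tw}$, the ``image'' map gives a bijection between $\Tw(F_m^{h_a}, v)$ and the set of balanced strong subtrees of $v$ of height $h_a$.

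\textbf{Two lemmas.} First I would prove a \emph{balancing} lemma: for every $a \in \ob{\Tw}$ there is a canonical strong embedding $\iota_a \maps a \hookrightarrow \bar a := F_m^{h(a)}$, obtained by sending the lex-ordered splitting classes of each non-terminal of $a$ bijectively to those of its image and filling in the missing branches. Second, an \emph{extension} lemma: for any balanced $v \in \ob{\Tw}$ with $h(v) \geq h(a)$ and any $g \in \Tw(a, v)$ there exists $\bar g \in \Tw(\bar a, v)$ with $\bar g \cmp \iota_a = g$, built level by level so that at each new node of $\bar a$ the balance of $v$ provides a representative in the required splitting class at the required level.

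\textbf{Main argument.} Given $a, b \in \ob{\Tw}$ and $k \in \nat$ (vacuous if $\Tw(a, b) = \emptyset$, so one may assume $h(a) \leq h(b)$), enumerate $\Tw(a, \bar a) = \set{\iota_1, \ldots, \iota_L}$ with $\iota_1 := \iota_a$ and apply Milliken's theorem with parameters $m, h(\bar a), h(\bar b), k^L$ to get $N$; set $v := F_m^N$. For $\phi \maps \Tw(a, v) \to k$, define a coloring $\bar\phi$ on balanced strong subtrees $S \subs v$ of height $h(\bar a)$ by $\bar\phi(S) := (\phi(\sigma_S \cmp \iota_j))_{j=1}^L$, where $\sigma_S \maps \bar a \to S$ is the unique isomorphism supplied by rigidity. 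Milliken yields a balanced strong $\bar S \subs v$ of height $h(\bar b)$ on which $\bar\phi$ is constantly $(c_1, \ldots, c_L)$; let $\tau \maps \bar b \to \bar S$ be its unique isomorphism and put $e := \tau \cmp \iota_b \maps b \to v$. For any $f \in \Tw(a, b)$ the extension lemma applied to $\iota_b \cmp f \maps a \to \bar b$ yields $\bar g \maps \bar a \to \bar b$ with $\bar g \cmp \iota_a = \iota_b \cmp f$; then $\tau \cmp \bar g$ has balanced strong image $S \subs \bar S$ of height $h(\bar a)$, and by rigidity $\tau \cmp \bar g = \sigma_S$. Hence $e \cmp f = \tau \cmp \iota_b \cmp f = \tau \cmp \bar g \cmp \iota_a = \sigma_S \cmp \iota_1$, so $\phi(e \cmp f) = \bar\phi(S)_1 = c_1$, independent of $f$. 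Thus $\phi$ is constant on $e \cmp \Tw(a, b)$, which is the Ramsey property.

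\textbf{Main obstacle.} The only genuinely technical step is the extension lemma: one must verify that the level-by-level construction produces a strong embedding (preserving meets, levels, splitting, and lex order), and not merely a weak one. Balance of the target is what guarantees representatives exist at the required level, while the lex structure canonically matches splitting classes and keeps the construction coherent. Once this is secured, the rest is bookkeeping around the product coloring $\bar\phi$. The degenerate case $m = 1$ is subsumed: there $\Tw$ is simply the category of finite linear orders with increasing embeddings, and the Ramsey property is the classical finite Ramsey theorem (equivalently, Milliken with $m = 1$).
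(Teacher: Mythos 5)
Your proof is correct, and its core is the same as the paper's: for $M=\set{m}$ the balanced objects are the full $m$-ary trees $T_n$, these are rigid in $\LexStrTree$ (so embeddings $T_a\to T_b$ correspond bijectively to balanced strong subtrees of height $a$), and the finite Milliken theorem then gives that each $T_n$ is a Ramsey object. Where you diverge is in passing from the balanced case to arbitrary objects of $\Tw$. The paper gets this step for free from the general machinery it has already set up: $\set{T_n}_{n}$ is a full cofinal subcategory, so being a Ramsey object transfers by the proposition preceding Corollary~\ref{equiRamsey}, and then Lemmas~\ref{LMcomposition} and~\ref{LMhereditary} (together with the amalgamation property, which holds since $\card{M}=1$) push the Ramsey property from the $T_n$ down to every object. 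You instead carry out this reduction by hand, via the balancing embedding $\iota_a\maps a\to T_{h(a)}$, the extension lemma for maps into balanced targets, and a product coloring on balanced strong subtrees; this is self-contained and makes the combinatorics explicit, at the cost of re-proving in concrete form what the abstract lemmas already give. Two small remarks: your product coloring over all of $\Tw(a,\bar a)$ is redundant, since your own computation shows every $e\cmp f$ factors as $\sigma_S\cmp\iota_a$ for the single canonical $\iota_a=\iota_1$, so a plain $k$-coloring $S\mapsto\phi(\sigma_S\cmp\iota_a)$ suffices; and the extension lemma you flag as the main obstacle does go through as you describe — the induced maps on $m$-element splitting sets are forced to be the unique lex-order isomorphisms, and balance of the target supplies representatives at the prescribed levels, so meets, levels, splitting and the lexicographic order are all preserved.
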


\begin{proof}
	For every $n \in \Nat$ let $T_n$ denote the (unique up to isomorphism) lexicographically ordered balanced $m$-splitting tree of height $n$.
	The full subcategory $\set{T_n: n \in \Nat^+} \subs \Tw$ is cofinal and so it suffices to show that for any $a$, the tree $T_a$ is Ramsey.
	Note that for $a, b \in \Nat$, balanced strong subtrees $S \subs T_b$ of height $a$ are in one-to-one correspondence with $\Tw(T_a, T_b)$ by taking an embedding to its image.
	Fix $a, b, k \in \Nat^+$ and set $N = N(m, a, b, k)$.
	Take a coloring $\phi\maps \Tw(T_a, T_N) \to k$.
	By the Milliken's theorem there is a balanced strong subtree $S \subs T_N $ of height $b$ and the corresponding $\Tw$-embedding $f\maps T_b \to T_N$ such that $\phi$ is constant on $f \circ \Tw(a, b)$.
\end{proof}

The generic tree $\U$ shares a lot of connections with the generalized Ważewski dendrite $\WD{M + 1}$.
Known related results by Duchesne~\cite{Duchesne} and Kwiatkowska~\cite{Kwiatkowska} allow us to obtain the desired Ramsey property.
However we need to modify our categories.
Namely, we need to forget the level structure.

\begin{df}
	For $M \subs \Nat^+$ we consider the trio $\LTw$, $\LTc$, $\LTa$ of \emph{leveless} variants of the categories $\Tw$, $\Tc$, $\Ta$, i.e. the levels are not part of the structure and they are not preserved by embeddings.
	In particular, $\LTw$ is the full subcategory of $\LexSplTree$ (as opposed to $\LexStrTree$) consisting of finite trees with splitting degrees in $M$; $\LTc$ additionally allows and $\LTa$ requires deciding the splitting degrees at terminal nodes (formally via unary relations $R_m$ as in the definition of $\Lab\FSplTree$).
	
	Let $F\maps \Tc \to \LTc$ be the functor forgetting the level structure.
	Since every tree in $\LTc$ is finite, it admits a unique level structure, so the functor $F$ is not only faithful, but also bijective on objects.
	Hence, we may view $\Tc$ as a subcategory of $\LTc$ with the same objects, but fewer morphisms.
	This way we have $\Tw = \Tc \cap \LTw$ and $\Ta = \Tc \cap \LTa$ as in Figure~\ref{fig:leveless}, and $\LTw, \LTa \subs \LTc$ are full cofinal subcategories as with the original trio.
	
	We will show in Proposition~\ref{thm:level_domination} that the subcategory $\Tc \subs \LTc$ (and so also $\Tw \subs \LTw$ and $\Ta \subs \LTa$) is dominating, i.e. for an embedding $f \in \LTc$ there is $g \in \LTc$ such that $g \cmp f \in \Tc$.
	It follows that every amalgamable $\Tc$-arrow is also amalgamable in $\LTc$ (Lemma~\ref{thm:amalgamable_arrow}), so $\LTw$ is a weak Fraïssé category and $\LTa$ is a Fraïssé category.
	Moreover, a (weak) Fraïssé sequence in $\Tw$ or $\Ta$ is a (weak) Fraïssé sequence in $\LTw$ or $\LTa$ (Lemma~\ref{thm:fseq_in_dominating}), and so the common (weak) Fraïssé limit of $\LTw$ and $\LTa$ is the generic tree $\U$ with its level structure removed.
	We denote it by $\LU$.
\end{df}

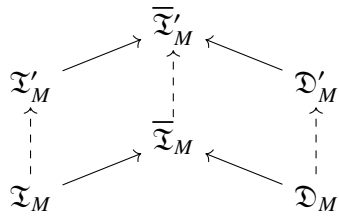
\begin{figure}[ht]
	\centering
	
	\begin{tikzpicture}[
			text diagram,
			x = {(10ex, 0)},
			y = {(0, -8ex)},
		]
		\node (tc') at (0, 0.5) {$\LTc$};
		\node (tw') at (-1, 1) {$\LTw$};
		\node (tc) at (0, 1.5) {$\Tc$};
		\node (ta') at (1, 1) {$\LTa$};
		\node (tw) at (-1, 2) {$\Tw$};
		\node (ta) at (1, 2) {$\Ta$};
		
		\graph{
			{(tw'), (ta')} -> (tc'),
			{(tw), (ta)} -> (tc),
			(tw) ->[dashed] (tw'),
			(tc) ->[dashed] (tc'),
			(ta) ->[dashed] (ta'),
		};
	\end{tikzpicture}
	
	\caption{Inclusions between the trios of categories. Diagonal inclusions are full cofinal, vertical inclusions are wide dominating.}
	\label{fig:leveless}
\end{figure}

\begin{remark}
	Following \cite[Definition~5.1]{KPT} we may call a faithful functor $F\maps \fC \to \fC'$ (e.g. representing the language reduct between classes of structures) \emph{reasonable} if for every $\fC$-object $A$ and every $\fC'$-arrow $f'\maps F(A) \to B'$ there is a $\fC$-arrow $f\maps A \to B$ with $F(f) = f'$.
	This may be viewed as a strong form of the absorption property (D1).
	However, in the case $F$ is the inclusion of a wide subcategory (as above), $F$ being reasonable would already imply $\fC = \fC'$.
	In particular, our $F\maps \Tc \to \LTc$ is not reasonable.
\end{remark}

\begin{prop} \label{thm:level_domination}
	The subcategory $\Tc$ is dominating in $\LTc$.
\end{prop}

\begin{proof}
	Let $S$ be a $\LTc$-subtree of $T$.
	The inclusion $S \subs T$ may not preserve levels.
	We find an $\LTc$-extension $T \subs \hat{T}$ such that the inclusion $S \subs \hat{T}$ will preserve levels, as in Figure~\ref{fig:level_domination}.
	Namely, for every non-terminal node $s \in S$ and an $S$-immediate successor $s' > s$ we let $h_{s, s'}$ denote the cardinality of $(s, s')_T$, we put $h_s := \max\set{h_{s, s'}: s'$ an immediate successor of $s$ in $S}$, and we form $\hat{T} \sups T$ by adding a chain of $h_s - h_{s, s'}$ new nodes between $s'$ and its predecessor in $T$ for every $s \in S$ and an $S$-immediate successor $s' > s$.
	Also, for every newly added node $t$ we decide its splitting degree $m \in M$ and add $m - 1$ new immediate successors of $t$.
\end{proof}
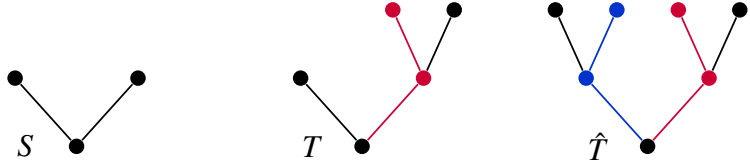
\begin{figure}[htp]
	\centering

	\begin{tikzpicture}[
			trees,
			level 1/.style = {sibling distance = 9\ru},
			level 2/.style = {sibling distance = 4.5\ru},
		]
		
		\path[dot nodes] node (S) {}
			child {node {}}
			child {node {}}
		;
		
		\path[dot nodes] node (T) at (20ex, 0) {}
			child {node {}}
			child[red] {node[left triangle node] {}
				child {node[left triangle node] {}}
				child[black] {node {}}
			}
		;
		
		\path[dot nodes] node (T') at (40ex, 0) {}
			child[blue] {node[right triangle node] {}
				child[black] {node {}}
				child {node[right triangle node] {}}
			}
			child[red] {node[left triangle node] {}
				child {node[left triangle node] {}}
				child[black] {node {}}
			}
		;
		
		\node[left=2\ru] at (S) {$S$};
		\node[left=2\ru] at (T) {$T$};
		\node[left=2\ru] at (T') {$\hat{T}$};
	\end{tikzpicture}
	
	\caption{Domination by level-preserving embeddings.}
	\label{fig:level_domination}
\end{figure}

We can obtain a characterization of the generic leveless tree $\LU$ similar to Theorem~\ref{thm:U_M}.
But first we need the following observation.

\begin{observation}[Leveless extensions]
	We reuse our results and notation from Section~\ref{sec:extensions} to quickly describe how the extensions in the leveless categories $\LTw$, $\LTc$, and $\LTa$ look like.
	The definition of a terminal and non-terminal extension makes sense in the leveless context.
	As in Proposition~\ref{thm:extension_decomposition} every extension can be canonically decomposed into a non-terminal extension followed by a terminal extension, and as in Proposition~\ref{thm:terminal_extension} every terminal extension is of the form $S \TermExt_{s \in A} T_a$ for a set $A$ of terminal nodes.
	Also, every terminal extension can be decomposed into one-step terminal extensions, which are of the form $S \TermExt_s B$ for a bush $B$.
	The situation with non-terminal extensions is different – in the leveless context we are allowed to perform tree surgery at individual nodes rather than whole levels.
	Similarly to Proposition~\ref{thm:nonterminal_extension} it can be shown that every non-terminal extension is canonically of the form $T = S \NTermExt_{s \in A} C_s^{c_s}$ where $A \subs S$ and every $C_s^{c_s}$ is a pointed bush-column, and so $T$ can be decomposed into one-step non-terminal extensions of the form $S \NTermExt_s B^b$ for a pointed bush $B^b$.
\end{observation}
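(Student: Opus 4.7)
The plan is to adapt, with essentially cosmetic changes, the proofs of Propositions~\ref{thm:terminal_extension}, \ref{thm:extension_decomposition}, and \ref{thm:nonterminal_extension} to the leveless setting. First I would note that the definitions of terminal and non-terminal extensions make sense verbatim without reference to levels, so the basic vocabulary transfers. For terminal extensions the proof of Proposition~\ref{thm:terminal_extension} goes through unchanged: given a terminal $S \subs T$, for $t \in T \setminus S$ the node $s(t) := \max((\below, t]_T \cap S)$ is a terminal node of $S$ by terminality, the branches $T_a := [a, \above)_T$ for $a \in A := \setof{s(t)}{t \in T \setminus S}$ satisfy $T_a \cap S = \sn{a}$, and $T = S \TermExt_{a \in A} T_a$. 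Iterating one bush layer at a time decomposes any terminal extension into one-step terminal extensions $S \TermExt_s B$. The canonical decomposition of an arbitrary extension $S \subs T$ into a non-terminal followed by a terminal one is produced exactly as in Proposition~\ref{thm:extension_decomposition}, by taking $S'$ to be the lower subset of $T$ generated by $S$ and enlarging to $S''$ via the $T$-immediate successors needed to witness the full splitting degree at each node of $S' \setminus S$.

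The main work goes into the structural result for non-terminal extensions. Given a non-terminal $S \subs T$, I would set $N := \setof{t \in T \setminus S}{\text{some $S$-node lies above $t$}}$, $N' := (T \setminus S) \setminus N$, and for $t \in N$ let $s(t)$ be the least $S$-node above $t$ (well defined since $S$ is meet-closed). As in the leveled case, every $t \in N'$ is terminal in $T$ and its $T$-immediate predecessor lies in $N$. For each $s \in S$ set $B_s := \setof{t \in N}{s(t) = s}$. The key observation replacing the level-based argument is that $B_s$ is a chain in $T$, simply because $B_s \subs (\below, s)_T$ and the latter is linearly ordered. Enumerating $B_s = \set{t_{s, 1} < \cdots < t_{s, k_s}}$, a short argument using the minimality in the definition of $s(\argum)$ shows that $t_{s, i+1}$ is the $T$-immediate successor of $t_{s, i}$ on the path to $s$, and that $s$ is the $T$-immediate successor of $t_{s, k_s}$. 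Consequently the bush $B(t_{s, i})$ of $T$-immediate successors of $t_{s, i}$ contains $t_{s, i+1}$ (or $s$ for $i = k_s$) as one top-level node, with the remaining top-level nodes belonging to $N'$. Therefore $C_s := \bigcup_i B(t_{s, i})$ is a pointed bush-column with designated top $s$, and with $A := \setof{s \in S}{B_s \ne \emptyset}$ we obtain the canonical decomposition $T = S \NTermExt_{s \in A} C_s^s$. Splitting each bush-column into its constituent bushes and performing the surgeries one bush at a time realises this as a finite composition of one-step non-terminal extensions of the form $S' \NTermExt_{s'} B^b$.

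The hard part is confirming that each fibre $B_s$ really assembles into a bush-column; this is precisely where the leveless setting diverges from the leveled one, since we now permit bush-columns of different heights attached at different nodes of $S$, and we cannot appeal to the automatic ``level-closure'' of strong subtrees to constrain the shape. The saving observation is that in a tree the downset $(\below, s]_T$ is linearly ordered, which forces $B_s$ to be a chain and makes the $t_{s, i}$-enumeration automatic. Once the chain property of $B_s$ and the minimality lemma for $s(\argum)$ are in place, canonicity of the decomposition follows directly from the canonical definitions of $s(t)$, $N$, $N'$, and $B_s$, and uniqueness of the non-terminal/terminal factorisation then mirrors Proposition~\ref{thm:extension_decomposition} verbatim.
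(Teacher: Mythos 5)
Your proposal is correct and follows exactly the route the paper intends: the Observation is stated without proof precisely because the arguments of Propositions~\ref{thm:terminal_extension}, \ref{thm:extension_decomposition}, and \ref{thm:nonterminal_extension} transfer, and you carry out that transfer faithfully. You also correctly isolate the one point where the leveled argument genuinely needs replacing — namely that the fibre $B_s$ of new nodes with $s(t)=s$ is automatically a chain because $(\below, s)_T$ is linearly ordered, which substitutes for the level-by-level bookkeeping and yields the bush-column structure $C_s = \bigcup_i B(t_{s,i})$.
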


\begin{tw} \label{thm:leveless_generic}
	$\LU$ is the unique (up to $\LexSplTree$-isomorphism) countable lexicographic tree such that every branch is isomorphic to $(\Qyu, \leq)$,
	and for every $s < t \in \LU$ and $(k, m) \in M^<$ there is $u \in (s, t)$ such that $\spl(u, t) = (k, m)$.
	
	\begin{proof}
		Clearly, by Theorem~\ref{thm:U_M}, $\LU$ satisfies the characterizing conditions.
		For the converse implication we may use Fraïssé theory and prove that a countable lexicographic tree $U$ satisfying the conditions is injective with respect to $\LTa$.
		Let $S \subs U$ be a decided lexicographic subtree.
		By the previous observation it is enough to consider one-step extensions.
		Suppose $T = S \TermExt_s B$ for an $S$-terminal node $s$ and a bush $B$.
		Since $S$ is decided, we have $m := \spl_B(s) = \dspl_S(s) = \spl_U(s)$.
		The last equality uses the assumption that $s \in U$ is not terminal.
		It is enough to define the extension $f\maps T \to \LU$ extending $\id{S}$ by putting $f(b_i) \in C_i$ for every $i < k$ where $(b_i)_{i < m}$ is the $\leq\lex$-increasing enumeration of the top level of $B$ and $(C_i)_{i < m}$ is the $\leq\lex$-increasing enumeration of $\Spl_U(s)$.
		Next, suppose $T = S \NTermExt_s B^s$ for a node $s \in S$ and a pointed bush $B^s$.
		Let $r$ be the root of $B$, let $(b_i)_{i < m}$ be the $\leq\lex$-increasing enumeration of the top level of $B$, and let $k < m$ be the index with $b_k = s$.
		By the assumption that $U$ has no root and by the characterizing property, there is a node $t < s$ with $\spl_U(t, s) = (k, m)$.
		Hence, we may take a $\leq\lex$-increasing enumeration $(C_i)_{i < m}$ of $\Spl_U(t)$ with $s \in C_k$.
		Putting $f(b_i) \in C_i$ for $i < m$ with $f(b_k) = f(s) = s$ defines a desired extension $f\maps T \to U$ of $\id{S}$.
		
		Alternatively, we could show that a countable lexicographic tree $U$ satisfying the conditions admits a level structure such that the expansion satisfies the conditions in Theorem~\ref{thm:U_M}.
		The idea is to fix a family $\set{B_n}_{n \in \nat}$ of branches covering $U$, let $B_0$ correspond to the set of levels, and construct suitable order-isomorphisms $B_n \to B_0$.
		We have $B_0 \cap B_1 = (\below, b]$ for some $b \in U$.
		We choose an order-isomorphism $g\maps (b, \above)_{B_n} \to (b, {\above})_{B_0}$ such that every combination $(m, m') \in M^2$ appears as the value of $(\spl_U(t), \spl_U(g(t))$ for densely many nodes $t \in (b, \above)$.
		And we continue similarly with other branches $B_n$.
	\end{proof}
\end{tw}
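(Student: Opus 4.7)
The plan is to invoke the characterization of weak Fraïssé limits in Theorem~\ref{thm:Flim}: I would show that any countable lexicographic tree $U$ satisfying the two listed properties is cofinal and weakly injective in $(\LTa,\sigma\LTa)$. Since $\LU$ is the weak Fraïssé limit of $(\LTa,\sigma\LTa)$ and limits are unique up to isomorphism, this gives the uniqueness. That $\LU$ itself satisfies the conditions follows from Theorem~\ref{thm:U_M}: branches of $U_M$ are isomorphic to $\Qyu_M$ by condition (T3) and hence to $\Qyu$ as linear orders, while the splitting-density requirement is condition (T2) applied to a singleton $F=\sn{t}$. Forgetting the level structure leaves both properties intact.

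For the converse, suppose $U$ is a countable lexicographic tree with the stated properties. The branch hypothesis forces $U$ to have no root and no terminal nodes, so the deciding relations $R_m$ at terminal nodes of a finite strong subtree agree automatically with the actual $U$-splitting. Local finiteness of $U$ with respect to $\LTa$ is immediate: close any finite $F\subs U$ under meets, then for every non-terminal closure node $s$ add one representative of each class in $\Spl_U(s)$ not already witnessed; the resulting set carries an induced finite $\LTa$-structure. Hence $U\in\ob{\sigma\LTa}$, and cofinality of $U$ in $(\LTa,\sigma\LTa)$ will follow once weak injectivity is established, since the empty tree is initial in $\LTa$.

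Weak injectivity reduces to extending $\id{S}$ along an arbitrary one-step $\LTa$-extension $S\subs T$, and decomposes into a terminal and a non-terminal case in parallel with the proof of Theorem~\ref{thm:U_M}. In the terminal case $T=S\TermExt_s B$, the decided splitting $\dspl_S(s)=m$ coincides with $\spl_U(s)$; lex-enumerating $\Spl_U(s)$ as $(C_i)_{i<m}$ and the top of $B$ as $(b_i)_{i<m}$, the splitting-density condition applied inside each $C_i$ (picking any pair $s<t\in C_i$ and extracting a node of splitting degree $\dspl_B(b_i)$ between them) furnishes the values $f(b_i)$. In the non-terminal case $T=S\NTermExt_s B^s$ with $s$ the $k$-th of $m$ top-level nodes of $B$, I would first choose $s^*\in U$ strictly below $s$ but above every meet $s\meet x$ with $x\in S\setminus\sn{s}$ (possible because $U$ has no root and $S$ is finite), apply the splitting-density condition to $(s^*,s)$ and $(k,m)\in M^<$ to obtain $u\in(s^*,s)$ with $\spl_U(u,s)=(k,m)$, set $f(r):=u$ for the bush root $r$, and fill in $f(b_i)$ for $i\ne k$ exactly as in the terminal case applied to the lex-enumerated classes of $\Spl_U(u)$.

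The main obstacle I anticipate is the bookkeeping in the non-terminal case: one must verify that inserting $u$ above all meets $s\meet x$ actually preserves the meet, splitting, and lexicographic structure globally and not just locally around $s$. The choice of $s^*$ ensures that $r\meet x$ computed in $T$ matches $u\meet x$ computed in $U$ for every $x\in S$, and the lex order is respected thanks to the chosen lex-enumeration of $\Spl_U(u)$. Once these compatibilities are checked, the argument is complete. Alternatively, one could construct a level structure on $U$ directly—fixing a countable family of branches $(B_n)_{n\in\nat}$ covering $U$, designating $B_0$ as the level set, and inductively building order isomorphisms $B_n\cap(B_0\cap B_n,\above)\to B_0\cap(B_0\cap B_n,\above)$ distributing splitting-degree pairs generically—and then appeal directly to Theorem~\ref{thm:U_M}.
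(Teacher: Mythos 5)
Your proposal is correct and follows essentially the same route as the paper: verify the conditions for $\LU$ via Theorem~\ref{thm:U_M}, and for uniqueness prove injectivity of $U$ with respect to $\LTa$ by extending along one-step terminal and non-terminal extensions, with the level-structure construction mentioned as an alternative. Your treatment is in fact slightly more explicit on two routine details the paper glosses over (choosing the new bush root above all meets $s \meet x$, and matching the decided splitting degrees of the top-level bush nodes), but the argument is the same.
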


We now begin our discussion on dendrites and their connection to the generic leveless tree $\LU$. 
Recall that a \emph{dendrite} (see e.g. \cite[\S X]{Nadler}) is a locally connected metrizable continuum $D$ such that every two points $x, y \in D$ are connected by a unique arc denoted by $[x, y]$ or equivalently there are no (non-trivial) closed curves in $D$.
Also for every three points $x, y, z \in D$ there is a unique point in the intersection $[x, y] \cap [y, z] \cap [z, x]$ denoted by $\meet(x, y, z)$ and called a \emph{ternary meet} or a \emph{median}.

Every point $x \in D$ has the \emph{order} $\ord(x) \in \set{1, 2, 3, \ldots, \omega}$ equal to the number of connected components of $D \setminus \set{x}$.
Points of order $1$ are called \emph{end points}, points of order $2$ are called \emph{ordinary points}, and points of order $\geq 3$ are called \emph{branch points}.
We denote the set of all branch points, which is countable, by $\Br(D)$.

Finally, for every $M \subs \set{3, 4, 5, \ldots, \omega}$ there is a unique (up to homeomorphism) dendrite $\WD{M}$ called \emph{(generalized) Ważewski dendrite} such that the order of every branch point is in $M$ and for every $m \in M$ and every arc $A \subs \WD{M}$ the set of branch points of order $m$ is dense in $A$.
Generalized Ważewski dendrites were introduced in \cite[\S 6]{CharatonikDilks}.
The universal Ważewski dendrite $\WD{\set{\omega}}$ originates from \cite{Wazewski}.

\begin{con}[Rooting a dendrite]
	Let $D$ be a dendrite and let $r \in D$.
	There is a natural way to turn $D$ into a tree by rooting it at $r$.
	We define the tree order by $x \leq y \letiff [r, x] \subs [r, y]$.
	This is a well-defined order that has meets: $x \meet y = \meet(r, x, y)$.
	Clearly, $[r, z] \subs [r, x] \cap [r, y]$ if and only if $[r, z] \subs [r, \meet(r, x, y)]$.
	
	Moreover, for every $x \in D$, $\Spl(x)$ consists of the connected components of $D \setminus \set{x}$ omitting the one containing $r$ unless $x = r$, and hence $\spl(x) = \ord(x) - 1$ if $x \neq r$, and $\spl(r) = \ord(r)$.
	This is because the components $C$ of $D \setminus \set{x}$ as well as the corresponding sets $C \cup \set{x}$ are arcwise connected, so for $y \in C$ and $z \in C'$, the arc $[y, z]$ goes through $x$ if $C \neq C'$, and stays in $C$ if $C = C'$.
	Hence, $y < x$ for $y \in C$ if $C$ is the component containing $r$, and $y \meet z = x$ if $y \in C$ and $z \in C'$ and $C \neq C'$ are components not containing $r$, and $y \meet z > x$ for $y, z \in C \notowns r$.
\end{con}

\begin{con}
	For a nonempty set $M \subs \Nat^+$ we build a $\LexSplTree$-isomorphic copy of $\LU$ from the Ważewski dendrite $\WD{M'}$ where $M' = \set{m + 1: 1 \neq m \in M} \subs \set{3, 4, 5, \ldots}$.
	
	We pick an end point $r \in \WD{M'}$ and root the dendrite at $r$ according to the previous construction.
	Note that in the resulting tree every branch has the order type of the closed real interval $[0, 1]$ and contains densely many branch points of every order from $M'$ as well as densely many ordinary points (there are only countably many branch point in a dendrite).
	Also note that the end points of the dendrite correspond to maxima of the tree and to the root.
	
	Let $T \subs \WD{M'}$ be the countable subset consisting of all branch points and, if $1 \in M$, also of an ordinary point $z \in (x, y)$ for every $x < y \in \Br(\WD{M'})$.
	For every $x, y \in T$ we have $x \meet y = \meet(r, x, y)$, which is either one of $x, y$ or a branch point.
	Since $T$ contains all branch points, it follows that $T$ is closed under meets.
	Since also the branch points are dense in every arc of the dendrite, we have that $\spl_T(t) = \spl_{\WD{M'}}(t)$ for every $t \in T$.
	Hence, the inclusion $T \subs \WD{M'}$ is a $\SplTree$-embedding.
	Moreover, every branch in $T$ is ordered like the rationals since $T$ contains no endpoints, and for every $x < y \in T$ and $m \in M$ there is $z \in (x, y)_T$ with $\spl_T(z) = m$.
	
	To construct the desired $\LexSplTree$-isomorphic copy of $\LU$, we endow $T$ with a lexicographic order such that the resulting tree satisfies the characterizing condition of Theorem~\ref{thm:leveless_generic}.
	To define a compatible lexicographic order it is enough to fix a linear order on every $\Spl_T(x)$ for $x \in T$, but to ensure the condition we choose a point $w_{x, y, k, m} \in (x, y)_T$ of splitting degree $m$ for every $x < y \in T$ and $(k, m) \in M^<$ such that the map $(x, y, k, m) \mapsto w_{x, y, k, m}$ is one-to-one, and we make sure that $y \in C_k$ where $(C_i)_{i < m}$ is the $\leq\lex$-increasing enumeration of $\Spl_T(w_{x, y, k, m})$.
\end{con}

\aseparator

Finite rooted subtrees obtained from the branch points of dendrites have been extensively studied, often to the great success of computing the universal minimal flow of the Ważewski dendrites, see Duchesne~\cite{Duchesne} and Kwiatkowska~\cite{Kwiatkowska}.
In fact, a rephrasing of a result by Kwiatkowska gives us the Ramsey property of $\LTa$.

\begin{tw}
	For every nonempty $M \subs \Nat^+$ the category $\LTa$ has the Ramsey property.
	Hence, $\LTw$ has the weak Ramsey property and $\aut(\LU)$ is extremely amenable.
	
	\begin{proof}
		The second part follows from our general theory.
		The first part is a reformulation of \cite[Theorem~3.6]{Kwiatkowska}.
		Namely, let $\LTa'$ be the modified version of the category $\LTa$ where the predetermined splitting degree $\dspl$ may be strictly greater (but still from $M$) than the actual splitting degree $\spl$.
		The $\LTa'$-arrows preserve the relations $R_m$, $m \in M$, that encode $\dspl$, so it is the predetermined splitting degree rather than the actual one that is being preserved.
		Nevertheless, $\LTa \subs \LTa'$ is a full cofinal subcategory since we may simply add the missing immediate successors to any non-terminal node $s$ such that $\spl(s) < \dspl(s)$.
		Hence, $\LTa$ has the Ramsey property if and only if $\LTa'$ has the Ramsey property.
		
		The statement of \cite[Theorem~3.6]{Kwiatkowska} is that a certain category $\mathcal{T}_P^*$ has the Ramsey property.
		We argue that $\LTa'$ is equivalent to $\mathcal{T}_P^*$ for $P = M + 1$.
		The objects of $\mathcal{T}_P^*$ are finite trees, but the unordered graph-theoretic tree structure is encoded by a quaternary relation $D(a, b, c, d)$ holding if the finite paths from $a$ to $b$ and from $c$ to $d$ do not intersect, and the tree order is encoded by a ternary relation $C(a, b, c)$ defined by $D(a, b, c, r)$ where $r$ is a fixed root.
		By \cite[Proposition~3.3]{Kwiatkowska}, a map between trees preserves the relations $C$ and $D$ if and only if it preserves the tree order and meets.
		The trees are labeled by unary relations $K_p$, $p \in P$, giving an upper bound for a splitting degree and directly corresponding to our relations $R_m$ where $p = m + 1$.
		Finally, the lexicographic order is encoded by binary relations $G_i(a, b)$, $i \in \Nat^+$: each node $a$ with a predetermined splitting degree $m$ has “slots” for immediate successors indexed by $i \in \set{1, \ldots, m}$, and $G_i(a, b)$ holds if $b$ is an actual immediate successor of $a$ occupying the $i$th slot.
		
		Note that the statement of \cite[Theorem~3.6]{Kwiatkowska} does not cover the cases when $1 \in M$ (though it allows infinite bound on a splitting degree), however, the proof could be rewritten using our language to directly show that $\LTa$ has the Ramsey property for any $M$.
	\end{proof}
\end{tw}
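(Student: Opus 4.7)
My plan is to dispatch the two implications in the statement and then concentrate on the substantive claim, the Ramsey property of $\LTa$. Once $\LTa$ is shown to be Ramsey, the weak Ramsey property of $\LTw$ follows from Proposition~\ref{thm:amalgamation_extension}(iii): the bookkeeping used in Theorem~\ref{thm:tree_amalgamation} adapts verbatim to the leveless setting, yielding $\am(\LTc)=\LTa$ when $\card{M}\goe 2$ (the case $M=\set{m}$ is trivial since $\LTa\to\LTw$ is then an isomorphism of categories and amalgamation is unconditional). Extreme amenability of $\aut(\LU)$ then drops out of Theorem~\ref{thm:KPT} applied to any weak Fraïssé matching sequence in $(\LTw,\sigma\LTc)$ with limit $\LU$.

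To establish the Ramsey property of $\LTa$, I would first loosen it to an auxiliary category $\LTa'$ in which, at a terminal node $s$, the relation $R_m$ specifies only an upper bound on $\spl(s)$ (still required to lie in $M$), while morphisms are required to preserve the relations $R_m$ (and hence preserve the predetermined degree $\dspl$). Then $\LTa$ sits in $\LTa'$ as a full cofinal subcategory: for any $\LTa'$-tree, freely plant $\dspl(s)-\spl(s)$ fresh immediate successors at every terminal node whose predetermined degree strictly exceeds its actual degree, obtaining an $\LTa'$-embedding into an $\LTa$-object. By Corollary~\ref{equiRamsey} the Ramsey property transfers between the two, so it suffices to prove it for $\LTa'$.

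The heart of the argument is to match $\LTa'$, viewed as a concrete category of first-order structures, against Kwiatkowska's category $\mathcal{T}_P^*$ from \cite[Theorem~3.6]{Kwiatkowska}, taking $P=\set{m+1:m\in M}$. I would spell out the dictionary explicitly. The quaternary relation $D(a,b,c,d)$ (``the paths $a$--$b$ and $c$--$d$ are disjoint'') together with the designated root $r$ yields Kwiatkowska's ternary betweenness $C(a,b,c):=D(a,b,c,r)$, and by \cite[Proposition~3.3]{Kwiatkowska} the pair $(C,D)$ is interdefinable with the tree order $\leq$ and the meet operation $\meet$. The unary labels $K_p$ correspond to the $R_m$ via $p=m+1$, the shift reflecting the convention that in the unrooted dendrite picture the predecessor of a branch point contributes to its order. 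The binary slot-relations $G_i(a,b)$ (``$b$ is the $i$-th immediate successor of $a$'') encode precisely the restriction of the lexicographic order to $\Spl(a)$. Once this dictionary is verified, Kwiatkowska's theorem transfers directly to give the Ramsey property of $\LTa'$.

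The main obstacle I anticipate is that \cite[Theorem~3.6]{Kwiatkowska} is stated for $P\subs\set{3,4,\ldots,\omega}$, so the case $1\in M$ (equivalently $2\in P$) is not literally covered. I foresee two routes to patch this. The cleaner one is to reinspect Kwiatkowska's Milliken-style induction on levels, which fuses the classical finite Ramsey theorem for chains with colourings of the possible bushes; allowing degree-$2$ branch points changes nothing, since a single-slot bush is handled in the same inductive step as a multi-slot one. Alternatively, one can argue case-wise: if $M=\set{1}$, then $\LTa$ is (equivalent to) the category of finite lexicographic chains and the result reduces to the classical finite Ramsey theorem; in mixed cases, pass to a cofinal subcategory in which degree-$1$ segments are contracted to a single node, which lands inside the range of \cite[Theorem~3.6]{Kwiatkowska} directly. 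I would adopt the first route in the write-up and flag the second as a fallback.
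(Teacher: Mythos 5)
Your proposal is correct and follows essentially the same route as the paper: pass to the auxiliary category $\LTa'$ with possibly-undersatisfied predetermined splitting degrees, observe that $\LTa$ is full and cofinal in it so the Ramsey property transfers (Corollary~\ref{equiRamsey}), identify $\LTa'$ with Kwiatkowska's $\mathcal{T}_P^*$ for $P = M+1$ via the same dictionary ($D$, $C$, $K_p$, $G_i$), and handle $1 \in M$ by noting her Milliken-style argument goes through unchanged. Your extra detail on the $1\in M$ patch and on deducing the second part from Proposition~\ref{thm:amalgamation_extension} and Theorem~\ref{thm:KPT} only makes explicit what the paper leaves to ``our general theory.''
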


The following question remains open as the level structure poses a new challenge. The above however, seems like a step forward to proving this fact and deducing another structural variant of the classic Milliken's theorem. 

\begin{question}
	Does the category $\Tw$ have the weak Ramsey property?
	Equivalently, does $\Ta$ have the Ramsey property?
	Equivalently, is $\aut(\U)$ extremely amenable?
\end{question}


\begin{thebibliography}{99}
  
	\bibitem{AdamRos} {\sc J.~Adámek, J.~Rosický},
	{\it Locally presentable and accessible categories},
	vol.~189 of London Mathematical Society Lecture Note Series,
	Cambridge University Press, Cambridge, 1994.
	
	\bibitem{ArhangelskiiTkachenko} {\sc A.~Arhangel’skii, M.~Tkachenko},
	{\it Topological groups and related structures},
	Atlantis Studies in Mathematics 1. Hackensack, NJ: World Scientific; Paris: Atlantis Press, 2008.
	
	\bibitem{CharatonikDilks} {\sc W.~Charatonik, A.~Dilks},
	{\it On self-homeomorphic spaces},
	Topology Appl. 55 (1994), no.~3, 215--238.
	
	\bibitem{DiLiberti} {\sc I.~Di Liberti},
	{\it Weak saturation and weak amalgamation property},
	J. Symb. Log. 84 (2019), no.~3, 929--936.

	\bibitem{Duchesne} {\sc B. Duchesne},
	{\it Topological properties of Ważewski dendrite groups},
	J. Éc. polytech. Math. 7 (2020), 431--477.
  
	\bibitem{Hodges} {\sc W. Hodges},
	{\it Model theory},
	Encyclopedia of Mathematics and Its Applications Series Profile 42,
	Cambridge University Press, Cambridge, 1993.
	
	\bibitem{Ivanov} {\sc A.A. Ivanov},
	{\it Generic expansions of $\omega$-categorical structures and semantics of generalized quantifiers},
	J. Symb. Log. 64 (1999), no. 2, 775--789.
	
	\bibitem{KPT} 
	{\sc A.S. Kechris, V.G. Pestov, S. Todorcevic},
	{\it \fra\ limits, Ramsey theory, and topological dynamics of automorphism groups},
	Geom. Funct. Anal. 15 (2005), no. 1, 106--189.
	
	\bibitem{KK} {\sc A.~Krawczyk, W.~Kubiś},
	{\it Games with finitely generated structures},
	Ann. Pure Appl. Logic 172 (2021), no. 10, Paper No.~103016.

	\bibitem{WeakEx}
	{\sc A. Krawczyk, A. Kruckman, W. Kubiś, A. Panagiotopoulos},
	{\it Examples of weak amalgamation classes},
	MLQ Math. Log. Q. 68 (2022), no.2, 178--188.
	
	\bibitem{Kruckman} {\sc A.~Kruckman},
	{\it Infinitary Limits of Finite Structures},
	Thesis (Ph.D.) -- University of California, Berkeley,
	ProQuest LLC, Ann Arbor, MI, 2016.
	
	\bibitem{KweakFra} {\sc W. Kubi\'s}, 
	{\it Weak \fra\ categories},
	Theory Appl. Categ. 38 (2022), 27--63.
	
	\bibitem{K40} {\sc W. Kubi\'s},
	{\it \fra\ sequences: Category-theoretic approach to universal homogeneous structures},
	Ann. Pure Appl. Logic 165 (2014), 1755--1811.
	
	\bibitem{KechrisRosendal} {\sc A.S. Kechris, C. Rosendal},
	{\it Turbulence, amalgamation, and generic automorphisms of homogeneous structures},
	Proc. Lond. Math. Soc. (3) 94 (2007), no. 2, 302--350. 

	\bibitem{Kwiatkowska} {\sc A. Kwiatkowska},
	{\it Universal minimal flows of generalized Wazewski dendrites}, 
	J.~Symb. Log. 83 (2018), 1618--1632.
	
	\bibitem{MacLane} {\sc S.~MacLane},
	{\it Categories for the working mathematician},
	second edition,
	Grad. Texts in Math., Vol.~5,
	Springer-Verlag, New York, 1998.

	\bibitem{Malicki} {\sc M.~Malicki},
	{\it Remarks on weak amalgamation and large conjugacy classes in non-archimedean groups},
	Arch. Math. Logic 61 (2022), no. 5--6, 685--704.

	\bibitem{MasKPT} {\sc D. Ma\v sulovi\'c}, 
	{\it The Kechris--Pestov--Todor\v cevi\'c correspondence from the point of view of category theory}, 
	Appl. Categ. Structures 29 (2021), no. 1, 141--169.


	\bibitem{MilTrees} {\sc K.R. Milliken},	{\it A Ramsey theorem for trees}, J. Combin. Theory Ser. A 26 (1979), 215--237.
	
	\bibitem{Nadler} {\sc S.B. Nadler, Jr.},
	{\it Continuum theory, An introduction},
	Monogr. Textbooks Pure Appl. Math., 158,
	Marcel Dekker, Inc., New York, 1992.
	
	\bibitem{Nesetril} {\sc J. Nešetřil},
	{\it Ramsey classes and homogeneous structures},
	Comb. Probab. Comput. 14 (2005), 171--189.
	
	
	\bibitem{Pabion} {\sc J.-F. Pabion},
	{\it Relations préhomogènes},
	C. R. Acad. Sci. Paris Sér. A-B 274 (1972), A529--A531.
	
	\bibitem{PT22} {\sc A.~Panagiotopoulos, K.~Tent},
	{\it Universality vs genericity and $C_4$-free graphs},
	European J. Combin. 106 (2022), Paper No.~103590.
	
	\bibitem{Pestov} {\sc V. Pestov},
	{\it Dynamics of infinite-dimensional groups},
	vol.~40 of University Lecture Series,
	American Mathematical Society, Providence, RI, 2006.
	
	
	\bibitem{Wazewski} {\sc T. Ważewski},
	{\it Sur les courbes de Jordan ne renfermant aucune courbe simple fermée de Jordan},
	Ann. de la soc. polon. de math. 2 (1924), 49--170. 

\end{thebibliography}
\end{document}